\documentclass[11pt]{amsart}
\usepackage{amsmath, amsthm, amssymb}
\usepackage{verbatim}
\usepackage[colorlinks=true,citecolor=blue]{hyperref}

\renewcommand{\d}{\partial}
\newcommand{\dbar}{\overline{\partial}}
\newcommand{\ddbar}{\sqrt{-1}\d\overline{\d}}

\newcommand{\oep}{\omega_\ep}

\newtheorem{thm}{Theorem}
\newtheorem{prop}[thm]{Proposition}
\newtheorem{lem}[thm]{Lemma}

\newtheorem{conj}[thm]{Conjecture}

\theoremstyle{definition}

\newtheorem{remark}[thm]{Remark}

\renewcommand{\[}{\begin{equation}}
\renewcommand{\]}{\end{equation}}

\newcommand{\al}{\alpha}
\newcommand{\bal}{\bar\alpha}
\newcommand{\be}{\beta}
\newcommand{\bbe}{\bar\beta}
\newcommand{\ga}{\gamma}
\newcommand{\la}{\lambda}

\newcommand{\ep}{\epsilon}

\newcommand{\Ga}{\Gamma}

\newcommand{\vep}{\varepsilon}


\newcommand{\ZZ}{\mathbb{Z}}

\newcommand{\RR}{\mathbb{R}}
\newcommand{\CC}{\mathbb{C}}
\newcommand{\PP}{\mathbb{P}}

\newcommand{\s}{\mathbf{s}}
\newcommand{\sL}{\mathcal{L}}
\newcommand{\sD}{\mathcal{D}}
\newcommand{\bM}{Bl_pM}
\newcommand{\bC}{Bl_0\mathbb{C}}
\newcommand{\sO}{\mathcal{O}}
\newcommand{\bl}{\mathbf{l}}
\newcommand{\sF}{\mathcal{F}}
\newcommand{\sN}{\mathcal{N}}
\newcommand{\sU}{\mathcal{U}}


\title[Blowing up constant scalar curvature K\"ahler surfaces]{Expansions of solutions to extremal metric type equations on blow-ups of cscK surfaces}
\author[V. V. Datar]{Ved V. Datar}
\address{Department of Mathematics, UC Berkeley}
\email{vvdatar@berkeley.edu}
\thanks{Research supported by NSF RTG grant DMS-1344991}

\begin{document}

\begin{abstract} The aim of this article is to study expansions of solutions to an extremal metric type equation on the blow-up of constant scalar curvature K\"ahler surfaces. This is related to finding constant scalar curvature K\"ahler (cscK) metrics on $K$-stable blow-ups of extremal K\"ahler surfaces \cite{Sz12}. \end{abstract}

\maketitle

\section{Introduction}

A K\"ahler manifold $(M,\omega)$ is said to be {\em extremal} if the scalar curvature $\s_\omega$ is the Hamiltonian of a holomorphic vector field. Constant scalar curvature K\"ahler metrics are a particularly interesting subclass of extremal metrics. A basic problem in K\"ahler geometry is to study conditions which guarantee the existence of cscK or extremal metrics. The Yau-Tian-Donaldson conjecture \cite{Yau93, Ti97, Don02} relates existence of cscK metrics to an algebro-geometric notion of stability. Although this has been confirmed in the case of K\"ahler-Einstein metrics on Fano manifolds \cite{CDS1, CDS2, CDS3, Ti15}, the conjecture  in full generality is still wide open. In light of this, the problem of existence of extremal metrics on the blow-ups of extremal manfiolds has received considerable attention in recent years. 

The first major existence result was obtained by Arezzo and Pacard \cite{AP06}, who showed that if the automorphism group of an extremal manifold $(M,\omega)$ is discrete (in particular $\omega$ is cscK), then for any $p\in M$, the blow-up $\bM$ admits constant scalar curvature metrics in the K\"ahler class $L_\vep = \pi^*[\omega] - 2\pi\vep^2[E]$ for all small $\vep>0$. Here $E$ is the exceptional divisor and $\pi:\bM\rightarrow M$ is the blow-down map. In the presence of continuous automorphisms, sufficient conditions for existence of extremal metrics on $\bM$ were provided in \cite{AP09} and \cite{APS11}.

Making contact with the Yau-Tian-Donaldson conjecture, in \cite {Sz12,Sz15}, Sz\'ekelyhidi proved that if $m>2$, and $(\bM,L_\vep)$ is $K$-stable with respect to some special degenerations for all small $\vep$, then $\bM$ admits extremal metrics in the K\"ahler class $L_\vep$ for all sufficiently small $\vep$. The degenerations needed to test K-stability are precisely the ones generated by holomorphic vector fields on $M$. The key insight is to relate $K$-stability of the pair $(\bM,L_\vep)$ to finite dimensional GIT stability of the point $p$ with respect to the polarization $L + \delta K_M.$ This in turn is related to existence of extremal metrics via the following conjecture.
\begin{conj}\cite{Sz12}\label{Conj} Let $M$ admit a cscK metric $\omega$. 
\begin{itemize}
\item If $m>2$, there exist $\vep_0$ and $\delta_0$ such that if $p$ is GIT stable with respect to the polarization $L+\delta K_M$ for all $\delta<\delta_0$, then $(\bM_p,L_\vep)$ admits an extremal metric for all $\vep<\vep_0$.
\item If $m=2$, we can ask an analogous question with the polarization $L\pm \delta K_M$, where the sign is taken to be positive if $K_M\cdot L\geq 0$ and negative otherwise.
\end{itemize}
\end{conj}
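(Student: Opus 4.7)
The plan is to follow the Arezzo--Pacard--Sz\'ekelyhidi gluing strategy, adapted to the case where $(M,\omega)$ is cscK but automorphisms of $\bM$ may obstruct the existence of cscK (as opposed to extremal) metrics in $L_\vep$. First, using $\omega$-normal coordinates at $p$, I would excise a ball of radius comparable to a small power of $\vep$ and glue in a rescaled Burns--Simanca metric on $Bl_0\CC^m$, which is scalar-flat K\"ahler and asymptotic to Euclidean; a cutoff at the level of K\"ahler potentials produces a family $\omega_\vep \in L_\vep$ whose scalar curvature is small in appropriate weighted norms.

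Next I would carry out the standard linear analysis: the linearization of the scalar curvature map at $\omega_\vep$ is (up to lower-order terms) the Lichnerowicz operator $\sL_{\omega_\vep} = (\dbar\nabla)^*(\dbar\nabla)$, whose $L^2$-kernel is the space $\mathfrak{h}$ of real holomorphy potentials on $M$. Working in weighted H\"older spaces $C^{4,\alpha}_\delta$ with weight between the indicial roots of $\sL$ at the gluing scale, I would establish a right-inverse estimate modulo $\mathfrak{h}$ with controlled blow-up in $\vep$. A Lyapunov--Schmidt reduction then produces $\vp_\vep$ solving the extremal equation modulo a finite-dimensional obstruction $F_\vep : \mathfrak{h} \to \mathfrak{h}^*$, whose vanishing is equivalent to the existence of a genuine extremal metric in $L_\vep$.

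The crux is then to expand $F_\vep$ asymptotically and identify its leading term, for a carefully chosen $\delta = \delta(\vep)$, with the moment map for the action of the stabilizer of $p$ in $\mathrm{Aut}(M,\omega)$ associated to the polarization $L+\delta K_M$ (respectively $L \pm \delta K_M$ in the $m=2$ case). Once this identification is in place, the GIT-stability hypothesis of the conjecture translates into properness of the corresponding finite-dimensional functional on $\mathfrak{h}$, forcing $F_\vep$ to have a zero for all small $\vep$, which unwinds to a genuine extremal metric in $L_\vep$.

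The main obstacle lives precisely in this last step, and it is where the dichotomy $m>2$ versus $m=2$ enters. In dimension $m>2$ the Burns metric decays polynomially and $F_\vep$ admits a clean expansion with well-separated powers $\vep^{2m-2}$ and $\vep^{2m}$, so matching with $L+\delta K_M$ is fairly direct. In dimension $m=2$, however, the Green's function on $Bl_0\CC^2$ decays only logarithmically and $F_\vep$ develops competing $\vep^2$ and $\vep^2\log\vep$ contributions whose balance depends on the sign of $K_M\cdot L$---precisely the dichotomy $L\pm \delta K_M$ in the statement. Producing sharp asymptotic expansions of solutions to the extremal-type equation on $\bM$ to the order required to separate these terms, which is the subject of the present article, is the prerequisite step that must be completed before the GIT-matching argument can be pushed through.
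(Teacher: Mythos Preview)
The statement you are addressing is a \emph{conjecture}, and the paper does not prove it; it says explicitly that the $m=2$ case remains open. There is therefore no proof in the paper to compare against. What the paper actually establishes is Theorem~\ref{main}, an asymptotic expansion that is a prerequisite for, but does not complete, the $m=2$ case of the conjecture.

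Your proposal is not a proof but a strategy sketch, and you effectively acknowledge as much in your final paragraph. The sketch is broadly correct for $m>2$ (it is the Sz\'ekelyhidi argument of \cite{Sz15}), but your account of the $m=2$ difficulty is inaccurate. You claim the obstacle is ``competing $\vep^2$ and $\vep^2\log\vep$ contributions whose balance depends on the sign of $K_M\cdot L$.'' The paper's main theorem shows no logarithmic terms arise: the expansion reads
\[
f = \s_\omega - 2\pi^2\vep^2\bigl(V^{-1}+\mu(p)\bigr) + \tfrac{\pi\s_\omega}{2}\,\vep^4\bigl(V^{-1}+\mu(p)\bigr) + O(\vep^\kappa),\qquad \kappa>4.
\]
The actual obstruction is that the $\Delta\mu(p)$ term---which for $m>2$ appears at order $\vep^{2m}$ and is precisely what matches the $K_M$-twist in the polarization---is \emph{absent} at order $\vep^4$ when $m=2$. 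The paper confirms this absence and computes the $\vep^4$ coefficient (this already requires a nontrivial new correction on $M\setminus\{p\}$, via Proposition~\ref{residue} and Lemma~\ref{modifying Green}), but the $\Delta\mu$ term is expected, when $K_M\cdot L\neq 0$, only at order $\vep^6$, which the paper does not reach. Without an expansion capturing $\Delta\mu$, the finite-dimensional GIT-matching step you describe cannot be executed, and the conjecture in dimension two remains open.
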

While Sz\'ekelyhidi solved this conjecture in \cite{Sz15} for $m>2$, it is still open in dimension two. To describe his approach and to motivate our main theorem, we need to introduce some notation. Let $G$ be the compact group of holomorphic Hamiltonian isometries of $(M,\omega)$. The Lie algebra of $\mathfrak{g}$ corresponds to holomorphic Killing fields with zeroes \cite{LS94}. We normalize the moment map by $\mu:M\rightarrow \mathfrak{g}^*,$ so that $$\int_M\langle\mu,\xi\rangle\frac{\omega^m}{m!} = 0$$for all $\xi \in \mathfrak{g}$. We can then identify $\mathfrak{g}^*$ with $\mathfrak{g}$ by using the inner product $$\langle\xi,\eta\rangle = \int_M\langle\mu,\xi\rangle\langle\mu,\eta\rangle\frac{\omega^m}{m!},$$ and think of $\mu$ as a map from $M$ into $\mathfrak{g}$. Identifying elements in $\mathfrak{g}$ with their Hamiltonians, we can furthermore think of $\mu(p)$ as a Hamiltonian function on $M$ for each $p$. Let $T\subset G$ be a torus fixing $p$ with Lie algebra $\mathfrak{t}$. Then $T$ can be identified with a compact subgroup of the automorphism group of $\bM$, and so it makes sense to search for extremal metrics invariant under the action of $T$. 

Now fix a family of metrics $\oep\in L_\vep$ on $\bM$.  If $f$ is the Hamiltonian of a vector field in $\mathfrak{t}$, we denote the Hamiltonian (with respect to $\oep$) of the lifted vector field by $\bl(f)$.
Then the metric $\Omega = \oep + \ddbar u$ is a $T$-invariant extremal metric if and only if $$\s(\Omega) = \bl_{\Omega}(f) := \bl(f) + \mathrm{Re}(\nabla \bl(f)\cdot \nabla u),$$ for some $f\in \mathfrak{t}$. It turns out that it is easier to solve the more general equation \eqref{main thm equation} below, allowing $f$ to lie instead in the centralizer $\mathfrak{h}$ of $\mathfrak{t}$. The problem of existence of an extremal metric then reduces to checking the finite dimensional condition of whether $f$ lies in $\mathfrak{t}$.  To exploit the hypothesis in the conjecture above, the strategy in \cite{Sz15} was to obtain an expansion of $f$. It is shown in \cite{Sz15}, that if $m>2$, then one can find a solution with $$f_{p} = \s_\omega + (c_1-c_2\vep^2)\vep^{2m-2}\mu(p) + c_3\vep^{2m}\Delta\mu(p) + O(\vep^\kappa)$$ for some $\kappa>2m$, and constants $c_1,c_2$ and $c_3$. The crucial term that makes the entire argument work is the $\Delta\mu$ term. From general considerations, it is expected that for $m=2$, there will be no $\Delta\mu$ term of order $O(\vep^4)$. Our main result confirms this, and we obtain an expansion of $f$ up to the fourth order. As we will discuss below, even in obtaining the $O(\vep^4)$ term there are new difficulties in dimension two that are absent in higher dimensions. It is expected, at least when $K_M\cdot L\neq 0$, that there will be a $\Delta\mu$ term of order $O(\vep^6)$. A global Newton-iteration type argument, such as the one used in \cite{MMP06}, might be more useful in obtaining higher order terms in the expansion.

To state our main result, let the space of Hamiltonian functions (including constants) of vector fields in $\mathfrak{h}$ be denoted by $\overline{\mathfrak{h}}$ and also let $C^{\infty}(\bM)^T$ denote the space of all smooth $T$-invariant functions.

\begin{thm}\label{main}
Let $(M,\omega)$ be a constant scalar curvature K\"ahler surface, and let $T\subset G$ be a non-trivial torus. With $\oep$ and $L_\vep$ as above, there exists an $\vep_0$ depending only $M$, $\omega$ and $T$ such that the following holds. For any $\vep\in (0,\vep_0)$ and $p\in M$ fixed by $T$, there exists a $u\in C^{\infty}(\bM)^T$, and an $f\in \overline{\mathfrak{h}}$ such that 
\begin{equation}\label{main thm equation}
\s(\oep+\ddbar u) = \bl_{\oep+\ddbar u}(f),
\end{equation} where $f$ has the expansion $$f = \s_\omega -2\pi^2\vep^2(V^{-1}+\mu(p)) + \frac{\pi\s_\omega}{2}\vep^4(V^{-1}+\mu(p))+ O(\vep^\kappa),$$for some $\kappa>4.$  Here $V$ denotes the volume of $(M,\omega)$, and the constant in $O(\vep^\kappa)$  depends only on $M$, $\omega$ and $T$.
\end{thm}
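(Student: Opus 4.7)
The plan is to follow the Arezzo--Pacard--Sz\'ekelyhidi gluing framework: first construct a well-matched approximate metric on $\bM$, then solve \eqref{main thm equation} by a perturbation argument adapted to dimension two, and finally extract the terms of the expansion of $f$ order by order in $\vep$. For the approximate metric $\oep\in L_\vep$, I would interpolate between $\pi^*\omega$ on the outer region and a rescaled Burns--Simanca metric $\vep^2\eta$ near the exceptional divisor $E$, glued via a cut-off on an annulus of radius $r\sim\vep^\alpha$ for some $\alpha\in(0,1)$. Here $\eta$ is the scalar-flat ALE metric on the blow-up of $\CC^2$ at the origin, whose potential has the form $|z|^2+\log|z|^2+O(|z|^{-2})$, and its log term is the source of the new difficulties in dimension two, since it must be matched against the local Taylor expansion of the potential of $\omega$ in normal coordinates at $p$.

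Writing the equation as
\[
F(u,f):=\s(\oep+\ddbar u)-\bl_{\oep+\ddbar u}(f)=0,
\]
the linearization in $u$ at $u=0$ is essentially the Lichnerowicz operator $\sL_{\oep}$, whose kernel on $(M,\omega)$ is exactly $\overline{\mathfrak{h}}$. I would work in weighted H\"older spaces $C^{k,\ga}_\delta$ adapted to the two-scale geometry of $\oep$, combining the Arezzo--Pacard analysis of the Lichnerowicz operator on the Burns--Simanca space (with its indicial roots and logarithmic obstructions) with the cscK structure on $(M,\omega)$ to produce a right inverse for $\sL_{\oep}$ modulo $\overline{\mathfrak{h}}$ with $\vep$-uniform bounds. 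Then $f\in\overline{\mathfrak{h}}$ plays the role of a finite-dimensional Lagrange multiplier absorbing the cokernel.

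To extract the expansion, the cscK hypothesis gives the leading term $\s_\omega$. The $O(\vep^2)$ correction arises from solving $\sL_\omega \psi = h$, where $h$ encodes the curvature defect produced by the blow-up: the pole of the Green's function of $\sL_\omega$ at $p$ contributes the $\mu(p)$ piece, while the volume renormalization produces the $V^{-1}$ piece, with the common factor $-2\pi^2$ fixed by the area of $E$ and the coefficient of the log term in $\eta$ (using that a vector field in $\mathfrak{h}$ lifts to $\bM$ with Hamiltonian whose value along $E$ records $\mu(p)$). For the $O(\vep^4)$ coefficient one iterates: substitute the first-order correction back into the equation and read off the next order. The analog of the $\Delta\mu$ term found in \cite{Sz15} at order $\vep^{2m}$ for $m>2$ would a priori appear here at $\vep^4$, but the heart of the computation is to show that in dimension two this term vanishes, leaving only a scalar-curvature--type correction $\tfrac{\pi\s_\omega}{2}(V^{-1}+\mu(p))$ coming from the interaction between the Burns--Simanca log term and the Ricci potential of $\omega$.

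A contraction mapping on the product of weighted potentials and $\overline{\mathfrak{h}}$, using the uniform bounds on the right inverse and quadratic estimates on the nonlinearity, then yields an exact solution $(u,f)$ with $f$ as stated up to an error $O(\vep^\ka)$ for some $\ka>4$. The main obstacle will be the $O(\vep^4)$ bookkeeping: the logarithmic terms peculiar to $m=2$ force a delicate matching across the gluing annulus, and one must simultaneously verify that the would-be $\Delta\mu$ contribution cancels and correctly identify the coefficient $\pi\s_\omega/2$ of the surviving term.
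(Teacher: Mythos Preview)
Your overall architecture matches the paper's: glue $\omega$ to a rescaled Burns--Simanca metric, work in weighted H\"older spaces, use a right inverse of the Lichnerowicz operator modulo $\overline{\mathfrak{h}}$, introduce the Green's function $G$ of $\sD_\omega^*\sD_\omega$ to absorb the log term, and finish with a contraction mapping. The $\vep^2$ coefficient is correctly identified via $h_1=-2\pi^2(V^{-1}+\mu(p))$ from the Green's equation.

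However, there is a genuine gap at the $O(\vep^4)$ step, and your diagnosis of the difficulty is off. The issue is not primarily ``matching across the gluing annulus'' nor a cancellation of a would-be $\Delta\mu$ term. The real obstruction is this: once you add $\vep^2 G$ to the potential on $M_p$, the nonlinear term $Q_\omega^{(2)}(\vep^2 G)$ is already of order $\vep^4$ (this is special to $m=2$), so to push the error below $\vep^4$ you must solve
\[
\sD_\omega^*\sD_\omega\Phi \;=\; Q_\omega^{(2)}(G)\;-\;\mathrm{Re}\langle\nabla h_1,\nabla G\rangle
\]
on $M_p$ in a suitable weighted space. This is a linear problem with cokernel: it is solvable only if the right-hand side is $L^2$-orthogonal to every $f\in\overline{\mathfrak{h}}$ with $f(p)=0$. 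The paper's central computation (an involved integration-by-parts with careful tracking of boundary residues on $\partial B_\rho$) shows that this pairing equals $4\pi^2\,\mathrm{Re}\bigl(b_{\bar\alpha}f_\alpha(p)\bigr)$, where $b_\alpha$ are the coefficients of the linear part of $G$ near $p$. In general this does \emph{not} vanish.

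This is exactly where the hypothesis that $T$ is a non-trivial torus enters, and your outline makes no use of it. Because $T$ fixes $p$ and acts linearly in normal coordinates, one can either argue that $G$ has no $T$-invariant linear part, or (when the $S^1$ acts trivially on one coordinate) subtract a suitable holomorphy potential in $\overline{\mathfrak{h}}$ from $G$ to kill its linear part. Only after this modification is the orthogonality condition satisfied and $\Phi$ exists. Without this step your iteration stalls at $O(\vep^4)$. The $\tfrac{\pi\s_\omega}{2}\vep^4$ term then arises not from a ``Ricci potential interaction'' but from an additional $-\vep^4\tfrac{\s_\omega}{4\pi}G$ correction inserted to make a certain global integral small enough for the second approximation step (which uses the mean-zero right inverse with uniform bounds).
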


 We refer the reader to section 2.3 for a definition of the lifting operator $\bl_\Omega$ with respect to a background metric $\Omega$ on $\bM$ and applied to a general $h\in \overline{\mathfrak{h}}$. 
 \begin{remark} If we instead assume that $\omega$ is only an extremal metric (with possibly non constant scalar curvature) and that $\nabla \s_\omega\in \mathfrak{t}$, the expansion should continue to hold with $\s_\omega$ replaced by $\s_\omega(p)$. 
 \end{remark}
 We end this section by outlining our method of proof.
\subsection{Overview of the proof} The general idea is that to obtain an expansion of $f$ correct to an order of $O(\vep^\kappa)$, one needs to construct an approximate solution that needs to be perturbed by only an $O(\vep^\kappa)$ amount to the actual solution. The first step is to construct the metrics $\oep$ by gluing $\omega$ to a rescaled copy of a scalar flat asymptotically Euclidean metric $\eta$ on $\bC^2$. The metric $\eta$ has a log-term which has to be cut-off when gluing to $\omega$. As in \cite{APS11, Sz12, Sz15}, a better approximate solution is constructed by introducing the Green's function $G$ of the Lichnerowicz operator with a log singularity at $p$. The new difficulty in dimension two is that the error incurred by introducing such a term is already of the order of $O(\vep^{2m})$, or $O(\vep^4)$. So we need to add an additional correction term. This is done by solving a linear problem on $M\setminus\{p\}$. The heart of the matter (Proposition \ref{residue} and Lemma \ref{modifying Green}) is then to check that the principle error term is orthogonal to the kernel of the adjoint, and so can be solved away. The rest of the estimates are then slight modifications of the estimates in \cite{Sz12} and \cite{Sz15}.  The non-triviality of $T$ only plays a role in adjusting the Green's function to have no linear terms locally. 
\subsection{Notations} $\Delta$ and $\nabla$ will denote respectively the {\em complex} Laplacian and the (1,0) part of the real gradient, with respect to either $\omega$ or $\oep$. All geometric quantities with respect to the Euclidean metric will be denoted by a subscript of $0$. For instance the Euclidean complex Laplacian is denoted by $\Delta_0$.
 
\section{Preliminaries}
\subsection{Extremal metrics} Extremal metrics on a K\"ahler manifold $X$ are critical points of the Calabi functional $$Ca(\Omega) = \int_{X}(\s(\Omega) - \hat\s(\Omega))^2$$ in a given K\"ahler class. Here $\hat\s(\Omega)$ denotes the average of the scalar curvature $\s(\Omega)$. The Euler-Lagrange equation is $$\bar\partial\nabla_\Omega\s(\Omega) = 0,$$ that is $\nabla_\Omega\s(\Omega)$ is a holomorphic vector field. A priori, the extremal metric equation is a sixth order equation for the K\"ahler potential. The following elementary observation (cf. \cite{APS11}) reduces this to a more manageable fourth order equation. Let $T$ be a non-trivial torus acting on $X$ with Lie algebra $\mathfrak{t}$.

 \begin{lem}
If there exists $\Phi\in C^{\infty}(X,\RR)^T$ and a $f\in \mathfrak{t}$ such that $$\s(\Omega+\ddbar \Phi) = f + \nabla_\Omega f\cdot\nabla_\Omega \Phi,$$ then $\Omega_\Phi :=\Omega+\ddbar\Phi$ is extremal.
\end{lem}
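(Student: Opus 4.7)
The strategy is to interpret the right-hand side of the equation as the Hamiltonian of a holomorphic Killing field computed with respect to the new K\"ahler form $\Omega_\Phi$. With that identification, the extremal equation $\bar\partial\nabla_{\Omega_\Phi}\s(\Omega_\Phi)=0$ follows at once by taking the $(1,0)$-gradient of both sides.

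More precisely, let $\xi$ be the real holomorphic Killing field in $\mathfrak{t}$ whose Hamiltonian with respect to $\Omega$ is $f$, so that $\xi^{1,0}$ is a fixed holomorphic vector field on $X$. Because $\Phi$ is $T$-invariant, $\xi$ also preserves $\Omega_\Phi = \Omega+\ddbar\Phi$ and hence admits a Hamiltonian $f_\Phi$ with respect to $\Omega_\Phi$. A short local calculation---comparing the two identities $\xi^{1,0}=\nabla_\Omega^{1,0}f$ and $\xi^{1,0}=\nabla_{\Omega_\Phi}^{1,0}f_\Phi$, and invoking the holomorphicity of $\xi^{1,0}$ in the form $\partial_{\bar j}(g^{i\bar k}f_{\bar k})=0$ to recognize the difference $(f_\Phi - f)_{\bar j}$ as a pure $\bar\partial$-derivative---shows that $f_\Phi$ coincides with $f + \nabla_\Omega f\cdot\nabla_\Omega \Phi$ up to an additive constant, pinned down by normalization. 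This is exactly the function on the right-hand side of the hypothesis.

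Given this, the assumed equation reads $\s(\Omega_\Phi) = f_\Phi$: the scalar curvature of $\Omega_\Phi$ is the Hamiltonian, with respect to $\Omega_\Phi$, of a holomorphic vector field. Applying $\nabla_{\Omega_\Phi}^{1,0}$ to both sides yields $\nabla_{\Omega_\Phi}\s(\Omega_\Phi) = \xi^{1,0}$, which is holomorphic by construction, so $\bar\partial\nabla_{\Omega_\Phi}\s(\Omega_\Phi)=0$. That is the defining extremal equation. The only substantive ingredient is the Hamiltonian transformation rule used above; the remaining manipulations are purely formal, and I expect no real obstacle beyond verifying the transformation formula carefully and keeping track of conventions for the sign and factor of $\sqrt{-1}$ in $\iota_\xi\Omega$.
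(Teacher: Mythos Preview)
Your argument is correct and is precisely the standard one: the right-hand side is the Hamiltonian of the holomorphic vector field $\xi^{1,0}=\nabla_\Omega f$ computed with respect to $\Omega_\Phi$, so the equation identifies $\s(\Omega_\Phi)$ with a holomorphy potential for $\Omega_\Phi$ and the extremal condition follows. The paper does not actually supply a proof of this lemma; it simply records it as an ``elementary observation'' and refers to \cite{APS11}, so there is nothing to compare against beyond noting that your reasoning is exactly the expected one. One small comment: your description of the local calculation is a bit cryptic; the clean way to see the transformation rule is to write $(f_\Phi)_{\bar k}=(g_\Phi)_{i\bar k}V^i=(g_{i\bar k}+\Phi_{i\bar k})V^i=f_{\bar k}+V^i\Phi_{i\bar k}$ and then observe $\partial_{\bar k}(V(\Phi))=V^i\Phi_{i\bar k}$ since $V$ is holomorphic, giving $f_\Phi=f+V(\Phi)$ up to a constant.
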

Note that since $\Phi$ is $T$-invariant, if we denote $\xi:= \nabla_\Omega f$, then $\xi(\Phi) = \bar\xi(\Phi)$, and so the right hand side in the above equation is real, as it should be.

For any K\"ahler metric $\Omega$, and an $\Omega$- pluri subharmonic function $\varphi$. we write $$\s(\Omega+\ddbar \varphi) = \s(\Omega) + \sL_\Omega(\varphi) + Q_\Omega(\varphi),$$ where $$\sL_\Omega(\varphi) = - \Delta_\Omega^2\varphi - R(\Omega)_{\al\bar \be}\nabla ^{\bar \be}\nabla^\al \varphi$$ is the linearization of the scalar curvature, and $Q_{\Omega}(\varphi)$ collects all the non-linear terms. We will denote the nonlinear term of order $k$ by a superscript. So for instance, 
\begin{align}\label{Q formula}
Q_\Omega^{(2)}(\varphi) &:= \frac{1}{2}\frac{d}{dt}\Big|_{t=0}\s(\Omega + t\ddbar\varphi) \nonumber\\
&= \varphi_{\al\bar\be}(\Delta\varphi)_{\be\bar \al} + \frac{1}{2}\Delta|\nabla\bar\nabla\varphi|^2 + R_{\al\bar\be}\varphi_{\lambda\bar \al}\varphi_{\be\bar\lambda},
\end{align}
where the Laplacian, covariant derivatives and Ricci curvature are with respect to the metric $\Omega$. We will also denote the nonlinear terms of order greater than $k$ by $Q_{\Omega}^{(\geq k)}(\varphi)$. The linearization is an elliptic operator, and it will be crucial to understand it's inverse. It is more convenient to  work instead with the self adjoint Lichnerowicz operator $$\sD^*_\Omega\sD_\Omega\varphi,$$ where $\sD_\Omega\varphi : = \dbar\nabla_\Omega\varphi,$ and $\sD^*_\Omega$ is the formal adjoint with respect to the inner product induced by $\Omega$ on the relevant bundle. A standard computation then shows that
\begin{equation}\label{eqn:lich}
\sL_\Omega(\varphi) = -\sD_\Omega^*\sD_\Omega(\varphi) + \nabla \varphi\cdot \nabla\s(\Omega),
\end{equation}
 and so the two operators coincide when $\Omega$ has constant scalar curvature.

 Now let us specialize to the case of a cscK K\"ahler surface $(M,\omega)$. Recall that $T\subset G$ is a torus fixing a point $p\in M$. We denote the centralizer of $T$ by $H\subset G$ with Lie algebra $\mathfrak{h}$. We also denote the corresponding Hamiltonians by $\overline{\mathfrak{h}}$. From definitions, it is clear that any $f\in\overline{\mathfrak{h}}$  is in the $T$-invariant kernel of the operator $\sD^*_\omega\sD_\omega$ and that any $h\in \overline{\mathfrak{h}}$ is orthogonal to the range of $\sD^*_\omega\sD_\omega$. The next Lemma is not required in what follows, but serves to  motivate Proposition \ref{residue}. Unlike the proof below, the proof of Proposition \ref{residue} requires a more involved integration by parts argument. 
 
  \begin{lem}\label{orthogonal}
For any smooth $\varphi$, and any $f\in \mathfrak{h}$, $$\int_M f \Big(Q^{(2)}_\omega(\varphi) + \mathrm{Re}(\nabla(\sD^*_\omega\sD_\omega \varphi) \cdot\nabla \varphi)\Big)\omega^2 = 0.$$
\end{lem}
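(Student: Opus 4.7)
My plan is to exploit the K\"ahler-class invariance of the Futaki pairing, combined with a routine Green's-identity computation on the surface. Set $\omega_t := \omega + t\ddbar\vp$. Because $\omega_0 = \omega$ is cscK and $f \in \overline{\mathfrak{h}}$ is a Hamiltonian of a holomorphic Killing field, the functional
$$\mathcal F(t) \;:=\; \int_M (\s(\omega_t) - \hat{\s})\,\bl_{\omega_t}(f)\,\omega_t^2$$
is a K\"ahler-class invariant and vanishes at $t = 0$, hence $\mathcal F(t) \equiv 0$. Applying the paper's definition $\bl_\Omega(f) = \bl(f) + \mathrm{Re}(\nabla\bl(f)\cdot \nabla u)$ with $u = t\vp$ (and using $\bl(f) = f$ on $M$, where no lifting is needed) gives the exact formula $\bl_{\omega_t}(f) = f + t\,\mathrm{Re}(\nabla f\cdot\nabla\vp)$.

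The next step is to Taylor-expand $\mathcal F(t)$ using the standard expansions
$$\s(\omega_t) - \hat{\s} = -t\,\sD^*_\omega\sD_\omega\vp + t^2 Q^{(2)}_\omega(\vp) + O(t^3), \qquad \omega_t^2 = \omega^2 + t\,\Delta\vp\,\omega^2 + O(t^2).$$
The $t$-coefficient of $\mathcal F(t)$ recovers $\int_M f\,\sD^*_\omega\sD_\omega\vp\,\omega^2 = 0$ (from self-adjointness of the Lichnerowicz operator and $\sD_\omega f = 0$), and forcing the $t^2$-coefficient to vanish yields
$$\int_M f\,Q^{(2)}_\omega(\vp)\,\omega^2 \;=\; \int_M (\sD^*_\omega\sD_\omega\vp)\,\mathrm{Re}(\nabla f\cdot\nabla\vp)\,\omega^2 + \int_M f\,(\sD^*_\omega\sD_\omega\vp)\,\Delta\vp\,\omega^2. \qquad (\ast)$$
Any additive constant arising in the first variation of $\bl_{\omega_t}(f)$ is killed by $\int_M \sD^*_\omega\sD_\omega\vp\,\omega^2 = 0$, so $(\ast)$ is unambiguous.

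To close, I would invoke the general identity (valid for any smooth real $f, g, \vp$ on a K\"ahler surface)
$$\int_M \bigl(f\,\mathrm{Re}(\nabla g\cdot\nabla\vp) + g\,\mathrm{Re}(\nabla f\cdot\nabla\vp)\bigr)\,\omega^2 \;=\; -\int_M fg\,\Delta\vp\,\omega^2,$$
which follows by expanding $\mathrm{Re}(\nabla(fg)\cdot\nabla\vp) = f\,\mathrm{Re}(\nabla g\cdot\nabla\vp) + g\,\mathrm{Re}(\nabla f\cdot\nabla\vp)$ and using $\int_M \langle d(fg), d\vp\rangle\,\omega^2 = -\int_M fg\,\Delta_\RR\vp\,\omega^2$. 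Taking $g := \sD^*_\omega\sD_\omega\vp$ and substituting into $(\ast)$ cancels the two $\mathrm{Re}(\nabla f\cdot\nabla\vp)$-terms and leaves exactly the claim. No step is really an obstacle; the one thing worth checking carefully is the first-variation identification for the Hamiltonian $\bl_{\omega_t}(f)$, but this is essentially immediate from the paper's own formula. One could alternatively bypass Futaki entirely by expanding $fQ^{(2)}_\omega(\vp)$ via the explicit formula for $Q^{(2)}_\omega$ and repeatedly integrating by parts using $\sD_\omega f = 0$; this is feasible but noticeably messier, and is the style of argument the authors flag as needing the more involved integration by parts of Proposition \ref{residue}.
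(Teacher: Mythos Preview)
Your proof is correct and is essentially the same argument as the paper's, with a slightly different packaging of the ``vanishing one-parameter family''. You take $\mathcal F(t)=\int_M(\s(\omega_t)-\hat\s)\,\bl_{\omega_t}(f)\,\omega_t^2$ and invoke Futaki's class-invariance to get $\mathcal F\equiv0$, then read off the $t^2$-coefficient; the paper instead takes the more elementary identity $\int_M \sD^*_{\omega_t}\sD_{\omega_t}(\varphi)\,f_t\,\omega_t^2=0$ (which follows directly from self-adjointness and $f_t\in\ker\sD_{\omega_t}$, no Futaki needed) and differentiates once at $t=0$. The two families differ by one $t$-derivative, and the final step---your Leibniz/Green identity with $g=\sD^*_\omega\sD_\omega\varphi$---is exactly the ``integrating the last two terms by parts'' in the paper's proof. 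The paper's version is marginally more self-contained since it avoids appealing to Futaki invariance, but the content is the same.
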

\begin{proof}
If $\omega_t = \omega+t\ddbar\varphi$, then $f_t = f+t\nabla  f\cdot\nabla \varphi$ is the Hamiltonian for $\nabla f$ with respect to $\omega_t$. It follows that $$\int_M\sD^*_{\omega_t}\sD_{\omega_t}(\varphi) f_t~\omega_t^2 $$ is a zero for all $t$. Differentiating at $t=0$, we obtain the identity, 
\begin{equation*}
\int_M\frac{d}{dt}\Big|_{t=0}\sD_{\omega_t}^*\sD_{\omega_t}(\varphi) f~\omega^2  +\int_M\sD^*_\omega\sD_\omega(\varphi) \nabla f\cdot\nabla\varphi~\omega^2 + \int_M\sD^*_\omega\sD_\omega(\varphi)f\Delta\varphi~\omega^2=0. 
\end{equation*} 
On the other hand, differentiating \eqref{eqn:lich} for $\omega_t$, since $\s_\omega$ is a constant, it follows that $$\frac{d^2}{dt^2}\Big|_{t=0}\s(\omega_t) = -\frac{d}{dt}\Big|_{t=0}\sD_\omega^*\sD_\omega(\varphi) -  \nabla\varphi\cdot \nabla(\sD^*_\omega\sD_\omega \varphi),$$ and so $$\frac{d}{dt}\Big|_{t=0}\sD_{\omega_t}^*\sD_{\omega_t}(\varphi) = -2Q_\omega^{(2)}(\varphi) - \nabla\varphi\cdot\nabla(\sD^*_\omega\sD_\omega \varphi).$$ Substituting in the identity above, and integrating the last two terms by parts, we obtain the required result.
\end{proof}

\subsection{A K\"ahler metric on $\bM$} The aim of this section is to construct an approximatly constant scalar curvature metric on $\bM$ by gluing a scalar flat metric on $\bC^2$ to $\omega$. By rescaling the metric we can assume that there exist normal coordinates on the unit ball $B_1$ centered at $p\in M$. We begin with the following elementary observation, and refer the reader to a proof in \cite{ALM16}.
\begin{lem}\label{normal}
The metric $\omega$ has the form $$\omega = \ddbar\Big(\frac{|z|^2}{2} + \varphi_\omega\Big),$$ where $\varphi_\omega=O(|z|^4)$. Moreover, $$\varphi_\omega = \sum_{k=4}^\infty A_k(z),$$ where $A_k(z)$ is a degree $k$ homogenous real polynomial. In addition one has that 
\begin{align*}
\Delta_0^2A_4 &= -\s_\omega,\\
\Delta_0^2A_5 &= 0.
\end{align*}
\end{lem}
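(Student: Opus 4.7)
The plan is to break the proof into two independent pieces: (a) a coordinate-normalization step that produces the special form $\omega = \ddbar(|z|^2/2 + \varphi_\omega)$ with $\varphi_\omega = O(|z|^4)$, and (b) a PDE step that translates the constant-scalar-curvature condition into identities on the Taylor coefficients $A_4, A_5$.

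For step (a), since $\omega$ is K\"ahler and defined on a contractible ball, the $\d\bar\d$-lemma provides a local potential $\Phi$ with $\omega = \ddbar \Phi$; after subtracting $\Phi(0)$ and a pluriharmonic function we may assume $\Phi$ vanishes to second order at $p$ and contains no pure holomorphic or anti-holomorphic pieces. A linear change in the $z_\alpha$'s (unitary on the tangent space) arranges the quadratic part to be $|z|^2/2$. The only remaining obstruction is the cubic part of $\Phi$, which splits as pure terms (of types $(3,0)$ and $(0,3)$, killable by a further pluriharmonic modification of $\Phi$) plus mixed terms of types $(2,1)$ and $(1,2)$. The latter can be removed by a biholomorphic change of coordinates of the form $w_\alpha = z_\alpha + c^\alpha_{\beta\gamma} z_\beta z_\gamma$: expanding $|w|^2/2 = |z|^2/2 + \mathrm{Re}(\bar z_\alpha c^\alpha_{\beta\gamma} z_\beta z_\gamma) + O(|z|^4)$, the cross term has exactly the $(2,1) + (1,2)$ shape needed to cancel the remaining cubic terms in $\Phi$. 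After this, $\varphi_\omega := \Phi - |z|^2/2 = O(|z|^4)$ and, being smooth and real, admits a formal Taylor expansion $\sum_{k\geq 4} A_k(z)$ with each $A_k$ a real homogeneous polynomial.

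For step (b) I would compute the scalar curvature in these coordinates. Writing $g_{\alpha\bar\beta} = \tfrac{1}{2}\delta_{\alpha\beta} + (\varphi_\omega)_{\alpha\bar\beta}$ and expanding
\[
\log\det g = -2\log 2 + 2\,\mathrm{tr}(\varphi_{\alpha\bar\beta}) - 2\,\mathrm{tr}\bigl((\varphi_{\alpha\bar\beta})^2\bigr) + O(|z|^6),
\]
the linear-in-$(\varphi_{\alpha\bar\beta})$ term is $\tfrac{1}{2}\Delta_0\varphi_\omega$ (with the convention $\Delta_0 = 2\sum_\alpha \d_\alpha\d_{\bar\alpha}$ from the Euclidean complex Laplacian), while the quadratic piece is $O(|z|^4)$. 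Substituting into $\s_\omega = -g^{\alpha\bar\beta}\d_\alpha\d_{\bar\beta}\log\det g$ and using $g^{\alpha\bar\beta}(0) = 2\delta^{\alpha\beta}$, the degree-$0$ term on the right equals $-\Delta_0^2 A_4$, and the degree-$1$ term equals $-\Delta_0^2 A_5$ (the quadratic-in-$\varphi_{\alpha\bar\beta}$ contribution only enters at degree $\geq 2$, and the correction from $g^{\alpha\bar\beta} - 2\delta^{\alpha\beta}$ enters at degree $\geq 2$ as well). Matching the constant value of $\s_\omega$ order by order then yields $\Delta_0^2 A_4 = -\s_\omega$ from the degree-$0$ identity and $\Delta_0^2 A_5 = 0$ from the vanishing of the degree-$1$ part.

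The main technical point is the cubic-cancellation in step (a): keeping careful track of which pieces of the cubic jet can be killed by a pluriharmonic shift of the potential versus a quadratic biholomorphism is where one must be most careful. Once the normal form is established, the computation in step (b) is bookkeeping with the determinant expansion, and the cscK hypothesis enters in a transparent way by forcing all non-zero Taylor coefficients of $\s_\omega$ to vanish.
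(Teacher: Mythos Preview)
Your argument is correct. The paper does not actually give a proof of this lemma; it simply refers the reader to \cite{ALM16} and calls it an ``elementary observation''. Your two-step strategy---Bochner-type normal coordinates to kill the jet of the potential through order three, followed by a degree-by-degree expansion of the scalar curvature---is the standard route and is essentially what one finds in the cited reference.

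Two minor slips are worth flagging, though neither affects the conclusion. First, the linear change that turns the Hermitian quadratic part of the potential into $|z|^2/2$ is a general element of $GL(2,\CC)$, not merely a unitary map: a unitary change diagonalizes the Hermitian form but does not rescale the eigenvalues. Second, with your stated convention $\Delta_0 = 2\sum_\alpha\d_\alpha\d_{\bar\alpha}$, the linear-in-$\varphi_\omega$ term $2\,\mathrm{tr}\big((\varphi_\omega)_{\alpha\bar\beta}\big)$ in the determinant expansion equals $\Delta_0\varphi_\omega$, not $\tfrac12\Delta_0\varphi_\omega$. With this corrected, the degree-$0$ and degree-$1$ parts of $\s_\omega = -g^{\alpha\bar\beta}\d_\alpha\d_{\bar\beta}\log\det g$ give $-\Delta_0^2 A_4$ and $-\Delta_0^2 A_5$ exactly as you claim, since both the quadratic term $\mathrm{tr}\big((\varphi_{\alpha\bar\beta})^2\big)$ and the correction $g^{\alpha\bar\beta}-2\delta^{\alpha\beta}$ contribute only from degree $2$ onward.
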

It is in fact possible \cite[Theorem 5, pg. 55]{BM48} to choose normal coordinates such that $T$ acts by unitary transformations. The action of $T$ then also lifts to an action on $\bC^2$, and allows us to work $T$-equivariantly.

We next describe a well known scalar flat K\"ahler metric on $\bC^2$. Recall that $\bC^2$ is the tautological bundle $\sO_{\PP^1}(-1)$. If $\tilde \pi:\bC^2\rightarrow \CC$ is the blow-down map, $p:\bC^2\rightarrow \PP^1$ is the bundle map, and $\omega_{FS}$ is the Fubini-Study metric normalized so that $\PP^1$ has volume $2\pi$, then $$\eta = \tilde \pi^*\omega_{Euc} + p^*\omega_{FS}$$ is a complete K\"ahler metric on $\bC^2$, called the {\em Burns-Simanca metric} (cf. \cite{Simanca91,Lebrun98}). In Euclidean coordinates on $\CC^2\setminus\{0\}$ we have the formula $$\eta = \ddbar\Big(\frac{|w|^2}{2} + \log{|w|}\Big).$$  From this it is clear that the metric is asymptotically locally Euclidean.

Next, let $\ga:[0,\infty)\rightarrow \mathbb{R}$ be a smooth function such that $\ga(t)\geq 0$, $\ga(t) = 0$ on $t<1$ and $\gamma(t) = 1$ on $t\geq 2$. For $\al\in(2/3,1)$, let $$r_\vep = \vep ^{\al},$$ and define cut-off functions $\ga_1(t) = \ga(|z|/r_\vep)$ and $\ga_2 = 1-\ga_1$. We then define a real closed $(1,1)$ form on $M_p$ by $\omega_\epsilon$ to be simply  $$\oep = \pi^*\omega + \vep^2\ddbar{\ga_2\log{|z|}}.$$ Note that $\oep = \omega$ on $M\setminus B_{2r_\vep}$ and is a small perturbation of $\vep^2\eta$ on $B_{r_\vep}$. 

\begin{lem}[cf. \cite{SySz16}]\label{zero approx}
For sufficiently small $\vep>0$, $\oep$ extends to a smooth K\"ahler metric on $\bM$ in the class $c_1(L_\vep).$  
\end{lem}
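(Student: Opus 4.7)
The plan is to verify three properties in turn: (i) that $\oep$ extends smoothly across the exceptional divisor $E$, (ii) that it is positive as a $(1,1)$-form on $\bM$, and (iii) that $[\oep] = c_1(L_\vep)$. The unifying observation is that the rescaling $z=\vep w$ in the normal coordinates near $p$ converts $\oep$ in a neighborhood of $E$ into a small perturbation of the Burns--Simanca metric $\vep^2\eta$.

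For smoothness, I split $\bM$ into three regions. Outside $B_{2r_\vep}$, $\gamma_2$ vanishes and $\oep=\omega$. On $B_{r_\vep}$, $\gamma_2\equiv 1$ and Lemma~\ref{normal} gives $\oep = \ddbar(|z|^2/2 + \varphi_\omega + \vep^2\log|z|)$; the substitution $z=\vep w$ kills the additive constant $\vep^2\log\vep$ under $\ddbar$ and yields
\[
\oep = \vep^2\eta + \ddbar\bigl(\varphi_\omega(\vep w)\bigr),
\]
both summands of which are smooth on $\bM$ (the first by definition of $\eta$ on $\bC^2$, the second because $\varphi_\omega$ is smooth on $M$). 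On the intermediate annulus $r_\vep\leq|z|\leq 2r_\vep$, $\oep$ is visibly a sum of smooth forms.

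Positivity is automatic on $M\setminus B_{2r_\vep}$. On $B_{r_\vep}$, the correction $\ddbar(\varphi_\omega(\vep w))$ is $O(\vep^4|w|^2)$ in the relevant range $|w|\lesssim\vep^{\al-1}$, hence dominated by $\vep^2\eta$. The genuinely delicate step is the annular gluing region, where the derivatives of $\gamma_2$ couple with the singularity of $\log|z|$. Expanding $\ddbar(\gamma_2\log|z|)$ by Leibniz and using $|\d^k\gamma_2|=O(r_\vep^{-k})$ for $k\leq 2$, $|\log|z||=O(|\log r_\vep|)$, $|\d\log|z||=O(r_\vep^{-1})$, and $|\ddbar\log|z||=O(r_\vep^{-2})$, I expect
\[
|\vep^2\ddbar(\gamma_2\log|z|)|_\omega = O\bigl(\vep^{2-2\al}|\log\vep|\bigr).
\]
Since $\al\in(2/3,1)$ forces $2-2\al>0$, this correction tends to zero uniformly and does not destroy the uniform positivity of $\omega$ for $\vep$ sufficiently small. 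This annular estimate is the main technical step.

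For the cohomology class, use the decomposition $H^2(\bM;\RR) = \pi^*H^2(M;\RR)\oplus\RR[E]$. Since $\oep=\pi^*\omega$ outside a neighborhood of $E$, the $\pi^*$-component of $[\oep]$ is $\pi^*[\omega]$, so writing $[\oep] = \pi^*[\omega] + c[E]$ and pairing with $E$ (using $E\cdot E=-1$) gives $c = -\int_E\oep$. Near $E$, $\oep - \pi^*\omega = \vep^2\ddbar\log|z|$, and in blow-up coordinates $z=(u,uv)$ the decomposition $\log|z|=\log|u|+\tfrac{1}{2}\log(1+|v|^2)$ together with the harmonicity of $\log|u|$ on $\{u\neq 0\}$ shows that $\ddbar\log|z|$ extends smoothly across $E$ as a multiple of $p^*\omega_{FS}$. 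The normalization $\int_{\PP^1}\omega_{FS}=2\pi$ then yields $\int_E\oep = 2\pi\vep^2$, whence $c=-2\pi\vep^2$ and $[\oep] = L_\vep$.
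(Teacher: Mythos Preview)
Your proof is correct and follows essentially the same strategy as the paper's: both decompose into the outer region, the gluing annulus, and the inner region, use the rescaling $z=\vep w$ to exhibit $\oep$ as a small perturbation of $\vep^2\eta$ near $E$, and obtain the same $O(\vep^{2-2\al})$ bound on the cut-off term in the annulus. The paper is slightly more explicit near $E$, passing to blow-up coordinates $(x,y)=(w^1/w^2,w^2)$ to check that the perturbation $\ddbar\varphi_\omega(\vep w)$ stays $O(\vep^2)$ relative to $\eta$ across the exceptional divisor, whereas you handle this implicitly via smoothness; conversely, you spell out the cohomology computation $\int_E\oep=2\pi\vep^2$ in more detail than the paper, which simply cites the volume of $E$.
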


\begin{proof}The proof in our case is even simpler than the one in \cite{SySz16}, since the leading non-Euclidean term of $\omega$ is of order $O(|z|^4)$.
We only need to consider the domain $B_{2r_\vep}$. It is more convenient to work with the form $\vep^{-2}\oep$ on $\bC^2$. Changing coordinates $w = \vep^{-1}z$, the ball is transformed to $\tilde B_{2R_\vep}\subset \bC^2$ where $R_\vep = \vep^{-1} r_\vep = \vep^{\al -1}.$
\begin{itemize}
\item {\bf The region $|w|<1$}. On this region $$\vep^{-2}\oep = \eta + \ddbar\Psi_0$$ where $\Psi_0(w) = \vep^{-2}\varphi_4(\vep w).$ The region can be covered by open sets  $$U_1 = \{(w^1,w^2)~|~ |w^2|<1,~|w^1|>|w^2|/2\},~U_2 = \{(w^1,w^2)~|~ |w^2|<1,~|w^2|>|w^1|/2\}$$On $U_1$ we introduce the ``blow-up" coordinates, $$x = w^1/w^2,~ w^2 = y.$$ Recalling that $\bC^2 = \sO_{\PP^1}(-1)$, the $x$-coordinate then is a coordinate on the base $\PP^1$, and the $y$-coordinate represents the fiber direction. Clearly $|x|<2$ and $|y|<1$. We can symmetrically introduce coordinates on $U_2$.  In these coordinates, it is not difficult to see that 
\begin{equation} \label{estimate for psi}
||\nabla^k\Psi|| = O(\vep^2),
\end{equation} for all $k\geq 2$. This follows from the fact that the leading term in $\varphi_4$ is a fourth order homogenous polynomial in $(w^1,w^2)$ and hence also $(x,y)$. Since $\eta$ is a K\'ahler metric on this region, if $\vep>0$  is sufficiently small, $\vep^{-2}\oep$ extends to a K\"ahler metric.
\item{\bf The region $1<|w|<R_\vep$.} Here again the cut-off function is one, so like before we write $$\vep^{-2}\oep - \eta = \ddbar\Psi_0.$$ Now $$|\nabla^k \Psi_0| = O(\vep^2R_\vep^{4-k}) = O(\vep^{4\al -2}) = o(\vep^{2/3}),$$ and so once again for sufficiently small $\vep>0$ $\vep^{-2}\oep$ is a positive form. 

\item{\bf The region $R_\vep<|w|<2R_\vep$.}  Here we think of the metric as a perturbation of $\vep^{-2}\omega$. That is, $$\vep^{-2}\oep = \vep^{-2}\omega + \ddbar(\ga_2\log{|w|}).$$ But $$||\nabla^2\ga_2\log{|w|}|| = O(R_\vep^{-2}),$$ and so for small $\vep>0$ this is again a positive form. 

 \end{itemize}
 
Since the volume of the exceptional divisor is $2\pi$, and $\oep$ is equal to $\omega$ away from a small region containing the exceptional divisor, $\oep \in c_1(L_\vep).$\end{proof}

\subsection{Lifts of Hamiltonian functions} In this section we define the lifting operator $$\bl : \overline{\mathfrak{h}}\rightarrow C^{\infty}(\bM,\RR)^T$$ that shows up in the statement of the theorem. Writing $\overline{\mathfrak{h}} = \overline{\mathfrak{t}}\oplus \mathfrak{h}'$ such that $h(p) = 0$ for all $h\in \overline{\mathfrak{h}}'$ we define $$\bl(h) = \begin{cases}h + \vep^2\mathrm{Re}\langle\nabla h,\nabla \ga_2\log|z|\rangle,~h\in \overline{\mathfrak{t}}\\ h,~h\in \mathfrak{h}'. \end{cases}$$

If $h\in \overline{\mathfrak{t}}$, then $\xi = \nabla h$ vanishes at $p$, and hence lifts to a holomorphic vector field $\tilde \xi$ on $\bM$. As observed in \cite{SySz16}, if $h\in \overline{\mathfrak{t}}$, $\bl(h)$ defined as above extends to a smooth function on $\bM$, and is in fact precisely the Hamiltonian of $\tilde \xi$ with respect to the metric $\oep$. Note also, that in either case, $$\bl(h) = h$$ on $|z|>2r_\vep$. We will also need to define lifts for metrics other than $\oep$. So if $\Omega = \oep + \ddbar u$, we define $$\bl_{\Omega}(h) = \bl(h) + \mathrm{Re}\langle \nabla \bl(h), \nabla u\rangle,$$ where the gradient is taken with respect to $\oep$. 

\section{Analysis in weighted Holder spaces} In this section, we will study the mapping properties of the linear operator $\sF:C^{4,\al}_{loc}(\bM)\times \overline{\mathfrak{h}}$ defined by

$$\sF_{\oep}(\varphi,f) := \sL_{\oep}(\varphi) - \bl(f).$$
In particular, we will be concerned with uniform bounds for the inverse of this operator. The relevant function spaces are certain {\em weighted H\"older spaces}. We will now give a brief review of these spaces. For more details, the reader should refer to \cite{PacRiv00,Szbook}.

For any $f\in C^{k,\al}_{loc}(M_p)$ and $\delta\in \RR$, we define the weighted norm $$||f||_{C^{k,\al}_{\delta,\omega}}(M_p) = ||f||_{C^{k,\al}_\omega(M\setminus B_{1/2})} + \sup_{r\in (0,1/2)}r^{-\delta}||f||_{C^{k,\al}_{r^{-2}\omega}(B_{2r}\setminus B_r)},$$ where the $\omega$ and $r^{-2}\omega$ indicate the metric used to measure the norms. The weighted H\"older space is then a subspace of $ C^{k,\al}_{loc}(M_p)$ consisting of all functions with finite $||\cdot||_{C^{k,\al}_\delta}$ norm. It can be shown that this is a Banach space. 

The next Lemma is a consequence of the duality theory for weighted H\"older spaces in \cite{Bart, Mel93} (see also the discussion in \cite[Remark 4.5]{GV16}).  The second part follows from the fact that for $\delta>0$, any $f\in C^{4,\al}_{\delta}(M_p)$ in the kernel of $\sD_\omega^*\sD_\omega$ is in fact smooth, and hence is the Hamiltonian of a holomorphic vector field.
\begin{lem}\label{lin M_p} If $\delta\notin \ZZ$, the image of $$\sD_\omega^*\sD_\omega:C^{4,\al}_{\delta}(M_p)^T\rightarrow C^{0,\al}_{\delta-4}(M_p)^T$$ is the orthogonal complement of 
the kernel of $$\sD_\omega^*\sD_\omega:C^{4,\al}_{-\delta}(M_p)^T\rightarrow C^{0,\al}_{-\delta-4}(M_p)^T.$$ In particular, if $\delta\in (-1,0)$ and $v\in C^{0,\al}_{\delta-4}(M_p)^T$ such that for any $f\in \overline{\mathfrak{h}}$ with $f(p) = 0$, $$\int_{M_p}vf\omega^2 = 0,$$ then there exists a $u\in C^{4,\al}_{\delta}(M_p)^T$ such that $$\sD_\omega^*\sD_\omega u = v.$$ 

\end{lem}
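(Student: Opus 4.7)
The plan is to split the lemma into its two assertions and handle each with standard tools.

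For the first assertion I would appeal to the Fredholm/duality theory for elliptic operators between weighted H\"older spaces on a manifold with an isolated conical point, as developed by Bartnik and Melrose and recorded in the form needed here in \cite{PacRiv00,Szbook}. The operator $\sD_\omega^*\sD_\omega$ is elliptic of order $4$ with smooth coefficients, self-adjoint for the $L^2(\omega^2)$-pairing, and its leading part near $p$ agrees with that of $\Delta_0^2$, so its indicial roots are exactly the integers. For $\delta \notin \ZZ$ this places $\sD_\omega^*\sD_\omega : C^{4,\al}_\delta(M_p)^T \to C^{0,\al}_{\delta-4}(M_p)^T$ in the Fredholm range, and the general duality theorem identifies its image with the $L^2$-orthogonal complement of the kernel of the formal adjoint on the dually weighted space $C^{4,\al}_{-\delta}(M_p)^T$. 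Self-adjointness of $\sD_\omega^*\sD_\omega$ then gives the stated identification, and $T$-invariance passes through because the operator commutes with the $T$-action.

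For the ``in particular'' clause I would apply the first statement at $\delta \in (-1,0)$, so that the dual weight is $-\delta \in (0,1)$. The task reduces to identifying the $T$-invariant kernel of $\sD_\omega^*\sD_\omega$ on $C^{4,\al}_{-\delta}(M_p)^T$ with $\{f \in \overline{\mathfrak{h}} : f(p)=0\}$. Any $f$ in that kernel satisfies $|f| = O(r^{-\delta})$ near $p$ with $-\delta > 0$, so $f$ is bounded and tends to zero at $p$. Interior elliptic regularity makes $f$ smooth on $M_p$, and since it is a bounded solution of a fourth-order elliptic equation with smooth coefficients, the removable singularity theorem extends $f$ to a smooth function on all of $M$ with $\sD_\omega^*\sD_\omega f = 0$. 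Because $(M,\omega)$ is cscK, the $T$-invariant kernel of $\sD_\omega^*\sD_\omega$ on compact $M$ coincides with $\overline{\mathfrak{h}}$, and the decay condition at $p$ selects the subspace vanishing there. The orthogonality hypothesis on $v$ is then exactly the cokernel condition, so the first part produces the required $u$.

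The step needing genuine care is the removable singularity argument: $f$ is only a priori in a weighted H\"older space, and this must be upgraded to smoothness at $p$. The cleanest route is the indicial/asymptotic expansion built into the Bartnik-Melrose framework: since $-\delta \in (0,1)$ avoids the integer indicial set, any such $f$ admits an asymptotic expansion at $p$ whose leading nonzero term corresponds to the next indicial root, namely $1$; iterating past each integer root produces a full Taylor-type expansion, exhibiting $f$ as smooth at $p$. Modulo this point, the proof is essentially a packaging of the duality theorem with the cscK structure, and I would present it by first quoting the Fredholm/duality statement from \cite{Bart,Mel93} (in the form of \cite{PacRiv00,Szbook}), then executing the regularity argument, and finally invoking the cscK hypothesis to name the kernel.
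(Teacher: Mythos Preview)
Your proposal is correct and follows essentially the same approach as the paper. The paper does not give a detailed proof but merely records that the first assertion is a consequence of the duality theory in \cite{Bart,Mel93} (cf.\ \cite[Remark~4.5]{GV16}), and that the second follows because any $f\in C^{4,\al}_{-\delta}(M_p)$ with $-\delta>0$ in the kernel of $\sD_\omega^*\sD_\omega$ is in fact smooth on $M$ and hence the Hamiltonian of a holomorphic vector field; your write-up is a faithful fleshing-out of exactly this sketch, including the removable-singularity step that the paper leaves implicit.
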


Next, we define the weighted norm on $\bC^2$ with respect to $\eta$ by $$||f||_{C^{k,\al}_{\delta,\eta}} = ||f||_{C^{k,\al}_{\eta}(B_2)} + \sup_{R\geq 1}R^{-\delta}||f||_{C^{k,\al}_{R^{-2}\eta}(B_{2R}\setminus B_R)}.$$

The weighted H\"older space on $\bM$ is then defined by interpolating between the two norms defined above. More precisely, we set $$||f||_{C^{k,\al}_{\delta}} = ||f||_{C^{k,\al}_{\omega}(M\setminus B_1)} + \sup_{\vep\leq r\leq 1/2}r^{-\delta}||f||_{C^{k,\al}_{r^{-2}\omega_{\vep}}(B_{2r}\setminus B_r)} + \vep^{-\delta}||f||_{C^{4,\al}_{\eta}(B_\vep)},$$ and let $C^{k,\al}_\delta$ denote the corresponding Banach space. As before we denote the $T$-invariant part by $(C^{k,\al}_{\delta})^T$. Note that as sets,  $C^{k,\al}_\delta$ and $C^{k,\al}$ are identical. As pointed out in \cite{Sz12, Sz15}, a more useful way of thinking of these norms is that $||f||_{C^{k,\al}_\delta} < C$ if 
\begin{align*}
|\nabla^jf| &< C,~r>1\\
|\nabla^j f| &< Cr^{\delta-j},~ \vep\leq r\leq1\\
|\nabla^j f|&<C\vep^{\delta-j},~ r\leq\vep.
\end{align*}

In our estimates we will use the following two important facts. These are not difficult to verify. 
\begin{align*}
||f||_{C^{k,\al}_{\delta'}} &\leq \begin{cases} ||f||_{C^{k,\al}_\delta},~ \delta'\leq \delta\\ 
\vep^{\delta-\delta'}||f||_{C^{k,\al}_\delta},~\delta<\delta'. \end{cases}\\
||fg||_{C^{k,\al}_{\delta + \delta'}} &\leq ||f||_{C^{k,\al}_\delta}||g||_{C^{k,\al}_{\delta'}}.
\end{align*}
The second inequality is especially useful to control nonlinear terms. We have the following estimate on the norm of the lifting operator $\bl$.
\begin{lem}\label{estimate on lift}For any $f\in \overline{\mathfrak{h}}$, $$||\bl(f)||_{C^{1,\al}_0}\leq C|f|,$$ where $|f|$ is any norm on the finite dimensional vector space $\overline{\mathfrak{h}}$.
\end{lem}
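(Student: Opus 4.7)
The plan is to use linearity of $\bl$ together with the decomposition $\overline{\mathfrak{h}} = \overline{\mathfrak{t}} \oplus \mathfrak{h}'$ to reduce the estimate to each summand, and to exploit the finite-dimensionality of $\overline{\mathfrak{h}}$ to work with a convenient norm such as $\|\cdot\|_{C^3(M)}$. The summand $\mathfrak{h}'$ is immediate: if $h \in \mathfrak{h}'$ then $\bl(h) = \pi^* h$ is just a smooth function on $M$ pulled back to $\bM$, the outer part of the weighted norm is the standard $C^{1,\al}$ bound on $M$, and the pointwise bounds $|\nabla^j \bl(h)| \leq Cr^{-j}$ (respectively $C\vep^{-j}$) on the intermediate and inner regions hold automatically because $|\nabla^j h|$ is globally bounded while the weights $r^{-j}$ and $\vep^{-j}$ are always at least one in the relevant range.

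The substantive case is $h \in \overline{\mathfrak{t}}$, where one must control
\[
E := \vep^2 \mathrm{Re}\langle \nabla h, \nabla(\ga_2\log|z|)\rangle,
\]
supported in $\{|z| \leq 2r_\vep\}$. The key structural fact is that $\xi := \nabla h$ is a holomorphic vector field vanishing at $p$, so in the unitary normal coordinates of Lemma \ref{normal} one has $\xi = \sum_{i,j} c^j_i z^j \partial_{z^i} + O(|z|^2)$. I would split the estimate into three regions. On $\{\vep \leq |z| \leq r_\vep\}$ the cut-off is identically $1$, $\xi(\log|z|)$ is bounded and $0$-homogeneous in $z$, and its $j$-th derivatives decay like $|z|^{-j}$; the required pointwise bound $|\nabla^j E|_{\oep} \leq Cr^{-j}$ then follows with room to spare from the $\vep^2$ prefactor (and the Hölder piece is handled by applying the same scaling argument to one more derivative). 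On the transition annulus $\{r_\vep \leq |z| \leq 2r_\vep\}$, differentiating $\ga_2$ costs factors of $r_\vep^{-1}$ and multiplies by $\log|z| = O(|\log\vep|)$, but the tiny prefactor $\vep^2|\log\vep|$ easily absorbs these losses since $r_\vep = \vep^\al$ with $\al \in (2/3, 1)$.

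The only step with genuine geometric content is the innermost region $\{|z| \leq \vep\}$. Here I would pass to the blow-up coordinates $w = z/\vep$, under which $\vep^2 \log|z| = \vep^2\log\vep + \vep^2\log|w|$ (the constant being harmless) and $\xi$ becomes the linear vector field $\xi_0 = \sum c^j_i w^j \partial_{w^i}$ plus an $O(\vep)$ perturbation. A direct computation in an affine blow-up chart $w^1 = xy$, $w^2 = y$ yields
\[
\xi_0(\log|w|) = \frac{c^1_1|x|^2 + c^1_2 x + c^2_1\bar x + c^2_2}{2(|x|^2+1)},
\]
which is smooth in $(x,y)$ and so extends smoothly across the exceptional divisor $\{y=0\}$. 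Consequently $E$ is $\vep^2$ times a function with uniformly bounded $C^{1,\al}_\eta$ norm on $\{|w|\leq 1\}$, giving an $O(\vep^2)$ contribution to the inner piece of $\|\bl(h)\|_{C^{1,\al}_0}$. Combining the three regional estimates with the trivial bound on $h$ itself gives the claim.

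The only substantive point is the smooth extension across the exceptional divisor verified above; everything else is standard bookkeeping with the rescalings built into the weighted Hölder norms, so I do not anticipate any serious obstacle.
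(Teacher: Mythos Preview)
The paper states this lemma without proof, so there is nothing to compare against directly. Your argument is essentially correct and supplies exactly the details the paper suppresses.

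A few remarks. First, the smooth extension of $\bl(h)$ across the exceptional divisor for $h\in\overline{\mathfrak{t}}$ is precisely the fact the paper attributes to \cite{SySz16} in Section~2.3 (``$\bl(h)$ \ldots\ extends to a smooth function on $\bM$, and is in fact precisely the Hamiltonian of $\tilde\xi$ with respect to $\oep$''). Your explicit computation of $\xi_0(\log|w|)$ in the blow-up chart reproves this. Note that the higher-order holomorphic part of $\xi$ (the $O(|z|^2)$ terms) also needs to be checked: since any monomial $(w^1)^a(w^2)^b$ with $a+b\ge2$ becomes $x^ay^{a+b}$ in the chart and hence carries at least a factor of $y^2$, the contribution of these terms to $\xi(\log|w|)$ is $O(\vep y)$, which is again smooth and small. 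You gloss this as ``an $O(\vep)$ perturbation''; that is correct but it is worth noting why the perturbation remains smooth across $\{y=0\}$.

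Second, your treatment of the intermediate and transition annuli is fine; the only thing to be slightly careful about is that in the definition of $\bl$ the gradients are taken with respect to $\omega$ (not $\oep$), but since $\omega$ and $\oep$ differ only by $O(\vep^2r^{-2})$ on $\{|z|\ge\vep\}$ and you already have an $\vep^2$ prefactor, this makes no difference to the estimates.

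Overall your proof is sound.
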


We now state some fundamental estimates which will allow us to control various linear and nonlinear terms in weighted Holder spaces. The proof of the following estimate can be found in \cite{Sz12}
\begin{lem}\label{linear terms estimate}
There exist constants $c_0$ and $C$ such that $$||\varphi||_{C^{4,\al}_2} < c_0 \implies ||\sL_{\omega_\varphi}(f) - \sL_{\oep}(f)||_{C^{0,\al}_{\delta-4}} \leq C||\varphi||_{C^{4,\al}_2}||f||_{C^{4,\al}_{\delta}}.$$
\end{lem}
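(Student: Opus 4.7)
The plan is to expand $\sL_{\ovphi}(f) - \sL_{\oep}(f)$ as a power series in $\ddbar\varphi$ and to bound each term using the product inequality for weighted H\"older norms recorded in the excerpt. The smallness hypothesis $||\varphi||_{C^{4,\al}_2} < c_0$ is used to guarantee convergence of the Neumann-type series for the inverse metric, and of the geometric series that arises from the higher-order contributions in $\varphi$.

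First I would write $\sL_\Omega(f) = -\Delta_\Omega^2 f - R(\Omega)^{\al\bbe} f_{\al\bbe}$ in local coordinates, so that the dependence on $\Omega$ enters only through $g_\Omega^{i\bar j}$, its derivatives up to order two, and $\log\det g_\Omega$. For $\Omega = \ovphi$ the inverse metric admits the Neumann expansion
\[
g_\varphi^{i\bar j} = g_\vep^{i\bar j} - g_\vep^{i\bar k}\varphi_{k\bar\ell}g_\vep^{\ell\bar j} + \sum_{k\geq 2}(-1)^k\bigl(g_\vep^{-1}\ddbar\varphi\bigr)^k g_\vep^{-1},
\]
and $\log\det(I + g_\vep^{-1}\ddbar\varphi)$ admits an analogous logarithmic series. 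The hypothesis $||\varphi||_{C^{4,\al}_2} < c_0$ controls $|\ddbar\varphi|_{\oep}$ uniformly on $\bM$, since weight $2$ forces $|\nabla^2 \varphi| \lesssim r^{2-2} = O(1)$ at every scale; for $c_0$ small enough these series converge, with $k$-th term dominated schematically by $|\ddbar\varphi|^k$.

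Substituting into $\sL_{\ovphi}(f) - \sL_{\oep}(f)$ yields a finite collection of schematic terms of the form
\[
A_k \cdot B \;=\; \bigl(\nabla^{m_1}\varphi\bigr)\cdots\bigl(\nabla^{m_k}\varphi\bigr)\cdot\nabla^\ell f,
\]
with $k \geq 1$. The key observation is the derivative-count identity
\[
m_1 + \cdots + m_k + \ell = 2k + 4,
\]
which holds automatically in each term: each $\ddbar\varphi$ introduced by the Neumann expansion contributes two intrinsic derivatives of $\varphi$, and the remaining four derivatives in $\sL$ get distributed between the $\varphi$-factors and $f$. Equivalently, the scale-covariance of the fourth-order operator $\sL$ under weight-$2$ perturbations of the metric forces the output to land in weight $\delta - 4$. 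With this identity in hand, iterating the product inequality $||fg||_{C^{0,\al}_{\delta+\delta'}}\leq ||f||_{C^{0,\al}_\delta}||g||_{C^{0,\al}_{\delta'}}$ gives
\[
||A_k\cdot B||_{C^{0,\al}_{\delta-4}} \leq ||\varphi||_{C^{4,\al}_2}^k\,||f||_{C^{4,\al}_\delta},
\]
since the sum of the individual weights $(2 - m_1) + \cdots + (2 - m_k) + (\delta - \ell)$ simplifies to $\delta - 4$. Extracting one copy of $||\varphi||_{C^{4,\al}_2}$ and summing the convergent geometric series $\sum_{k\geq 1} c_0^{k-1}$ then yields the stated estimate.

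The main obstacle is the purely algebraic bookkeeping of enumerating all terms in the expansion of $\sL_{\ovphi}-\sL_{\oep}$ and checking the derivative-count identity in each one, a task entirely parallel to the analogous estimates in \cite{Sz12, Sz15}. No analytic input beyond the product inequality for weighted H\"older norms and the uniform convergence of the Neumann and logarithmic series is required.
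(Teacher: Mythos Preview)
The paper does not give its own proof of this lemma; it simply cites \cite{Sz12}. Your argument is correct and is precisely the standard one: it is the same derivative-counting-plus-product-inequality scheme that the paper \emph{does} spell out in the proof of the adjacent Lemma~\ref{nonlinear terms estimate} on the nonlinear terms, where schematic factors $\nabla^{a_1}\varphi\cdots\nabla^{a_k}\varphi$ with $a_1+\cdots+a_k=2(k+1)$ play the same role as your $m_1+\cdots+m_k+\ell=2k+4$. One small point worth making explicit: the background-metric factors $g_\vep$, $g_\vep^{-1}$ and their derivatives that appear in the Neumann expansion carry weight $0$ (the paper records $||g_\vep||_{C^{2,\al}_0},||g_\vep^{-1}||_{C^{2,\al}_0}<C$ at the start of that proof), so they do not disturb your weight bookkeeping.
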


Next, we need the following generalization of Lemma 21 in \cite{Sz12}.

\begin{lem}\label{nonlinear terms estimate}
There exist constants $c_0$, $C$ and $C'$ such that $$||Q_{\oep}^{(k)}(\varphi) - Q_{\oep}^{(k)}(\psi)||_{C^{0,\al}_{\delta-4}} \leq Ck^3(||\varphi||_{C^{4,\al}_2} + ||\psi||_{C^{4,\al}_2})^{k-1}||\varphi-\psi||_{C^{4,\al}_{\delta}},$$ and if $||\varphi||_{C^{4,\al}_2}, ||\psi||_{C^{4,\al}_2}<c_0$, then $$||Q_{\oep}^{(\geq k)}(\varphi) - Q_{\oep}^{(\geq k)}(\psi)||_{C^{0,\al}_{\delta-4}} \leq C'(||\varphi||_{C^{4,\al}_2} + ||\psi||_{C^{4,\al}_2})^{k-1}||\varphi-\psi||_{C^{4,\al}_{\delta}}.$$ 
\end{lem}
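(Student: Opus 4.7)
The plan is to exploit the polynomial structure of $Q^{(k)}_{\oep}$ as a symmetric $k$-linear form. Since $Q^{(k)}_{\oep}(\varphi) = \frac{1}{k!}\frac{d^k}{ds^k}|_{s=0}\s(\oep + s\ddbar\varphi)$ is homogeneous of degree $k$ in $\varphi$, polarization produces a symmetric $k$-linear form $P_k$ with $Q^{(k)}_{\oep}(\varphi) = P_k(\varphi, \ldots, \varphi)$. Writing $\s(\oep + \ddbar\varphi) = -(\oep + \ddbar\varphi)^{i\bar j}\d_i\d_{\bar j}\log\det(\oep + \ddbar\varphi)$ and expanding via the Neumann series for the inverse and the Mercator series for the log-determinant, each monomial contributing to $P_k(\varphi_1,\ldots,\varphi_k)$ is a contraction of the schematic form $T(\oep) \cdot \nabla^{d_1}\varphi_{\sigma(1)} \cdots \nabla^{d_k}\varphi_{\sigma(k)}$ with $d_j \geq 2$ and $\sum_j d_j = 2k+2$, since the total derivative order of $\s$ in its argument is $4$ while the $k$ factors of $\ddbar\varphi$ already account for $2k$. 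A direct combinatorial count produces an $O(k^2)$ bound on the number of distinct such monomials.

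For the first estimate, I would use the symmetric multilinearity identity
$$Q^{(k)}_{\oep}(\varphi) - Q^{(k)}_{\oep}(\psi) = k\int_0^1 P_k(\chi_t, \ldots, \chi_t, \varphi - \psi)\, dt, \qquad \chi_t := (1-t)\psi + t\varphi,$$
which follows immediately from $P_k$ being symmetric and multilinear. For each monomial inside $P_k(\chi_t, \ldots, \chi_t, \varphi - \psi)$, measure the $\varphi - \psi$ factor in $||\cdot||_{C^{4,\al}_\delta}$ (contributing weight $\delta - 2$ after the two derivatives of the $\ddbar$) and each $\chi_t$ factor in $||\cdot||_{C^{4,\al}_2}$ (weight $0$), and iterate the algebra property $||fg||_{C^{0,\al}_{\delta+\delta'}} \leq ||f||_{C^{0,\al}_\delta}||g||_{C^{0,\al}_{\delta'}}$. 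The two extra derivatives (beyond the baseline $\ddbar$ on each factor) shift the total weight down by $2$ more, yielding $\delta - 4$ as required. Combined with $||\chi_t||_{C^{4,\al}_2} \leq ||\varphi||_{C^{4,\al}_2} + ||\psi||_{C^{4,\al}_2}$, the factor $k$ from the integral prefactor, and the $O(k^2)$ count of monomials, we obtain the $Ck^3$ bound.

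For the second estimate, write $Q^{(\geq k)}_{\oep}(\varphi) - Q^{(\geq k)}_{\oep}(\psi) = \sum_{n \geq k}[Q^{(n)}_{\oep}(\varphi) - Q^{(n)}_{\oep}(\psi)]$ and apply the first estimate termwise. Setting $s := ||\varphi||_{C^{4,\al}_2} + ||\psi||_{C^{4,\al}_2} \leq 2c_0$, the tail sum takes the form
$$\sum_{n\geq k} Cn^3 s^{n-1}||\varphi-\psi||_{C^{4,\al}_\delta} = Cs^{k-1}\Bigl(\sum_{n\geq k}n^3 s^{n-k}\Bigr)||\varphi-\psi||_{C^{4,\al}_\delta},$$
and, choosing $c_0$ small enough, the polynomial growth $n^3$ is absorbed by the geometric decay in $s^{n-k}$, reflecting absolute convergence of the Neumann and Mercator series in a neighborhood of $\varphi = 0$; this produces the desired bound with a constant $C'$.

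The principal obstacle will be the combinatorial count of the monomials contributing to $P_k$, which underlies the $Ck^3$ prefactor in the first estimate: one must carefully track the Neumann and Mercator expansions and the way in which the extra pair of $\d\dbar$-derivatives from the Ricci form distributes across the $k$ Hessians, while verifying that no individual factor is asked to carry more than four derivatives (so that the $C^{4,\al}$ norms suffice to bound every contribution). Once this count is in place, the rest of the argument reduces to routine applications of the algebra property of weighted H\"older norms and the mean-value identity above.
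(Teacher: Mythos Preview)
Your proposal is correct and follows essentially the same strategy as the paper: identify the schematic structure of $Q^{(k)}_{\oep}$ as a sum of $O(k^2)$ monomials $\nabla^{a_1}\varphi\cdots\nabla^{a_k}\varphi$ with $\sum a_j = 2(k+1)$ and each $a_j\leq 4$, extract a factor of $\varphi-\psi$, estimate via the algebra property of weighted H\"older norms, and then sum the resulting geometric series. The only cosmetic difference is that the paper extracts the $\varphi-\psi$ factor by a direct telescoping sum $\prod\nabla^{a_j}\varphi - \prod\nabla^{a_j}\psi = \sum_j(\cdots)\nabla^{a_j}(\varphi-\psi)(\cdots)$, whereas you use polarization and the integral mean-value identity; these are equivalent devices and yield the same $k\cdot O(k^2)=O(k^3)$ prefactor.
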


\begin{proof}
First note that if $\vep$ is sufficiently small, if we write $g_\vep$ for the metric corresponding to $\oep$, then $$||g_\vep||_{C^{2,\al}_0},||g_\vep^{-1}||_{C^{2,\al}_0}<C,$$ for constant $C$ independent of $\vep$. Moving on to a proof of the Lemma, first consider the case when $k=3$. A general term in $Q^{(3)}(\varphi)$ has the form $$\nabla^a\varphi\nabla^b\varphi\nabla^c\varphi,$$ where $a+b+c = 8$, and each index is smaller than or equal to $4$. Subtracting the analogous term for $\psi$, we have 
\begin{align*}
||\nabla^a\varphi\nabla^b\varphi\nabla^c\varphi-\nabla^a\psi\nabla^b\psi\nabla^c\psi||_{C^{0,\al}_{\delta-4}} \\
= ||\nabla^a\varphi\nabla^b\varphi\nabla^c\varphi -  \nabla^a\varphi\nabla^b\varphi\nabla^c\psi||_{C^{0,\al}_{\delta-4}} &+  ||\nabla^a\varphi\nabla^b\varphi\nabla^c\psi - \nabla^{a}\varphi\nabla^b\psi\nabla^c\psi||_{C^{0,\al}_{\delta-4}}\\ 
&+||\nabla^a\varphi\nabla^b\psi\nabla^c\psi - \nabla^{a}\psi\nabla^b\psi\nabla^c\psi||_{C^{0,\al}_{\delta-4}}.\\ 
\end{align*}
We estimate the first term. 
\begin{align*}
 ||\nabla^a\varphi\nabla^b\varphi\nabla^c\varphi -  \nabla^a\varphi\nabla^b\varphi\nabla^c\psi||_{C^{0,\al}_{\delta-4}} &\leq C||\nabla^{a}\varphi\nabla^b\varphi||_{C^{0,\al}_{ -4 + c}}||\nabla^c(\varphi-\psi)||_{C^{4,\al}_{\delta -c}}\\
 &\leq C||\nabla^{a}\varphi\nabla^b\varphi||_{C^{0,\al}_{4 -a-b}}||\varphi-\psi||_{C^{4,\al}_{\delta}}\\
 &\leq C||\nabla^a\varphi||_{C^{0,\al}_{2-a}}|\nabla^b\varphi||_{C^{0,\al}_{2-b}}||\varphi-\psi||_{C^{4,\al}_{\delta}}\\
 &\leq C||\varphi||^2_{C^{4,\al}_2}||\varphi-\psi||_{C^{4,\al}_{\delta}}.
\end{align*}
The other two terms can be estimated in a similar way. For a general $k$, we would have terms of the sort $$\nabla^{a_1}\varphi\cdots\nabla^{a_k}\varphi,$$ where $a_1+\cdots a_k = 2(k+1)$. Just as above, we can now break up the difference of two such terms into $k$ differences, and obtain an estimate of the form $$||\nabla^{a_1}\varphi\cdots\nabla^{a_k}\varphi - \nabla^{a_1}\psi\cdots\nabla^{a_k}\psi||_{C^{0,\al}_{\delta-4}} <Ck(||\varphi||_{C^{4,\al}_2} + ||\psi||_{C^{4,\al}_2})^{k-1}||\varphi-\psi||_{C^{4,\al}_{\delta}}.$$ The number of solutions to $a_1+\cdots a_k = 2(k+1)$ is bounded by $k^2$, and so adding up all the contributions $$||Q_{\oep}^{(k)}(\varphi) - Q_{\oep}^{(k)}(\psi)||_{C^{0,\al}_{\delta-4}} \leq Ck^3(||\varphi||_{C^{4,\al}_2} + ||\psi||_{C^{4,\al}_2})^{k-1}||\varphi-\psi||_{C^{4,\al}_{\delta}}$$
If $c_0$ is small enough, we can obtain the second estimate from the first by simply summing up a convergent series.
\end{proof}
We also need a version of Lemma 19 in \cite{Sz15}.
\begin{lem}\label{nonlinear estimate 2}
Let $\omega = \oep + \ddbar \varphi$. There exists a constant $c_0$ such that if $$||\varphi||_{C^{4,\al}_2},||\psi||_{C^{4,\al}_2}<c_0,$$ then $$||Q^{(\geq3)}_{\omega}(\psi) - Q^{(\geq3)}_{\oep}(\psi)||_{C^{0,\al}_{\delta-4}} \leq C||\varphi||_{C^{4,\al}_2}||\psi||^2_{C^{4,\al}_2}||\psi||_{C^{4,\al}_\delta}.$$
\end{lem}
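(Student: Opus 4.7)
The plan is to adapt the telescoping argument from the proof of Lemma \ref{nonlinear terms estimate}, now also tracking the dependence of $Q^{(k)}_\Omega(\psi)$ on the background $\Omega$. The starting structural observation is that, expanding $\s(\Omega + \ddbar\psi) = -g_\psi^{j\bar k}\partial_j\partial_{\bar k}\log\det(g_\Omega + \partial\bar\partial\psi)$ in powers of $\psi$, each $Q^{(k)}_\Omega(\psi)$ is a finite sum of monomials of the form
$$M_\Omega\cdot \nabla^{a_1}\psi\cdots\nabla^{a_k}\psi,\qquad a_i \leq 4,$$
where $M_\Omega$ is polynomial in $g_\Omega^{-1}$ and its partial derivatives up to some order $\ell\in\{0,1,2\}$, and the total derivative count satisfies $a_1+\cdots+a_k+\ell = 2(k+1)$. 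This identity reflects that $\s$ is a fourth-order operator, so each term in the $k$-th Taylor coefficient $Q^{(k)}_\Omega$ must carry exactly $2(k+1)$ derivatives distributed between $\psi$ and the background.

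I would then write $Q^{(k)}_\omega(\psi) - Q^{(k)}_{\oep}(\psi) = \sum (M_\omega - M_{\oep})\nabla^{a_1}\psi\cdots\nabla^{a_k}\psi$ and estimate each summand. Choose $c_0$ small enough that $g_{\oep}$ and $g_\omega = g_{\oep}+\partial\bar\partial\varphi$, together with their inverses, remain uniformly bounded in $C^{2,\al}_0$, as in the opening of the proof of Lemma \ref{nonlinear terms estimate}. Since
$$\|g_\omega^{-1}-g_{\oep}^{-1}\|_{C^{0,\al}_0} +\|\partial g_\omega-\partial g_{\oep}\|_{C^{0,\al}_{-1}} + \|\partial^2 g_\omega - \partial^2 g_{\oep}\|_{C^{0,\al}_{-2}} \leq C\|\varphi\|_{C^{4,\al}_2},$$
a telescoping in the polynomial $M$ gives $\|M_\omega - M_{\oep}\|_{C^{0,\al}_{-\ell}} \leq Ck\|\varphi\|_{C^{4,\al}_2}$, the factor $k$ arising from the polynomial degree in the metric entries. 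Assigning the $\delta$-weight to one $\psi$-factor and the $2$-weight to the remaining $k-1$, the weighted product inequality, combined with the identity $\sum a_i + \ell = 2(k+1)$, yields
$$\bigl\|(M_\omega - M_{\oep})\nabla^{a_1}\psi\cdots\nabla^{a_k}\psi\bigr\|_{C^{0,\al}_{\delta-4}} \leq Ck\|\varphi\|_{C^{4,\al}_2}\|\psi\|_{C^{4,\al}_2}^{k-1}\|\psi\|_{C^{4,\al}_\delta}.$$

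The number of such monomials grows polynomially in $k$ (as in Lemma \ref{nonlinear terms estimate}, one can bound it by $Ck^3$). Summing, and then summing over $k\geq 3$ with $\|\psi\|_{C^{4,\al}_2}<c_0$ small enough that $\sum_{k\geq 3}k^N c_0^{k-3}$ converges, extracts the factor $\|\psi\|_{C^{4,\al}_2}^2$ from the leading $k=3$ term and gives the claimed estimate. The main technical point to verify is the derivative-count identity $\sum a_i + \ell = 2(k+1)$ on every monomial in the expansion; this is what ensures that the weight deficit $-\ell$ in $M_\omega - M_{\oep}$ is exactly compensated by the $\ell$ fewer derivatives on the $\psi$-factors, so that the overall weighted norm lands at $\delta - 4$ uniformly in $\vep$.
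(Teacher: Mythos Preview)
Your argument is correct and follows essentially the same strategy as the paper. The only difference is packaging: the paper works directly with the summed quantity $Q_g^{(\geq 3)}(h)$, writing it schematically as $\sum_{j=0}^{2} g^{-1}\bigl[\partial^j(g^{-1}h)^3\bigr]F_j(g^{-1}h)$ for formal power series $F_j$ and then invoking the corresponding estimate in \cite{Sz15}, whereas you unpack $Q^{(k)}$ term by term, telescope the background coefficients $M_\Omega$, and sum over $k\geq 3$. Both routes rest on the same derivative-count identity and the same weighted product inequalities; your version simply makes explicit the bookkeeping that the paper delegates to the schematic form and the reference. One small point worth tightening in your write-up: for the decomposition $Q^{(k)}_\omega(\psi)-Q^{(k)}_{\oep}(\psi)=\sum(M_\omega-M_{\oep})\nabla^{a_1}\psi\cdots\nabla^{a_k}\psi$ to be literally true, the $\nabla^{a_i}$ must be coordinate partial derivatives (so that the $\psi$-factors are background-independent), which is consistent with your opening local formula for $\s$ but should be stated to avoid confusion with the covariant $\nabla$ used elsewhere.
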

\begin{proof}
Schematically, if $g$ is a metric and $h$ a small perturbation, $$Q_g^{(\geq 3)}(h) = \sum_{j=0}^{2}g^{-1}[\partial^j(g^{-1}h)^3]F_j(g^{-1}h),$$ where $F_j$ are formal power series. Note that the exponent three on $g^{-1}h$ in the sum above corresponds to the fact that we are only considering nonlinear terms of order higher than three. The proof is then identical to the one in \cite{Sz15}. The exponent of three results in the estimate depending on $||\psi||_{C^{4,\al}_2}^2$ instead of on $||\psi||_{C^{4,\al}_2}$ as in \cite{Sz15}. 
\end{proof}

\begin{remark}
Even though the results are stated for the metric $\oep$, they also hold on domains such as $M_p\setminus B_{r_\vep}$ with the metric $\omega$, and on domains of the type $B_{R_\vep}$ in $\bC^2$ with the metric $\eta$. The results also hold for small perturbations of $\oep$. 
\end{remark}

\subsection{Inverting the linearized operator} The following is Proposition 22 in \cite{Sz12}.
\begin{prop}\label{lin1}
Let $\delta \in (-1,0)$ such that $|\delta|<<1$. Then the operator 
\begin{align*}
\sF_1:C^{4,\al}_{-\delta}(\bM)^T\times \overline{\mathfrak{h}}&\rightarrow C^{0,\al}_{-\delta-4}(\bM)^T\\
(u,f)&\mapsto -\sD_{\oep}^*\sD_{\oep} u - \bl(f)
\end{align*}
has a right inverse $P_1$ with the bound $$||P_1|| \leq C\vep^{-\delta}.$$
\end{prop}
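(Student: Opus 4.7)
The plan is a parametrix-plus-Neumann-iteration argument exploiting the gluing structure of $\oep$. I construct an approximate right inverse to $\sF_1$ from the known right inverses of $\sD_\omega^*\sD_\omega$ on $(M_p,\omega)$ and $\sD_\eta^*\sD_\eta$ on $(\bC^2,\eta)$, then correct it by iteration.

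Both $\omega$ and $\eta$ are scalar flat on their respective model pieces, so the linearizations reduce to $-\sD^*\sD$. By Lemma \ref{lin M_p}, $\sD_\omega^*\sD_\omega : C^{4,\alpha}_{-\delta}(M_p)^T \to C^{0,\alpha}_{-\delta-4}(M_p)^T$ has a right inverse modulo the finite-dimensional kernel on the dual weight, which consists precisely of Hamiltonians $f \in \overline{\mathfrak{h}}$ vanishing at $p$. The term $\bl(f)$ in $\sF_1$ absorbs this obstruction: for any $v$, one chooses $f \in \overline{\mathfrak{h}}$ so that $v + \bl(f)$ is $L^2$-orthogonal to this kernel, and then inverts. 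On $\bC^2$, standard weighted theory for asymptotically locally Euclidean manifolds yields a right inverse $Q_\eta$ for $\sD_\eta^*\sD_\eta$ at weight $-\delta$, which is injective on $T$-invariant functions for $-\delta \in (0,1)$ avoiding the discrete set of indicial roots (automatic for $|\delta| \ll 1$).

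Given $v \in C^{0,\alpha}_{-\delta-4}(\bM)^T$, cut off $v = v_M + v_B$ with $v_M$ supported on $\{|z| \geq r_\vep\}$ and $v_B$ on $\{|z| \leq 2r_\vep\}$. Solve $-\sD_\omega^*\sD_\omega u_M - f = v_M$ on $M_p$ to obtain $u_M \in C^{4,\alpha}_{-\delta}$ and $f \in \overline{\mathfrak{h}}$. Rescale $v_B$ to $\bC^2$ via $w = z/\vep$, apply $Q_\eta$, and pull back to obtain $u_B$. Bookkeeping of how weighted norms transform under $z = \vep w$ for the order-four operator gives $\|u_B\|_{C^{4,\alpha}_{-\delta}(\bM)} \leq C \vep^{-\delta} \|v_B\|_{C^{0,\alpha}_{-\delta-4}(\bM)}$, which is the origin of the $\vep^{-\delta}$ factor. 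Assemble the parametrix $u = \chi_M u_M + \chi_B u_B$ using cutoffs supported in the neck $\{r \sim r_\vep\}$.

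Applying $\sF_1$ to this parametrix yields $v + E$, where $E$ collects (i) commutators of the cutoffs with $\sD_{\oep}^*\sD_{\oep}$ and (ii) the discrepancy between $\sD_{\oep}^*\sD_{\oep}$ and the model operators in the neck, controlled via Lemma \ref{linear terms estimate} together with the estimates $\omega - \oep = O(|z|^4)$ and $\vep^{-2}\oep - \eta = O(\vep^2)$ on the gluing region. A direct computation with the weight definitions then gives $\|E\|_{C^{0,\alpha}_{-\delta-4}} \leq C r_\vep^\kappa \|v\|_{C^{0,\alpha}_{-\delta-4}}$ for some $\kappa > 0$ when $|\delta|$ is sufficiently small; since $r_\vep = \vep^\alpha \to 0$, a Neumann series converges and produces the exact right inverse with the claimed bound. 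The main obstacle is precisely this weight bookkeeping: to ensure the error gains a positive power of $r_\vep$, one must balance cutoff-derivative losses against the decay of $u_M$ near $p$ and the growth of $u_B$ at infinity on $\bC^2$, which forces the restriction $|\delta| \ll 1$ and the exclusion of integer weights.
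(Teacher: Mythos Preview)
The paper does not supply its own proof here; it simply cites Proposition~22 of \cite{Sz12} and remarks that a blow-up/contradiction argument as in \cite[Theorem~8.14]{Szbook} or \cite{SySz16} would also suffice. Your parametrix-plus-Neumann-series construction is the other standard route to such estimates, and the overall architecture is reasonable, but the sketch has a genuine gap at the decisive step.

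The problem is the commutator error. When you apply $\sD_{\oep}^*\sD_{\oep}$ to $\chi_M u_M$, the term $[\sD_{\oep}^*\sD_{\oep},\chi_M]u_M$ is supported in the neck $\{r\sim r_\vep\}$ and is pointwise of size $\sum_{k=1}^4 r_\vep^{-k}|\nabla^{4-k}u_M|\lesssim r_\vep^{-\delta-4}\|u_M\|_{C^{4,\alpha}_{-\delta}}$. Measured in $C^{0,\alpha}_{-\delta-4}$ this contributes $r_\vep^{\delta+4}\cdot r_\vep^{-\delta-4}\|u_M\|=\|u_M\|$, i.e.\ no gain of a positive power of $r_\vep$; the same holds for $u_B$. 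Writing $[\sD,\chi_M]=-[\sD,\chi_B]$ only helps if $u_M-u_B$ is small in the neck, which requires explicitly matching their asymptotics (as in the Arezzo--Pacard boundary-matching scheme) rather than cutting off independently constructed solutions. Your final paragraph acknowledges this bookkeeping as ``the main obstacle'' but does not resolve it; the claimed bound $\|E\|\le Cr_\vep^\kappa\|v\|$ with $\kappa>0$ is exactly what needs a real argument. The blow-up proof the paper alludes to sidesteps this entirely: one takes a sequence violating the bound, normalizes, and extracts a limit on $M_p$ or $\bC^2$ that contradicts the known kernels there; the $\vep^{-\delta}$ emerges naturally from the normalization.

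Two small corrections that do not affect the strategy: $(M,\omega)$ is cscK, not scalar flat---but constancy of $\s_\omega$ already gives $\sL_\omega=-\sD_\omega^*\sD_\omega$ via \eqref{eqn:lich}, which is all you use. And on $\bC^2$ at weight $-\delta\in(0,1)$ the operator $\sD_\eta^*\sD_\eta$ is \emph{surjective} (trivial cokernel, since the dual-weight kernel in $(-1,0)$ is empty), not injective (constants lie in the kernel); surjectivity is what a right inverse needs.
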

Note that although the definition of lifts is slightly different in \cite{Sz12}, the same proof goes through. Alternatively, one could use a standard blow-up argument similar to the ones in \cite[Theorem 8.14]{Szbook} and \cite[pg.~ 11]{SySz16}. Notice that the bound for the inverse blows up (albeit mildly) as $\vep\rightarrow 0$. It is possible to obtain a uniform bound if we restrict the range to functions with vanishing integrals. The next proposition is stated for dimensions greater than two in \cite[Proposition 21]{Sz15}. But it is clear from the proof that the proposition also holds in dimension two. 

\begin{prop}\label{lin2}Let $\overline{\mathfrak{h}}_0$ denote the subspace of $\overline{\mathfrak{h}}$ whose lifts have zero means with respect to $\oep$, and let $C^{0,\al}_{-\delta-4}(\bM)^T_0$ denote the space of functions with zero mean. Then if $\delta\in (-1,0)$, the operator 
\begin{align*}
\sF_2:C^{4,\al}_{-\delta}(\bM)^T\times \overline{\mathfrak{h}}_0&\rightarrow C^{0,\al}_{-\delta-4}(\bM)^T_0\\
(u,f)&\mapsto -\sD_{\oep}^*\sD_{\oep} u - \bl(f)
\end{align*}
has a uniformly bounded right inverse.
\end{prop}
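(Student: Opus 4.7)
The approach is a three-scale blow-up by contradiction, following the proof of Proposition 21 in \cite{Sz15}; since that argument does not essentially use $m>2$, it transfers to our setting. If the claim fails, there exist $\vep_j\to 0$ and $(u_j,f_j)\in C^{4,\al}_{-\delta}(\bM)^T\times\overline{\mathfrak{h}}_0$ with $\|u_j\|_{C^{4,\al}_{-\delta}}+|f_j|=1$ and $\|\sF_2(u_j,f_j)\|_{C^{0,\al}_{-\delta-4}}\to 0$. Passing to a subsequence, $f_j\to f_\infty\in\overline{\mathfrak{h}}_0$.

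I would then extract two subsequential limits from the bounded sequence $u_j$: an outer limit $u_\infty\in C^{4,\al}_{-\delta}(M_p)^T$ solving $\sD^*_\omega\sD_\omega u_\infty=-f_\infty$, obtained from local $C^{4,\al}$-bounds on compact subsets of $M_p$; and an inner limit $\tilde u_\infty$ on $\bC^2$ obtained from the rescaling $\tilde u_j(w)=\vep_j^{\delta}u_j(\vep_j w)$, solving $\sD^*_\eta\sD_\eta\tilde u_\infty=0$ with growth at infinity no faster than $|w|^{-\delta}$. Pairing $\sD^*_\omega\sD_\omega u_\infty=-f_\infty$ against $f_\infty$ and integrating by parts on $M\setminus B_r$, the boundary contributions vanish as $r\to 0$ because $-\delta\in(0,1)$ makes $u_\infty$ grow strictly slower than $r^0$ at $p$; combined with $\sD^*_\omega\sD_\omega f_\infty=0$ this forces $f_\infty=0$, after which removable singularity and elliptic regularity give $u_\infty\in\overline{\mathfrak{h}}$. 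A Liouville-type argument on the Burns--Simanca space shows the only $T$-invariant, sublinearly growing element of the kernel of $\sD^*_\eta\sD_\eta$ is a constant, so $\tilde u_\infty\equiv c$.

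Matching the two expansions in the overlap region $\vep_j\ll|z|\ll 1$ forces $u_\infty(p)=0$ (otherwise $\vep_j^\delta u_j(\vep_j w)$ would diverge since $\delta<0$) and pins down the $|w|^{-\delta}$ tail of $\tilde u_\infty$ against the near-$p$ expansion of $u_\infty$. The crucial point, and the only place the zero-mean hypotheses $\bl(f_j)\in\overline{\mathfrak{h}}_0$ and $\sF_2(u_j,f_j)\in C^{0,\al}_{-\delta-4}(\bM)^T_0$ enter, is ruling out the remaining ambiguity: a nonzero constant $c$ at the inner scale, or a nontrivial $T$-Hamiltonian $u_\infty\in\overline{\mathfrak{h}}$ vanishing at $p$, are precisely the modes that survive as obstructions and generate the $\vep^{-\delta}$ loss present in Proposition \ref{lin1}; the zero-mean constraint forbids both. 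Once $c=0$ and $u_\infty=0$, a cut-off subtraction argument (with errors from replacing $\omega$ by $\oep$ controlled by Lemma \ref{linear terms estimate}) shows each of the three pieces of $\|u_j\|_{C^{4,\al}_{-\delta}}$ tends to zero, contradicting the normalization $\|u_j\|+|f_j|=1$.

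The main obstacle is the Liouville/matching step on Burns--Simanca combined with making the zero-mean condition do its job sharply. In dimension two the indicial behavior of $\sD^*_\eta\sD_\eta$ near infinity involves the logarithmic rate from the $\log|w|$ term in the Burns--Simanca K\"ahler potential, and the allowed weight $-\delta\in(0,1)$ sits exactly in the gap between constants and linearly growing harmonics; the verification that no nontrivial $T$-invariant kernel element of growth $|w|^{-\delta}$ survives the mean-zero constraint has to be done carefully, and is precisely the dimension-two analogue of the step that is automatic for $m>2$.
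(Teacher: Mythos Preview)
Your approach matches the paper's exactly: the paper gives no independent proof but simply cites \cite[Proposition 21]{Sz15} and remarks that the argument there, although stated for $m>2$, goes through unchanged in dimension two. Your proposal does the same, with a reasonable sketch of the three-scale blow-up argument from \cite{Sz15}; the only point to tighten is that, since the statement concerns a \emph{right} inverse, the contradiction sequence $(u_j,f_j)$ should be taken in a fixed complement to the kernel (equivalently, the image of the specific right inverse), so that the limiting conclusion $u_\infty\in\overline{\mathfrak{h}}$ genuinely forces $u_\infty=0$.
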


As noted in \cite[Remark 22]{Sz15}, both these results continue to hold for $\Omega = \oep + \ddbar u$, so long as $||u||_{C^{4,\al}_2}$ is sufficiently small.

\section{Approximate solutions}

\subsection{Modifying the metric on $M_p$.} The strategy is similar to \cite{Sz12,Sz15} with one notable complication. Cutting off the log-term in the definition of $\oep$ introduces a larger error than what is needed. Hence one needs to introduce a log term in $\omega$. For this, let $G$ be the solution to the equation 
\begin{equation}\label{eqn:G}
-\sD_\omega^*\sD_\omega G - h_1 = 2\pi^2\delta_p,
\end{equation} for some $h_1\in \overline{\mathfrak{h}}$. This is of course a Green's function for the operator $-\sD_\omega^*\sD_\omega$, with singularity at $p$. It then follows from general elliptic theory that $$G(z) =\log|z| + l(z) + \tilde G(z),$$ for some linear function $l(z)$ and some $\tilde G \in C^{4,\al}_{2-\tau}(M_p)^T$ for all $\tau>0$. We could normalize so that $l(p) = 0$, and write $$l(z) = b_\al z^\al +\overline{b_\al z^\al}.$$ As in \cite{Sz12}, by integrating, we see that $$h_1 = -2\pi^2(V^{-1} + \mu(p)),$$ where $V = Vol(M,\omega)$. To construct a better approximate solution we add $\ddbar\vep^2\ga_1G$ to $\oep$. The highest order error on $M\setminus B_1$ then comes from $Q_\omega(\vep^2G)$ which is of order $O(\vep^4)$. Since we need a slightly smaller error, we need to add an additional correction term. The idea is to use Lemma  \ref{lin M_p}.

\begin{lem}\label{order} With $G$ as above, $Q^{(2)}_\omega(G)\in C^{0,\al}_{-4-\tau}(M_p)$, for all $\tau>0$.
\end{lem}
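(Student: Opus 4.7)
The strategy is to exploit the asymptotic expansion $G = \log|z| + l(z) + \tilde G(z)$ near $p$, with $l$ linear and $\tilde G \in C^{4,\al}_{2-\tau}(M_p)$ for every $\tau>0$, together with the fact that $Q^{(2)}_\omega$ is a symmetric quadratic form in its argument. Since $G$ is smooth on $M_p$, the weighted-norm bound on $M\setminus B_{1/2}$ is automatic, and only the behavior in a small punctured ball around $p$ is at issue. Writing $B_\omega$ for the symmetric bilinear form polarising $Q^{(2)}_\omega$, I decompose
\[
Q^{(2)}_\omega(G) = Q^{(2)}_\omega(\log|z|) + 2B_\omega(\log|z|,\,l+\tilde G) + Q^{(2)}_\omega(l+\tilde G),
\]
and estimate the three pieces separately.

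Contributions from $l$ vanish: since $l$ is a real linear function of $(z,\bar z)$, $l_{\al\bbe} = 0$ and $\Delta_\omega l = 0$, so every term in the formula for $Q^{(2)}_\omega$ touching $l$ is zero. For the pure $\tilde G$ piece, the multiplicative estimates of Section 3 distribute four derivatives across two factors of $\tilde G \in C^{4,\al}_{2-\tau}$, landing the result in $C^{0,\al}_{-2\tau} \subset C^{0,\al}_{-4-\tau}$. The cross terms $B_\omega(\log|z|, \tilde G)$ pair derivatives of $\log|z|$ (of weight $-2$ or $-4$) with derivatives of $\tilde G$ (of weight between $-\tau$ and $-2-\tau$), so every summand has weight no greater than $-4-\tau$.

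The heart of the argument is the bound on $Q^{(2)}_\omega(\log|z|)$. The key observation is that for the Euclidean background $\omega_0$ and $\varphi = \log|z|$, a direct computation yields $\varphi_{\al\bbe}(\Delta_0\varphi)_{\be\bal} = -\tfrac14|z|^{-6}$ and $\tfrac12\Delta_0|\nabla\bar\nabla\varphi|^2 = +\tfrac14|z|^{-6}$, while the curvature term is zero since $R(\omega_0)=0$. Thus $Q^{(2)}_{\omega_0}(\log|z|) \equiv 0$ on $\CC^2\setminus\{0\}$. This is consistent with the scalar-flatness of the Burns-Simanca metric $\eta$ together with the identity $\sL_{\omega_0}(\log|z|) = -\Delta_0^2\log|z| = 0$; however, scalar-flatness alone only shows that the full nonlinear remainder vanishes and does not isolate the quadratic piece, so the pointwise calculation seems unavoidable.

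Finally, since $g_{i\bar j} = \delta_{ij} + O(|z|^2)$ in normal coordinates, replacing $\omega_0$ by $\omega$ in the formula for $Q^{(2)}$ introduces relative corrections of size $O(|z|^2)$ in the first two terms. Applied to individual summands of pointwise size $O(|z|^{-6})$ these yield errors of order $O(|z|^{-4})$, and the curvature term $R_{\al\bbe}\varphi_{\la\bal}\varphi_{\be\bar\la}$ is separately $O(|z|^{-4})$ since $R_{\al\bbe}$ is bounded near $p$. All told $Q^{(2)}_\omega(\log|z|) = O(|z|^{-4})$, with H\"older seminorm bounds obtained by differentiating the same expansions, giving membership in $C^{0,\al}_{-4-\tau}(M_p)$ for every $\tau>0$. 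The main obstacle is the pointwise cancellation of the $|z|^{-6}$ terms at the Euclidean level; it is the place where the dimension-two geometry of the Burns-Simanca model asserts itself, and as noted above does not follow merely from the scalar-flatness of $\eta$.
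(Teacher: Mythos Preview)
Your proof is correct and follows essentially the same strategy as the paper: both arguments reduce the question to showing that $Q^{(2)}_{\omega_0}(\log|z|)=0$ and then control the passage from $\omega_0$ to $\omega$. You expand $Q^{(2)}_\omega(G)$ bilinearly and estimate each piece by hand; the paper instead writes
\[
\|Q^{(2)}_\omega(G)\|_{C^{0,\al}_{-4-\tau}}\leq \|Q^{(2)}_\omega(G)-Q^{(2)}_\omega(G-\tilde G)\|_{C^{0,\al}_{-4-\tau}}+\|Q^{(2)}_\omega(G-\tilde G)-Q^{(2)}_0(G-\tilde G)\|_{C^{0,\al}_{-4-\tau}}
\]
and invokes the abstract multiplicative Lemmas of Section~3 together with $Q^{(2)}_0(G-\tilde G)=0$. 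The content is the same.

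One point of your commentary is incorrect, however. You assert that scalar-flatness of $\eta$ ``only shows that the full nonlinear remainder vanishes and does not isolate the quadratic piece, so the pointwise calculation seems unavoidable.'' In fact the paper avoids your direct computation precisely via scalar-flatness: since $\vep^2\eta=\omega_0+\vep^2\,\ddbar\log|z|$ is scalar-flat for \emph{every} $\vep>0$ (scaling a scalar-flat metric preserves scalar-flatness), and since $\s(\omega_0)=0$ and $\sL_{\omega_0}(\log|z|)=-\Delta_0^2\log|z|=0$, the expansion
\[
0=\s(\omega_0+\vep^2\,\ddbar\log|z|)=\sum_{k\geq 2}\vep^{2k}Q^{(k)}_{\omega_0}(\log|z|)
\]
forces each $Q^{(k)}_{\omega_0}(\log|z|)=0$ individually. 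This is exactly what the paper uses, and it is cleaner than the explicit cancellation of the $|z|^{-6}$ terms. Your direct computation is a fine alternative, but it is not unavoidable.
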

\begin{proof}
It is enough to show that the $C^{4,\al}_{-4-\tau}$ norm of  $Q^{(2)}_\omega(G)$ is bounded on $B_1\setminus\{p\}$ for all $\tau>0$. On this region, $$\vep^2\eta = \omega_{Euc} + \vep^2\ddbar(G - \tilde G).$$ Since $\vep^2\eta$ is scalar flat, it follows that $Q^{(2)}_0(G-\tilde G) = 0$. Then by Lemmas \ref{nonlinear terms estimate} and \ref{nonlinear estimate 2}, for any $\tau>0$, 
\begin{align*}
||Q^{(2)}_\omega(G)||_{C^{0,\al}_{-4-\tau}} &\leq ||Q^{(2)}_\omega(G) - Q_\omega^{(2)}(G-\tilde G)||_{C^{0,\al}_{-4-\tau}} + ||Q^{(2)}_\omega(G-\tilde G) - Q^{(2)}_0(G-\tilde G)||_{C^{0,\al}_{-4-\tau}}\\
&\leq C||G||_{C^{4,\al}_{2}}||\tilde G||_{C^{4,\al}_{-\tau}} + C||\varphi_4||_{C^{4,\al}_2}||G||_{C^{4,\al}_{2}}||G||_{C^{4,\al}_{-\tau}}\\
&\leq Cr^{\tau}(\log{r})^2\leq C.
\end{align*}

\end{proof}

\begin{prop}\label{residue} 
For every $f\in \overline{\mathfrak{h}}$ with $f(p) = 0$, $$\int_{M_p}[Q_\omega^{(2)}(G) - \mathrm{Re}(\langle \nabla h_1,\nabla G)]f~\frac{\omega^2}{2} = 4\pi ^2\mathrm{Re}(b_{\bal}f_\al(p)).$$
\end{prop}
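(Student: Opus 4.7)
The approach is to adapt the proof of Lemma \ref{orthogonal} to the singular Green's function $G$ by working on the truncated domain $M_\rho := M_p \setminus B_\rho$ and identifying the desired residue from the boundary terms on $\partial B_\rho$ as $\rho \to 0$.

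Following Lemma \ref{orthogonal}, on $M_\rho$ (where $G$ is smooth) consider the family $\omega_t = \omega + t\ddbar G$ together with the Hamiltonian $f_t = f + t\,\nabla f\cdot\nabla G$ of $\nabla f$ with respect to $\omega_t$. Since $\sD_{\omega_t}f_t = 0$, integration by parts gives
$$\int_{M_\rho} \sD^*_{\omega_t}\sD_{\omega_t} G \cdot f_t\,\omega_t^2 \;=\; B(t,\rho),$$
a surface integral on $\partial B_\rho$. Differentiating at $t=0$ using the identity
$$\tfrac{d}{dt}\Big|_{t=0}\sD^*_{\omega_t}\sD_{\omega_t}G \;=\; -2Q_\omega^{(2)}(G) - \nabla G\cdot\nabla(\sD^*_\omega\sD_\omega G) \;=\; -2Q_\omega^{(2)}(G) + \nabla G\cdot\nabla h_1$$
(from the proof of Lemma \ref{orthogonal}, together with $\sD^*_\omega\sD_\omega G = -h_1$ on $M_p$), along with $\tfrac{d}{dt}|_0 f_t = \nabla f\cdot\nabla G$ and $\tfrac{d}{dt}|_0 \omega_t^2 = \Delta G\,\omega^2$, yields three bulk integrals whose sum is $B'(0,\rho)$. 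Integrating the $h_1\,\mathrm{div}(f\nabla G)$ bulk term by parts once more then produces
$$\int_{M_\rho} f \bigl[Q^{(2)}_\omega(G) - \mathrm{Re}\langle\nabla h_1,\nabla G\rangle\bigr]\,\omega^2 \;=\; \tilde B(\rho),$$
for an explicit surface integral $\tilde B(\rho)$ on $\partial B_\rho$ combining $B'(0,\rho)$ with the boundary term from the final integration by parts.

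As $\rho\to 0$, the left side converges to $2\int_{M_p} fF\,\omega^2/2$ by the integrability estimates $|fQ^{(2)}_\omega(G)|\lesssim|z|^{-3-\tau}$ and $|f\,\nabla h_1\cdot\nabla G|\lesssim 1$ (using $|Q^{(2)}_\omega(G)|\lesssim|z|^{-4-\tau}$ from Lemma \ref{order}, $|\nabla G|\lesssim|z|^{-1}$, and $|f|\lesssim|z|$). It therefore suffices to show $\lim_{\rho\to 0}\tilde B(\rho) = 8\pi^2\,\mathrm{Re}(b_{\bal}f_\al(p))$. For this, I would substitute the expansion $G = \log|z| + l(z) + \tilde G(z)$ (with $l(z) = b_\al z^\al + \overline{b_\al z^\al}$ and $\tilde G \in C^{4,\al}_{2-\tau}$), the normal-coordinate expansion of $\omega$ from Lemma \ref{normal}, and $f(z) = f_\al(p)z^\al + \overline{f_\al(p)z^\al} + O(|z|^2)$ (using $f(p) = 0$) into $\tilde B(\rho)$ and integrate angularly on $\partial B_\rho \cong S^3$. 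Contributions involving only derivatives of the radial function $\log|z|$ paired with the odd-parity linear part of $f$ vanish by angular symmetry; contributions from the more regular $\tilde G$ vanish by power counting (using $\tilde G(p) = \nabla\tilde G(p) = 0$ since $\tilde G \in C^{4,\al}_{2-\tau}$); the finite residue comes from the cross-interaction between the highest-order derivatives of $\log|z|$ in $\tilde B(\rho)$ and the linear term $l(z)$ encoding $b_\al$, combined with $\nabla f(p)$. Using $\mathrm{vol}(S^3) = 2\pi^2$ and tracking the factors from the tensor structure of $\sD^*\sD$ produces the claimed constant.

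The main obstacle will be this residue calculation: the fourth-order nature of $\sD^*_\omega\sD_\omega$ combined with the $t$-differentiation generates a long list of candidate surface integrals on $\partial B_\rho$, and careful bookkeeping is needed to isolate the single term responsible for the finite residue and to verify the precise numerical constant $4\pi^2$. A useful consistency check is provided by the formal distributional identity $\sD^*_\omega\sD_\omega G = -h_1 - 2\pi^2\delta_p$: formally applying Lemma \ref{orthogonal} with $\varphi = G$ predicts that the residue should equal $2\pi^2 \int_{M_p} f\,\mathrm{Re}\langle\nabla\delta_p,\nabla G\rangle\,\omega^2$, which after regularization depends only on the linear part $l(z)$ of $G$ at $p$ (the log contribution being killed by the angular symmetry noted above), naturally producing the $\mathrm{Re}(b_{\bal}f_\al(p))$ structure and leaving only the numerical constant to be confirmed by the direct boundary computation.
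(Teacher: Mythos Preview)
Your approach is genuinely different from the paper's. The paper expands $Q^{(2)}_\omega(G)$ via the explicit formula \eqref{Q formula}, splits the integral into three pieces $I_1, I_2, I_3$, and performs a long sequence of integrations by parts on $M_\rho$, tracking every boundary contribution on $\partial B_\rho$ using the local expansions of $G$, $f$, and the metric. Many intermediate $\Delta f(p)$ terms appear and eventually cancel; the surviving residue $4\pi^2\mathrm{Re}(b_{\bal}f_\al(p))$ emerges from a single boundary integral involving $\nabla^\al G\,\nabla_{\bbe}\Delta G\,\nu^{\bbe}\nabla_\al f$, where the linear term $l(z)$ of $G$ pairs with $\nabla f(p)$. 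Your variational framing---adapting Lemma \ref{orthogonal} to the truncated domain and reading the residue off a boundary term $\tilde B(\rho)$---is conceptually appealing because it explains structurally why the bulk integral must localize to the boundary, and your heuristic for the form $\mathrm{Re}(b_{\bal}f_\al(p))$ is sound.

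The gap is that the decisive step, computing $\lim_{\rho\to 0}\tilde B(\rho)$, is not carried out, and it is not clear your route is shorter. The quantity $B(t,\rho)$ comes from integrating $\sD^*_{\omega_t}\sD_{\omega_t}G\cdot f_t$ by parts to $\langle \sD_{\omega_t}G, \sD_{\omega_t}f_t\rangle_{\omega_t}$, and so depends on $\omega_t$ through the connection, the inner product, the boundary measure, and the normal; differentiating at $t=0$ therefore generates a family of terms of comparable complexity to the paper's direct expansion. In particular, your parity argument only addresses pairings of $\partial^a\log|z|$ with the \emph{linear} part of $f$, whereas the pairings with the quadratic part of $f$ are exactly what produce the $\Delta f(p)$ contributions that the paper spends two pages tracking and cancelling. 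The variational identity guarantees these must cancel in your $\tilde B(\rho)$ as well, but verifying that cancellation and extracting the precise constant $4\pi^2$ still requires a boundary computation of roughly the same length as the paper's direct argument.
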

In the proof of the above proposition we will repeatedly use the following standard integration by parts formula.  
\begin{lem}
For a $(0,1)$ form $\al$, and a domain $U\subset M$, $$\int_U \mathrm{div}(\al)\,d\mu_\omega = \int_{\partial U}\al_{\bar j}\nu^{\bar j}\,d\sigma_\omega,  $$ where $\mathrm{div}(\al) :=(\al_{\bar j})_{;j}$, $\nu$ is the internal normal vector, and $d\sigma_\omega = i_\nu d\mu_\omega$ is the corresponding surface measure.
\end{lem}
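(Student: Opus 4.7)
The plan is to apply Stokes' theorem to the $(2m-1)$-form $\beta := \alpha \wedge \omega^{m-1}/(m-1)!$, which for $m=2$ is simply $\alpha \wedge \omega$. Then $\int_U d\beta = \int_{\partial U}\beta$, and the task is to match each side with the corresponding side of the claimed identity.

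For the left side, $d\omega = 0$ gives $d\beta = d\alpha \wedge \omega^{m-1}/(m-1)!$. The $\dbar\alpha$ contribution produces a form of type $(m-1, m+1)$ and hence vanishes, so only $\partial\alpha$ contributes. Using the standard identity that a $(1,1)$-form $\gamma$ satisfies $\gamma \wedge \omega^{m-1}/(m-1)! = c\,(\mathrm{tr}_\omega \gamma)\, d\mu_\omega$ for a convention-dependent constant $c$, together with the Kähler identity $\partial_j\alpha_{\bar k} = \nabla_j\alpha_{\bar k}$ (mixed Christoffels vanish), one obtains $d\beta = c\,\mathrm{div}(\alpha)\,d\mu_\omega$ for the same constant $c$.

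For the right side, let $X$ be the $(1,0)$-vector field dual to $\alpha$, with $X^i = g^{i\bar j}\alpha_{\bar j}$. A direct contraction shows $\iota_X\omega$ equals $\alpha$ up to a scalar; combined with the elementary identity $\iota_X(\omega^m/m!) = \iota_X\omega \wedge \omega^{m-1}/(m-1)!$, this identifies $\beta$ with a scalar multiple of $\iota_X\, d\mu_\omega$. In a coordinate frame near $\partial U$ with $\partial_t = \nu$, one has $\iota_V\, d\mu_\omega|_{\partial U} = \langle V, \nu\rangle\, d\sigma_\omega$ for any vector field $V$, where $\langle\cdot,\cdot\rangle$ is the natural bilinear extension of the Riemannian inner product (the usual decomposition $V = \langle V,\nu\rangle \nu + V_{\tan}$ combined with the fact that $\iota_{V_{\tan}}d\mu_\omega$ restricted to $\partial U$ vanishes for dimensional reasons). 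Applied to $V = X$ and using the Kähler property, $\langle X, \nu\rangle$ is a scalar multiple of $\alpha_{\bar k}\nu^{\bar k}$, with the scalar matching the one from the left-side computation once both are tracked through the same convention for $\omega$.

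Applying Stokes' theorem, the common prefactors cancel and the lemma follows. I do not anticipate a serious conceptual obstacle: the entire argument is formal once $d\omega = 0$ is invoked and standard Kähler exterior-algebra identities are used pointwise. The only fiddly point is consistent bookkeeping of the various $\sqrt{-1}$-factors, together with the sign arising from the use of internal (rather than outward) normal in the definition of $d\sigma_\omega$; these affect both sides identically and cancel in the final identity.
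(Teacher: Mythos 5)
The paper does not prove this lemma; it is asserted as a ``standard integration by parts formula'' and used immediately in the proof of Proposition \ref{residue}. So there is no argument in the paper to compare against, and the review can only assess your proof on its own.

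Your Stokes-theorem argument on $\beta = \alpha\wedge\omega^{m-1}/(m-1)!$ is the natural and correct route, and the structural steps are all sound: $\dbar\alpha\wedge\omega^{m-1}$ vanishes by bidegree, the K\"ahler condition kills the mixed Christoffel symbols so that $g^{j\bar k}\d_j\alpha_{\bar k}=\mathrm{div}(\alpha)$, and the identity $\iota_X\omega=\sqrt{-1}\,\alpha$ with $X^j=g^{j\bar k}\alpha_{\bar k}$ converts $\beta$ into $(\sqrt{-1})^{-1}\iota_X\,d\mu_\omega$, whose restriction to $\partial U$ is $(\sqrt{-1})^{-1}\alpha_{\bar j}\nu^{\bar j}\,d\sigma_\omega$. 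Since the same prefactor $(\sqrt{-1})^{-1}$ appears on $d\beta=(\sqrt{-1})^{-1}\mathrm{div}(\alpha)\,d\mu_\omega$, Stokes gives the claim. The one place where the writing is imprecise is the final remark that the internal-normal sign ``affects both sides identically'': in fact it touches only the right-hand side, where $\nu\mapsto-\nu$ flips both $\nu^{\bar j}$ and $d\sigma_\omega=\iota_\nu d\mu_\omega$, so the product $\alpha_{\bar j}\nu^{\bar j}\,d\sigma_\omega$ is unchanged. This cancellation does occur, but not for the reason stated, and I would encourage you to actually carry out the constant bookkeeping you defer to rather than assert that it works -- the sign and normalization conventions here (internal normal, the paper's $g_{\al\bar\be}=\tfrac12\delta_{\al\be}+O(|z|^2)$ normalization, and the interpretation of $d\sigma_\omega$ as a form rather than an unsigned measure) are exactly the sort of thing that derails later residue computations if left implicit.
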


It is not difficult to see that the residue should be a combination of a term involving $\Delta f(p)$ and $b_{\bal}\nabla^{\bal}f(p)$. The crucial point is that the $\Delta f(p)$ term cancels out.

\begin{proof}[Proof of Proposition]
For $\rho \in (0,1)$, let $B_\rho =\{|z|<\rho\}$, and $M_\rho = M\setminus\overline{B_\rho}$. We denote any term that approaches zero as $\rho\rightarrow 0$ by $\vep(\rho)$. We also denote the unit outward normal to $B_\rho$ by $\nu$, and the induced measure on $\partial B_\rho$ by $d\sigma_\rho$, so that $$d\sigma_\rho = i_\nu\frac{\omega^2}{2}\Big|_{\partial B_\rho}.$$ Note that in normal coordinates 
\begin{align*}
g_{\al\bbe} &= \frac{\delta_{\al\bbe}}{2} + O(|z|^2)\\
\nu &= \frac{z^\al}{|z|}\frac{\partial}{\partial z^\al}+\frac{z^{\bbe}}{|z|}\frac{\partial}{\partial z^{\bbe}},\\
d\sigma_\rho &= \rho^3(1+O(\rho^2))d\theta,
\end{align*} where $d\theta$ is the standard Lebesgue measure on $\mathbb{S}^3$.
Since $Q_\omega^{(2)}(G) \in C^{0,\alpha}_{-4}(M_p)$, it is enough to compute $$\lim_{\rho\rightarrow 0^+}\int_{M_\rho}[Q_\omega^{(2)}(G) - \mathrm{Re}(\langle\nabla h,\nabla G\rangle)]f\frac{\omega^2}{2!}.$$ From equation \eqref{Q formula}, we can write $$2I(\rho) := \int_{M_\rho}2Q_\omega^{(2)}(G)f\frac{\omega^2}{2} = 2I_1(\rho) + I_2(\rho) + 2I_3(\rho),$$ where 
\begin{align*}
I_1(\rho) &= \int_{M_\rho} \nabla^\al\nabla^{\bbe}G\nabla_\al\nabla_{\bbe}\Delta G \cdot f,\\
I_2(\rho) &=\int_{M_\rho}\Delta|\nabla\bar\nabla G|^2\cdot f,\\
I_3(\rho) &= \int_{M_\rho}R_{\al\bbe}\nabla^\al\nabla^\lambda G\nabla_\la\nabla^{\bbe} G\cdot f.
\end{align*}
Note that all the three integrals above  are real valued. For the first integral, integrating by parts $$I_1(\rho) = -\int_{M_\rho}\nabla^{\bbe}\Delta G \nabla_{\bbe}\Delta G \cdot f - \int_{M_\rho}\nabla^{\al}\nabla^{\bbe}G\nabla_{\bbe}\Delta G\nabla_\al f + \int_{\partial B_\rho}\nabla_{\bar\mu}\nabla^{\bbe}G\nabla_{\bbe}\Delta G\nu^{\bar\mu}f.$$ The Hessian of $G$ has the expansion $$\nabla_{\al}\nabla_{\bbe} G = \frac{1}{2}\Big(\frac{\delta_{\al\bbe}}{|z|^2} - \frac{\overline{z}^{\al}z^\beta}{|z|^4}\Big) + O(|z|^{-\tau}).$$
From this it can be seen easily that $$\nabla_{\bar\mu}\nabla_{\al}G\nabla^{\al}\Delta G\nu^{\bar\mu} = O(|z|^{-3}).$$ That is, the $O(|z|^{-5})$ term cancels out, and since $f = O(|z|)$, the boundary has only an $\vep(\rho)$ contribution, and $$I_1(\rho) = -\int_{M_\rho}\nabla^{\bbe}\Delta G \nabla_{\bbe}\Delta G \cdot f - \int_{M_\rho}\nabla^{\al}\nabla^{\bbe}G\nabla_{\bbe}\Delta G\nabla_\al f + \vep(\rho).$$
Integrating the first term by parts, $$-\int_{M_\rho}\nabla^{\bbe}\Delta G \nabla_{\bbe}\Delta G \cdot f = \int_{M_\rho}\Delta G \Delta^2 G \cdot f  + \int_{M_\rho}\Delta G \nabla_{\bbe}\Delta G \nabla^{\bbe}f - \int_{\partial B_\rho}\Delta G\nabla_{\bbe}\Delta G \nu^{\bbe}f.$$ For the boundary term, we notice that 
\begin{align*}
\Delta G &= |z|^{-2} + O(|z|^{-\tau})\\
\nabla_{\bbe}\Delta G &= -\frac{z^{\be}}{|z|^4} + O(|z|^{-2-\tau})\\
\nu^{\bbe} &= \frac{\bar z^\beta}{|z|},
\end{align*}
and so $$\Delta G\nabla_{\bbe}\Delta G \nu^{\bbe} = -|z|^{-5} + O(|z|^{-3}).$$ Now $f$ has the expansion $$f = f_{j}(p)z^j + f_{\bar j}(p)\bar z^j + \mathrm{Re}(f_{ij}(p)z^iz^j) + f_{i\bar j}(p)z^i\bar z^j + O(|z|^3).$$In fact there will be no $f_{ij}$ terms since $f$ is the potential of a holomorphic vector field. Here, and in all subsequent calculations, we will use the Einstein summation convention. Using the fact that linear functions and quadratic functions of the form $z^iz^j$ or $\bar z^i\bar z^j$ integrate out to zero on $\mathbb{S}^3$ and the following formula $$\int_{\mathbb{S}^3}z^{\al}\bar z^\be = \begin{cases} \pi^2,~ \al = \be \\ 0,~ \text{otherwise} \end{cases},$$ it is easy to see that $$\int_{\partial B_\rho}\Delta G\nabla_{\bbe}\Delta G \nu^{\bbe}f = -\frac{\pi^2}{2}\Delta f(p) + \vep(\rho),$$ and so $$I_1 = \int_{M_\rho}\Delta G \Delta^2 G \cdot f  + \int_{M_\rho}\Delta G \nabla_{\bbe}\Delta G \nabla^{\bbe}f - \int_{M_\rho}\nabla^{\al}\nabla^{\bbe}G\nabla_{\bbe}\Delta G\nabla_\al f  +\frac{\pi^2}{2}\Delta f(p) + \vep(\rho). $$ Next, we focus on the second term above. Again integrating by parts, and computing the boundary term as above,
\begin{align*}
\int_{M_\rho} \Delta G \nabla_{\bbe}\Delta G \nabla^{\bbe}f &= -\int _{M_\rho}\Delta G \nabla_{\bbe}\Delta G \nabla^{\bbe}f - \int_{M_\rho}(\Delta G)^2\Delta f + \int_{\partial B_\rho} (\Delta G)^2 \nabla_{\al}f \nu^{\al}\\
&=-\int _{M_\rho}\Delta G \nabla_{\bbe}\Delta G \nabla^{\bbe}f - \int_{M_\rho}(\Delta G)^2\Delta f + \frac{\pi^2}{2}\Delta f(p) + \vep(\rho),
\end{align*}
 and so $$\int_{M_\rho} \Delta G \nabla_{\bbe}\Delta G \nabla^{\bbe}f = -\frac{1}{2} \int_{M_\rho}(\Delta G)^2\Delta f + \frac{\pi^2}{4}\Delta f(p) + \vep(\rho).$$

Combining with the above formulae, we obtain that  $$I_1 =  \int_{M_\rho}\Delta G \Delta^2 G \cdot f-\frac{1}{2} \int_{M_\rho}(\Delta G)^2\Delta f - \int_{M_\rho}\nabla^{\al}\nabla^{\bbe}G\nabla_{\bbe}\Delta G\nabla_\al f  +\frac{3\pi^2}{4}\Delta f(p) + \vep(\rho).$$ For the third integral, $$ - \int_{M_\rho}\nabla^{\al}\nabla^{\bbe}G\nabla_{\bbe}\Delta G\nabla_\al f = \int_{M_\rho}\nabla^\al G\Delta^2 G \nabla_\al f - \int_{\partial B_\rho}\nabla^\al G\nabla_{\bbe}\Delta G\nu^{\bbe}\nabla_{\al}f.$$ To compute the boundary term, we note the following expansions
\begin{align*}
\nabla^\al G &= \frac{z^\al}{|z|^2} + 2b_{\bal} + O(|z|)\\
\nabla_{\bbe}\Delta G &= -\frac{z^{\be}}{|z|^4} + O(|z|^{-1-\tau})\\
\nu^{\bbe} &= \frac{\bar z^{\be}}{|z|}\\
\nabla_\al f &= f_\al(p) + f_{\al\la}(p)z^\la + f_{\al\bar\nu}\bar z^\nu.
\end{align*} 
The only terms that will contribute a non-zero boundary integrals are $$-\frac{f_{\al\bar\nu}(p)z^\al\bar z^\nu}{|z|^5} - \frac{2b_\al f_{\bal}(p)}{|z|^3}.$$ Integrating, and using the fact that $|\mathbb{S}| = 2\pi^2$, we obtain that $$- \int_{\partial B_\rho}\nabla^\al G\nabla_{\bbe}\Delta G\nu^{\bbe}\nabla_{\al}f = \frac{\pi^2}{2}\Delta f(p) + 4\pi^2b_\al f_{\bal}(p) +\vep(\rho),$$ and so 
\begin{align*}
I_1(\rho) =  \int_{M_\rho}\Delta G \Delta^2 G \cdot f &+ \int_{M_\rho}\nabla^\al G\Delta^2 G \nabla_\al f  -\frac{1}{2} \int_{M_\rho}(\Delta G)^2\Delta f  \\
& +\frac{5\pi^2}{4}\Delta f(p) +4\pi^2b_\al f_{\bal}(p)+ \vep(\rho),
\end{align*}
For the first two terms since $\Delta^2G$ is only of the order $O(|z|^{-2})$, we can freely integrate by parts, and we finally obtain (after taking the real part)
\begin{align}\label{I_1}
I_1(\rho) &= - \mathrm{Re}\Big(\int_{M_\rho}\nabla^\al G \nabla_\al\Delta^2 G \cdot f \Big)  -\frac{1}{2} \int_{M_\rho}(\Delta G)^2\Delta f   \\\nonumber
&+\frac{5\pi^2}{4}\Delta f(p) +4\pi^2\mathrm{Re}(b_\al f_{\bal}(p))+ \vep(\rho).
\end{align}

Next we move on to the second integral. $$I_2(\rho) = \int_{M_\rho}|\nabla\overline\nabla G|^2 \Delta f + \frac{1}{2}\int_{\partial B_\rho}\partial_\nu|\nabla\overline\nabla G|^2 \cdot f - \frac{1}{2}\int_{\partial B_\rho}|\nabla\overline\nabla G|^2 \partial_\nu f.$$
Note that the half in the boundary terms is due to the fact that the complex Laplacian is half that of the real Laplacian. We then compute the boundary terms. Note that $$|\nabla\overline\nabla G|^2 = |z|^{-4} + O(|z|^{-2}),$$ and so $$\frac{1}{2}\int_{\partial B_\rho}\partial_\nu|\nabla\overline\nabla G|^2 \cdot f  = -\pi^2\Delta f(p) + \vep(\rho),$$ and  
\begin{align*}
\frac{1}{2}\int_{\partial B_\rho}|\nabla\overline\nabla G|^2 \partial_\nu f &= \frac{\rho^{-4}}{2}\int_{\partial B_\rho}\partial_\nu f +\vep(\rho)\\
&=\rho^{-4}\int_{B_\rho}\Delta f +\vep(\rho)\\
&= \omega_4\Delta f(\rho) =\frac{\pi^2}{2}\Delta f(p).
\end{align*}
Here $\omega_4$ denotes the volume of the unit ball in $\CC^2$. Combining, we get that 
\begin{equation}\label{eqn:I2-1}
I_2(\rho) = \int_{M_\rho}|\nabla\overline\nabla G|^2 \Delta f -\frac{3\pi^2}{2}\Delta f(p) + \vep(\rho).
\end{equation}  For the first integral, $$\int_{M_\rho}|\nabla\overline\nabla G|^2 \Delta f = -\int_{M_\rho}\nabla_\al\Delta G \nabla^\al G\Delta f - \int_{M_\rho}\nabla_\al\nabla_{\bbe}G \nabla^\al G \nabla^{\bbe}\Delta f + \int_{\partial B_\rho}\nabla_\al\nabla_{\bbe}G\nabla^\al G\nu^{\bbe}\Delta f.$$ It is not difficult to see that $$\nabla_\al\nabla_{\bbe}G\nabla^\al G\nu^{\bbe} = O(|z|^{-2}),$$ since $O(|z|^{-3})$ contribution cancels out. So the boundary term will not contribute any residue and we have that $$\int_{M_\rho}|\nabla\overline\nabla G|^2 \Delta f = -\int_{M_\rho}\nabla_\al\Delta G \nabla^\al G\Delta f - \int_{M_\rho}\nabla_\al\nabla_{\bbe}G \nabla^\al G \nabla^{\bbe}\Delta f  + \vep(\rho).$$ For the first integral above, 
\begin{align*}
\int_{M_\rho}\nabla_\al\Delta G \nabla^\al G\Delta f &=- \int_{M_\rho}(\Delta G)^2\Delta f - \int_{M_\rho}\Delta G \nabla^\al G \nabla_\al\Delta f + \int_{\partial B_\rho}\Delta G \nabla_{\bbe}G\nu^{\bbe}\Delta f \\
&=- \int_{M_\rho}(\Delta G)^2\Delta f - \int_{M_\rho}\Delta G \nabla^\al G \nabla_\al\Delta f  + \pi^2\Delta f(p) + \vep(\rho), 
\end{align*} 
and so 
\begin{align*}
\int_{M_\rho}|\nabla\overline\nabla G|^2 \Delta f = \int_{M_\rho}(\Delta G)^2\Delta f + \int_{M_\rho}\Delta G \nabla^\al G \nabla_\al\Delta f &- \int_{M_\rho}\nabla^\al\nabla^{\bbe}G \nabla_{\bbe} G \nabla_{\al}\Delta f  \\
&- \pi^2\Delta f(p)+ \vep(\rho).
\end{align*}
Commuting the derivatives, and using the fact that $f$ is the Hamiltonian of a holomorphic vector field we see that $$\nabla_\al\Delta f = -R_{\al\bar\mu}\nabla^{\bar \mu}f, $$ and so we can write 
\begin{equation}\label{eqn:I2-2}
\int_{M_\rho}|\nabla\overline\nabla G|^2 \Delta f  = \int_{M_\rho}(\Delta G)^2\Delta f + J_1+J_2 -\pi^2\Delta f(p) + \vep(\rho),
\end{equation} where 
$$J_1 = -\int_{M_\rho}\Delta G \nabla^\al G~R_{\al\bar\mu}\nabla^{\bar \mu}f,~J_2 = \int_{M_\rho}\nabla^\al\nabla^{\bbe}G \nabla_{\bbe} GR_{\al\bar\mu}\nabla^{\bar\mu}f.$$ 
Integrating by parts, using the Bianchi identity and the fact that $\omega$ has constant scalar curvature, we obtain that $$J_2 = -I_3(\rho) - \int_{M_\rho}\nabla_{\bbe} G\nabla^{\bar\mu}\nabla^\al \nabla^{\bbe} G R_{\al\bar\mu}f + \int_{\partial B_\rho}\nabla^\al G\nabla_\al\nabla^\tau GR_{\tau\bar\mu}\nu^{\bar\mu}f.$$  It is easy to see that the boundary term is $\vep(\rho)$ since the integrand if $O(|z|^{-2})$, and so $$J_2 = -I_3(\rho) -\int_{M_\rho}\nabla_{\bbe} G\nabla^{\bar\mu}\nabla^\al \nabla^{\bbe} G R_{\al\bar\mu}f +\vep(\rho).$$ 
We now turn our attention to $J_1$. Integrating by parts we obtain
$$J_1 = \int_{M_\rho}\nabla^{\bar\mu}\Delta G \nabla^\al G R_{\al\bar\mu}f + \int_{M_\rho}\Delta G(R_{\al\bar\mu}\nabla^\al\nabla^{\bar \mu}G)f -\int_{\partial B_\rho}\Delta G \nabla^{\al}G R_{\al\bar\mu}\nu^{\bar \mu}f.$$ Once again it is easy to see that the boundary term is $\vep(\rho)$. In fact, since all the subsequent boundary terms will be of a similar order to the above term, we will not bother writing them out and will simply label them as $\vep(\rho)$. Writing $\Delta G = \nabla_\tau\nabla^\tau G$, and integrating the first term by parts we obtain, 
\begin{align*}
J_1 &= -I_3(\rho) - \int_{M_\rho}\nabla^{\bar\mu}\nabla^\tau G \nabla^\al G\nabla_{\tau}R_{\al\bar\mu}f \\
&- \int_{M_\rho}\nabla^{\bar\mu}\nabla^\tau G \nabla^\al GR_{\al\bar\mu}\nabla_\tau f + \int_{M_\rho}\Delta G(R_{\al\bar\mu}\nabla^\al\nabla^{\bar \mu}G)f + \vep(\rho)\\
&= -I_3(\rho) - \int_{M_\rho}\nabla^{\bar\mu}\nabla^\tau G \nabla^\al G\nabla_{\tau}R_{\al\bar\mu}f + \int_{M_\rho}(R_{\al\bar\mu}\nabla^\al\nabla^{\bar \mu}G)(f\Delta G + \nabla^\tau G\nabla_\tau f) + \vep(\rho) \\
&=-I_3(\rho) - \int_{M_\rho}\nabla^{\bar\mu}\nabla^\tau G \nabla^\al G\nabla_{\tau}R_{\al\bar\mu}f -  \int_{M_\rho}\nabla^\tau G\nabla_\tau(R_{\al\bar\mu}\nabla^\al\nabla^{\bar \mu}G) + \vep(\rho).
\end{align*}
For the second term above, using the fact that $\nabla_\tau R_{\al\bar\mu} = \nabla_\al R_{\tau\bar\mu}$,
\begin{align*}
- \int_{M_\rho}\nabla^{\bar\mu}\nabla^\tau G \nabla^\al G\nabla_{\tau}R_{\al\bar\mu}f &= -I_3(\rho) - J_2 + \int_{M_\rho}(R_{\tau\bar\mu}\nabla^\tau\nabla^{\bar\mu}G)(f\Delta G + \nabla^\al G\nabla_\al f) + \vep(\rho)\\
&= -I_3(\rho) - J_2 - \int_{M_\rho}\nabla^\al G\nabla_\al(R_{\tau\bar\mu}\nabla^\tau\nabla^{\bar\mu}G)f +\vep(\rho), 
\end{align*}
and so $$J_1+J_2 = -2I_3(\rho)-2 \int_{M_\rho}\nabla^\al G\nabla_\al(R_{\tau\bar\mu}\nabla^\tau\nabla^{\bar\mu}G)f + \vep(\rho).$$
Combining with \eqref{eqn:I2-1} and \eqref{eqn:I2-2}, we see that $$I_2(\rho) + 2I_3(\rho) =  \int_{M_\rho}(\Delta G)^2\Delta f -2 \int_{M_\rho}\nabla^\al G\nabla_\al(R_{\tau\bar\mu}\nabla^\tau\nabla^{\bar\mu}G)f  -\frac{5\pi^2}{2}\Delta f(p) + \vep(\rho).$$ Taking the real part, dividing by two and combining with \eqref{I_1} $$I_1(\rho)+\frac{1}{2}I_2(\rho) +I_3(\rho) = - \mathrm{Re}\Big(\int_{M_\rho} \nabla_\al(\sD_\omega^*\sD_\omega G)\nabla^\al G \cdot f \Big) + 4\pi^2\mathrm{Re}(b_\al f_{\bal}(p))+ \vep(\rho),$$ where we used the fact that $$\sD_\omega^*\sD_\omega G = \Delta^2 G + R_{\al\bar\mu}\nabla^\al\nabla^{\bar\mu}G.$$ Finally using equation \eqref{eqn:G} and letting $\rho\rightarrow 0$, we complete the proof of the Lemma.


\end{proof}

Note that if there was a $U(2)$-action fixing $p$, the Green's function would have no linear terms (as for instance is the case in \cite{GV16}), and the residue would trivially be zero. Since the $T$ in the main theorem could be $S^1$, we need to work a bit more.

\begin{lem}\label{modifying Green}
If there is a non-trivial $S^1$ action fixing $p$, possibly after modifying $G$ by a holomorphy potential,  the residue $$\int_{M_p}[Q_\omega^{(2)}(G) - \mathrm{Re}(\langle \nabla h_1,\nabla G)]f~\frac{\omega^2}{2} = 0 $$for every $f\in \overline{\mathfrak{h}}$ with $f(p) = 0$,
\end{lem}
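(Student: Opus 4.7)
My approach will be to exploit the $T$-invariance of $G$ to show that the linear coefficient vector $b$ lies in the same subspace of $(T_p^{1,0} M)^*$ as the allowed values of $f_\alpha(p)$, and then use an element of $\overline{\mathfrak{h}}$ to cancel precisely the component of $b$ along that subspace. Note that replacing $G$ by $G + h'$ for $h' \in \overline{\mathfrak{h}}$ still yields a Green's function for the same equation (since $-\sD_\omega^*\sD_\omega h' = 0$ and $h' \in \overline{\mathfrak{h}}$ is smooth), and it shifts the linear coefficient from $b_\alpha$ to $b_\alpha + \partial_\alpha h'(p)$. So the task reduces to producing an $h' \in \overline{\mathfrak{h}}$ that kills the relevant part of $b$.

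To begin, I would arrange $G$ to be $T$-invariant. The right hand side of \eqref{eqn:G} is $T$-invariant: $\delta_p$ because $p$ is fixed, and $h_1 = -2\pi^2(V^{-1} + \mu(p))$ because $\mu(p)$ comes from a Hamiltonian associated to $\mathfrak{t}$ evaluated at the fixed point. Averaging any solution over $T$ therefore produces a $T$-invariant Green's function with the same singularity at $p$. In unitary coordinates diagonalizing the $S^1\subseteq T$ action at $p$ with weights $(a_1,a_2)$, the $T$-invariance of the linear part $l(z) = b_\alpha z^\alpha + \overline{b_\alpha z^\alpha}$ then forces $b_\alpha = 0$ for every $\alpha$ with $a_\alpha \neq 0$. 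Let $V \subseteq (T_p^{1,0}M)^*$ denote the $T$-fixed subspace; we have $b \in V$. By exactly the same reasoning, every $f \in \overline{\mathfrak{h}}$ is $T$-invariant, and hence $f_\alpha(p) \in V$; likewise $\partial_\alpha h'(p) \in V$ for every $h' \in \overline{\mathfrak{h}}$.

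Next I would verify that the space of achievable modifications $W = \{\partial_\alpha h'(p) : h' \in \overline{\mathfrak{h}}\}$ coincides with the space of test values $U = \{f_\alpha(p) : f \in \overline{\mathfrak{h}},\ f(p) = 0\}$. This is essentially immediate: since constants lie in $\overline{\mathfrak{h}}$, any $h' \in \overline{\mathfrak{h}}$ can be replaced by $h' - h'(p) \in \overline{\mathfrak{h}}$, which vanishes at $p$ and has the same derivative. Viewing $(\beta,\gamma) \mapsto \mathrm{Re}(\bar\beta_\alpha \gamma_\alpha)$ as the standard real inner product on $V \subseteq \mathbb{C}^2 \cong \mathbb{R}^4$, we have the orthogonal decomposition $b = w + w^\perp$ with $w \in U = W$ and $w^\perp \in V \cap U^\perp$. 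Choosing $h' \in \overline{\mathfrak{h}}$ with $\partial_\alpha h'(p) = -w$ then replaces $b$ by $w^\perp$, and since $w^\perp$ pairs trivially with every element of $U$, the residue of Proposition \ref{residue} vanishes for every $f \in \overline{\mathfrak{h}}$ with $f(p) = 0$.

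The main obstacle is really just verifying the two structural facts $b \in V$ and $W = U$; once the $T$-equivariance of the Green's function and the normalization of coordinates are set up, the rest is a finite-dimensional orthogonal-projection argument. A minor subtlety worth checking is that the argument handles the rank-one case $T = S^1$ (where $V$ can be one complex-dimensional and $b \in V$ may still have a nonzero component) as well as the case in which $T$ acts with no nonzero fixed vectors at $p$ (where $V = 0$, $b = 0$, and the residue is trivially zero without any modification).
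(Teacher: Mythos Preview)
Your argument is correct and follows the same line as the paper's: both use $T$-invariance to force the linear coefficient $b$ into the fixed subspace $V$, then subtract an element of $\overline{\mathfrak{h}}$ to neutralize the part of $b$ that pairs against the test derivatives $f_\alpha(p)$. The paper carries this out by an explicit case split on the $S^1$-weights and, in the nontrivial case, subtracts $bB^{-1}h_\xi$ for a single $\xi\in\mathfrak{h}$ with $\xi(p)=B\,\partial_{z_2}\neq 0$ to kill the linear term outright; your orthogonal-projection formulation is a slightly cleaner execution, since it uses only real combinations from $\overline{\mathfrak{h}}$ (so the modified $\tilde G$ stays real) and only removes the $U$-component of $b$, leaving a residual $w^\perp$ that pairs trivially with every $f_\alpha(p)$---which is all the lemma actually requires.
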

\begin{proof}
Note that the $S^1$ action is linear and unitary in our coordinates, and hence can be diagonalized. If the $S^1$ acts on both factors $(z_1,z_2)$ non-trivially, then there would be no invariant linear functions, and the Green's function locally would have no linear term. The residue is then trivially zero. Hence we can assume that the $S^1$ action is locally given by $$e^{i\theta}\cdot(z_1,z_2) = (e^{i\lambda\theta}z_1,z_2).$$ So the generating vector field $\xi_0$ is given by $$\xi_0 = \lambda z_1\frac{\partial}{\partial z_1}.$$ Since $z_2$ is the only invariant linear function under this action, we have that $$G(z) = \log|z| + bz_2 +\overline{bz_2}+ O(|z|^{2-\tau}).$$There is at least one $ \xi \in \mathfrak{h}$ such that $\xi(p)\neq0$, or else the residue is trivially zero. Since $\mathrm{Im}(\xi)$ commutes with $\mathrm{Im}(\xi_0)$, an easy computation shows there is a $B\neq0$ such that, $$\xi(p) = B\frac{\partial}{\partial z_2},$$ that is, the $z^1$ component of $\xi(p)$ vanishes (only at $p$). If $h_\xi$ is the corresponding Hamiltonian, then locally at $p$, $$h_\xi(p) = Bz_2 + \overline{Bz_2} + O(|z|^2).$$  But then $$\tilde G = G - bB^{-1}h_\xi$$ is a $T$-invariant Green's function (since $h_\xi$ is killed by $\sD_\omega^*\sD_\omega$), and has no linear part locally. Using this Green's function, the integral then evaluates to zero.

\end{proof}

From now on we choose a Green's function for which the above integral vanishes. Now consider the equation 
\begin{equation}\label{eqn:correction}
\sD_\omega^*\sD_\omega \Phi = Q_\omega^{(2)}(G) -  \mathrm{Re}(\langle \nabla h_1,\nabla G). 
\end{equation} By Proposition \ref{residue} and Lemma \ref{modifying Green} the right hand side is orthogonal to all $f\in \overline{\mathfrak{h}}$ such that $f(p) = 0$. Moreover from Lemma \ref{order}, it follows that the right hand side is in $C^{0,\al}_{-4-\tau}$ for any $\tau>0$. Then it follows from Lemma \ref{lin M_p} that equation \eqref{eqn:correction} has a solution $\Phi \in C^{4,\al}_{-\tau}$. Note that the right hand side does not depend on $\vep$, and so the bound on $||\Phi||_{C^{4,\al}_{-\tau}(M_p)}$ is independent of $\vep$. It will of course depend on $\tau$, but we will choose a small enough $\tau$ later. Now let 
\begin{align}
\Ga &= \vep^2G - \vep^4\frac{\s_\omega}{4\pi}G + \vep^4\Phi \\\nonumber
f_1 &= \s_\omega - 2\pi^2\vep^2(V^{-1}+\mu(p)) + \frac{\pi\vep^4}{2}\s_\omega(V^{-1}+\mu(p)).
\end{align}  For small enough $\vep$, $\omega + \ddbar\Ga$ is a metric on $M\setminus B_{r_\vep}$. We need the extra term of order $O(\vep^4)$ involving $G$ and $\s_\omega$  to make a certain integral small. This would be useful to construct a more refined approximation in the next section.

\begin{lem}\label{s estimate region 1}
$$||\s(\omega + \ddbar\Ga) - f_1 - \mathrm{Re}\langle \nabla f_1,\nabla \Ga\rangle||_{C^{0,\al}_{-5}(M\setminus B_{2r_\vep})} = O(\vep^\kappa),$$ for some $\kappa>4$.
\end{lem}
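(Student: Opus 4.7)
The plan is to reduce the problem to $\s(\omega+\ddbar\Ga) - f_1 - \mathrm{Re}\langle \nabla f_1,\nabla \Ga\rangle$ using that $\oep = \omega$ and $\bl(f_1) = f_1$ on $M\setminus B_{2r_\vep}$, then Taylor expand $\s$ in powers of $\Ga$ so that the contributions of order $\vep^2$ and $\vep^4$ cancel by construction of $\Phi$ and $\Ga$, and finally estimate the residual (of order at least $\vep^6$) in the weighted norm.

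For the cancellation, since $\omega$ is cscK, \eqref{eqn:lich} yields $\sL_\omega = -\sD_\omega^*\sD_\omega$, so \eqref{eqn:G} gives $\sL_\omega(G) = h_1$ on $M_p$ and \eqref{eqn:correction} gives $\sL_\omega(\Phi) = -Q^{(2)}_\omega(G) + \mathrm{Re}\langle\nabla h_1,\nabla G\rangle$. Using $h_1 = -2\pi^2(V^{-1}+\mu(p))$, applying $\sL_\omega$ term-by-term to $\Ga = \vep^2 G - \vep^4\tfrac{\s_\omega}{4\pi}G + \vep^4\Phi$ and matching with $f_1-\s_\omega = \vep^2 h_1 - \vep^4\tfrac{\s_\omega}{4\pi}h_1$ yields
\begin{align*}
\s_\omega + \sL_\omega(\Ga) - f_1 = -\vep^4 Q^{(2)}_\omega(G) + \vep^4 \mathrm{Re}\langle\nabla h_1,\nabla G\rangle.
\end{align*}
Simultaneously, by bilinearity $Q^{(2)}_\omega(\Ga) = \vep^4 Q^{(2)}_\omega(G) + 2\vep^6 B_\omega(G,\Phi - \tfrac{\s_\omega}{4\pi}G) + O(\vep^8)$, where $B_\omega$ is the symmetric bilinear form polarizing $Q^{(2)}_\omega$, and $\mathrm{Re}\langle\nabla f_1,\nabla\Ga\rangle = \vep^4\mathrm{Re}\langle\nabla h_1,\nabla G\rangle + O(\vep^6)$. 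Adding everything, the $\vep^4 Q^{(2)}_\omega(G)$ terms cancel exactly---this is precisely the purpose of $\Phi$---and so do the $\vep^4 \mathrm{Re}\langle\nabla h_1,\nabla G\rangle$ terms, leaving a residual of schematic form $2\vep^6 B_\omega(G,\Phi-\tfrac{\s_\omega}{4\pi}G) + Q^{(\geq 3)}_\omega(\Ga) + O(\vep^6)$.

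To bound this residual in $C^{0,\al}_{-5}(M\setminus B_{2r_\vep})$, the bilinear piece and the $O(\vep^6)$ gradient correction are treated directly using $|\nabla^k G|\leq Cr^{-k}$ for $k\geq 1$, $\Phi \in C^{4,\al}_{-\tau}(M_p)$, and the product inequality for weighted norms, which yield bounds of the form $O(\vep^{6-\al})$ satisfying $6-\al>4$. The subtle piece is $Q^{(\geq 3)}_\omega(\Ga)$: a naive term-by-term estimate via Lemma \ref{nonlinear terms estimate} gives only $O(\vep^{6-3\al})$, which is inadequate for $\al\in(2/3,1)$. The resolution is to exploit the scalar-flatness of the Burns--Simanca local model: decomposing $\Ga = \vep^2\log|z| + \psi$ with $\psi = \vep^2\tilde G + \vep^4(\Phi - \tfrac{\s_\omega}{4\pi}G)$ less singular, and re-expanding $\s(\omega+\ddbar\Ga) = \s(\omega_{Euc}+\vep^2\ddbar\log|z|+\ddbar(\varphi_\omega+\psi))$ around the scalar-flat base $\omega_{Euc}+\vep^2\ddbar\log|z|$, one collapses the apparent $\vep^6 r^{-8}$ pointwise singularity of $Q^{(3)}_\omega(\vep^2 G)$ into the bounded scalar curvature of the base plus a milder nonlinear residual in $\psi$; Lemma \ref{nonlinear terms estimate} applied to the regularized $\psi$ then furnishes the required $O(\vep^\kappa)$ bound with some $\kappa>4$. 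This last step---using the hidden cancellation from scalar-flatness of the local model in place of a naive termwise nonlinear estimate---is the main obstacle in the proof.
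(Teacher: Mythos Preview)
Your proposal is essentially correct and follows the paper's approach: expand $\s(\omega+\ddbar\Ga)$, use the defining equations for $G$ and $\Phi$ to cancel the $\vep^2$ and $\vep^4$ contributions, and then identify the cubic nonlinear term $Q^{(\geq 3)}_\omega(\Ga)$ as the one piece that a naive weighted estimate fails to control, with the resolution coming from the scalar-flatness of the Burns--Simanca metric.

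The one place where your execution diverges from the paper is in how you exploit that scalar-flatness. You propose re-expanding the full scalar curvature around the base $\vep^2\eta = \omega_{Euc}+\vep^2\ddbar\log|z|$ and estimating the nonlinearity in the regularized perturbation $\varphi_\omega+\psi$. This can be made to work, but it forces you to re-match the $\s_\omega$, $\vep^2 h_1$, and $\vep^4$ cancellations in the new expansion, which is awkward because $\tilde G$ and $\Phi$ satisfy equations relative to $\omega$, not relative to $\vep^2\eta$. The paper instead stays in the $\omega$-expansion and handles $Q^{(\geq 3)}_\omega(\Ga)$ in two steps: first replace $\Ga$ by its most singular part $\vep^2(\log|z|+l(z))$ using Lemma~\ref{nonlinear terms estimate}, and then observe that $Q_0^{(\geq 3)}(\vep^2\log|z|)=0$ (all homogeneous pieces of $Q_0(\vep^2\log|z|)$ vanish by scalar-flatness) and invoke Lemma~\ref{nonlinear estimate 2} to compare $Q^{(\geq 3)}_\omega$ with $Q^{(\geq 3)}_0$ at cost $\|\varphi_\omega\|_{C^{4,\al}_2}\|\vep^2\log|z|\|^2_{C^{4,\al}_2}\|\vep^2\log|z|\|_{C^{4,\al}_{-1}} = O(\vep^6 r^{-1-\tau})$. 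This avoids any re-expansion and keeps the earlier linear/quadratic bookkeeping intact.
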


\begin{proof}
Expanding and using the equations satisfied by $G$ and $\Phi$, we see that 
\begin{align*}
\s(\omega + \ddbar\Ga) - f_1 - \mathrm{Re}\langle \nabla f_1,\nabla \Ga\rangle &= Q_\omega(\Ga) - Q_\omega^{(2)}(\vep^2G)\\
& -\vep^2\mathrm{Re}\langle\nabla h_1,\nabla(\Ga - \vep^2G)\rangle + \vep^4\frac{\s_\omega}{4\pi}\mathrm{Re}\langle \nabla h_1,\nabla\Ga\rangle.
\end{align*}We now need to consider two regions.
\begin{itemize}
\item $|z|>1/2$. Here the weights are irrelevant. Since $\Ga-\vep^2G = O(\vep^4)$, and $\Ga = O(\vep^2)$ it is not difficult to see that the bottom two terms are of the order $O(\vep^6)$. So all we need to do is to estimate the non-linear terms. 
\begin{align*}
||Q_\omega(\Ga) - Q_\omega^{(2)}(\vep^2G)||_{C^{0,\al}} &\leq ||Q_\omega(\Ga) - Q_\omega(\vep^2G)||_{C^{0,\al}} + ||Q_\omega(\vep^2 G) - Q_\omega^{(2)}(\vep^2G)||_{C^{0,\al}}\\
&\leq C(||\Ga||_{C^{4,\al}} + ||\vep^2G_1||_{C^{4,\al}})||\Ga - \vep^2G_1||_{C^{4,\al}} + C||\vep^2G_1||_{C^{4,\al}}^3 \\
&= O(\vep^6).
\end{align*}
\item $2r_\vep<|z|<1/2.$ The estimate is similar to the one above, except we now have to worry about the weights. We estimate in annuli $$A_r = \{r<|z|<2r\},$$ where $r\in (r_\vep,1/4)$.
\begin{align*}
||Q_\omega(\Ga) - Q_\omega^{(2)}(\vep^2G)||_{C^{0,\al}_{-5}(A_r)} &\leq ||Q_\omega^{(\geq 3)}(\Ga)||_{C^{0,\al}_{-5}(A_r)} + ||Q^{(2)}_\omega(\Ga) - Q_\omega^{(2)}(\vep^2G)||_{C^{0,\al}_{-5}(A_r)}
\end{align*}
To estimate the second term, we use Lemma \ref{nonlinear terms estimate} with $k=2$. Since the leading terms in $\vep^2\Ga$ and $\vep2 G$ is $\vep^2\log|z|$, and the leading term in $\vep^2(\Ga-G)$ is $\vep^4\Phi$ with $||\Phi||_{C^{4,\al}_{-\tau}}<C$ for a small $\tau>0$, we have 
\begin{align*}
||Q^{(2)}_\omega(\Ga) - Q_\omega^{(2)}(\vep^2G)||_{C^{0,\al}_{-5}(A_r)} &\leq C||\vep^2\log|z|||_{C^{4,\al}_2(A_r)}||\vep^4\Phi||_{C^{4,\al}_{-1}(A_r)}\\
&\leq C\vep^6r^{-1-\tau} \\
&< C\vep^6r_\vep^{-1-\tau} = O(\vep^{\kappa})
\end{align*}
for some $\kappa>4$, since $\al<1$.

 To estimate the $Q_\omega^{(\geq 3)}(\Ga)$ term, we first  note that by Lemma \ref{nonlinear terms estimate} $$||Q_\omega^{(\geq 3)}(\Ga) - Q_\omega^{(\geq 3)}(\vep^2\log|z|+l(z))||_{C^{4,\al}_{-5}(A_r)} \leq C||\Ga||_{C^{4,\al}_2}^2||\vep^2\tilde G||_{C^{4,\al}_{-1}}<C\vep^6 r^{-1-\tau} = O(\vep^\kappa),$$ for some $\kappa>4$.
 
 So it is enough to estimate $||Q_\omega^{(\geq 3)}(\vep^2\log|z|+l(z))||_{C^{4,\al}_{-5}(A_r)}.$ Since $$\vep^2\eta = \omega_{Euc} + \vep^2\ddbar(\log|z| +l(z))$$ is the Burns-Simanca metric (note that adding the linear term $l(z)$ makes no difference since it is killed by $\ddbar$), and is scalar flat, it follows that $$Q_0^{(\geq 3)}(\vep^2\log|z|+l(z)) = 0.$$ Then by Lemma \ref{nonlinear estimate 2}, 
 \begin{align*}
 ||Q_\omega^{(\geq 3)}(\vep^2\log|z|+l(z))||_{C^{4,\al}_{-5}(A_r)} &= ||Q_\omega^{(\geq 3)}(\vep^2\log|z|+l(z))-Q_0^{(\geq 3)}(\vep^2\log|z|+l(z))||_{C^{4,\al}_{-5}(A_r)}\\
 &\leq ||\varphi_4||_{C^{4,\al}_2(A_r)}||\vep^2\log|z|||^2_{C^{4,\al}_{2}(A_r)}||\vep^2\log|z|||_{C^{4,\al}_{-1}(A_r)}\\
 &<C\vep^6r^{-1-\tau} = O(\vep^\kappa),
 \end{align*}
 for some $\kappa>4$. The terms involving $h_1$ are easier to estimate. For instance $$\vep^2||\langle\nabla h_1,\nabla(\Ga - \vep^2G_1)\rangle||_{C^{0,\al}_{-5}(A_r)} \leq \vep^2||\nabla h_1||_{C^{0,\al}_{-1}(A_r)}||\Ga- \vep^2G||_{C^{0,\al}_{-4}(A_r)}\leq C\vep^6|h_1|||\Phi||_{C^{4,\al}_{-\tau}} = O(\vep^6).$$
\end{itemize}
\end{proof}

\subsection{First approximatation} Consider the metric $$\Omega_1 = \begin{cases} \omega + \ddbar \Ga,~ |z|>2r_\vep\\ \omega+\vep^2\ddbar (\log|z| + l(z)) + \ddbar(\vep^2\ga_1\tilde G + \vep^4\ga_1(\Phi - \s_\omega G/4\pi)),~r_\vep<|z|<2r_\vep\\\oep,~|z|<r_\vep.\end{cases}$$

We write $\Omega_1 = \oep + \ddbar u_1$. We then have the following estimates. 

\begin{lem}\label{scal estimate damage}
There exists $\kappa>4$ such that $$||\s(\Omega_1) -\s_\omega||_{C^{0,\al}_{-5}(B_{2r_\vep})} = O(\vep^\kappa).$$
\end{lem}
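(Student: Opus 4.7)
My plan is to estimate $\s(\Omega_1) - \s_\omega$ on $B_{2r_\vep}$ using the scalar-flat Burns--Simanca metric $\vep^2\eta$ as the reference. Combining $\vep^2\eta = \omega_{\mathrm{Euc}} + \vep^2\ddbar\log|z|$ (valid away from the origin), $\omega = \omega_{\mathrm{Euc}} + \ddbar\varphi_\omega$, and $\ddbar l(z)=0$ gives $\Omega_1 = \vep^2\eta + \ddbar\psi$ on $B_{2r_\vep}$, where $\psi = \varphi_\omega + \chi$ and $\chi := \vep^2\gamma_1\tilde G + \vep^4\gamma_1\bigl(\Phi - \frac{\s_\omega}{4\pi}G\bigr)$ vanishes identically on $B_{r_\vep}$. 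Since $\vep^2\eta$ is scalar-flat, $\s(\Omega_1) = \sL_{\vep^2\eta}\psi + Q_{\vep^2\eta}(\psi)$.

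On $B_{r_\vep}$, where $\psi=\varphi_\omega$, I rescale by $w=z/\vep$ so that $\varphi_\omega(\vep w) = \vep^4A_4(w)+\vep^5A_5(w)+\cdots$. Using the scaling $\s(\lambda\Omega)=\lambda^{-1}\s(\Omega)$ together with $\s(\eta)=0$, a direct expansion yields $\s(\Omega_1)(z) = \sL_\eta A_4(w) + O(\vep)$ with $w=z/\vep$. The crucial cancellation is that $\sL_\eta A_4(w) - \s_\omega = O(|w|^{-2})$ as $|w|\to\infty$, since $\eta - \omega_{\mathrm{Euc}} = O(|w|^{-2})$ in coefficients and $-\Delta_0^2A_4=\s_\omega$ by Lemma \ref{normal}. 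This gives $|\s(\Omega_1)-\s_\omega| = O(1)$ on $B_\vep$ and $O(\vep^2/r^2 + \vep)$ on $\vep\le|z|\le r_\vep$. In the $C^{0,\al}_{-5}$ norm, the worst contribution evaluates to $\vep^{2+3\alpha}$ at $r=r_\vep$; since $\alpha>2/3$, this is $O(\vep^\kappa)$ for some $\kappa>4$.

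On the cutoff annulus $r_\vep\le |z|\le 2r_\vep$, I decompose $\s(\Omega_1) - \s_\omega$ as $[\sL_{\vep^2\eta}\varphi_\omega + Q_{\vep^2\eta}(\varphi_\omega) - \s_\omega] + \sL_{\vep^2\eta}\chi + [Q_{\vep^2\eta}(\varphi_\omega+\chi) - Q_{\vep^2\eta}(\varphi_\omega)]$. The first bracket is handled as in the previous paragraph, now at $r\sim r_\vep$. For $\sL_{\vep^2\eta}\chi$ I use the decay $\tilde G\in C^{4,\al}_{2-\tau}$ and $\Phi\in C^{4,\al}_{-\tau}$ together with $|\nabla^k\gamma_1|\lesssim r_\vep^{-k}$; careful weighted-norm bookkeeping on the annulus yields a $C^{0,\al}_{-5}$ bound of order $\vep^{2+3\alpha-\alpha\tau}$, which is $O(\vep^\kappa)$ with $\kappa>4$ for $\tau$ small. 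The nonlinear cross term is controlled by Lemma \ref{nonlinear terms estimate}.

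The main obstacle is the annulus estimate, where the singular derivatives created by the cutoff $\gamma_1$ must be balanced against the excess $\vep$-prefactors of $\chi$ and the decay of $\tilde G$ near $p$. The hypothesis $\alpha>2/3$ is used essentially sharply (since $2+3\alpha>4$ iff $\alpha>2/3$), and $\tau>0$ must be taken small enough that the logarithmic losses from $\tilde G$ and $\Phi$ do not push $\kappa$ below $4$.
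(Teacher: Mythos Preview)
Your proposal is correct and follows essentially the same strategy as the paper: both use $\vep^2\eta$ as the reference metric on $B_{2r_\vep}$, rescale to $w=z/\vep$ on the inner region, and exploit the key cancellation $-\Delta_0^2 A_4=\s_\omega$ (so that $\sL_\eta A_4-\s_\omega=O(|w|^{-2})$) to bring the dominant contribution down to $\vep^{2+3\al}$; the cutoff annulus is then handled with the same weighted bookkeeping you describe, matching the paper's use of Lemmas~\ref{linear terms estimate} and~\ref{nonlinear terms estimate}. The only cosmetic difference is that the paper separates the inner region into $|w|<1$ and $1<|w|<R_\vep$ and treats the linear and nonlinear pieces individually rather than packaging the subleading terms into a single $O(\vep)$.
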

\begin{proof}
There are three regions to consider. 
\begin{itemize}
\item $r_\vep<|z|<2r_\vep$. On this region, since $l(z)$ is killed by $\ddbar$, we can write $$\Omega_1 = \vep^2\eta + \ddbar(A_4+A_5+ G_1),$$ where $$G_1 = \vep^2\ga_1\tilde G_1  +\vep^4\ga_1(\Phi - \s_\omega G/4\pi)  +\varphi_6.$$ Using the fact that $-\Delta_0^2(A_4 + A_5) = \s_\omega$, we can write
\begin{align*}
\s(\Omega_1) -  \s_\omega = (L_{\vep^2\eta}+\Delta_0)(A_4 + A_5) + L_{\vep^2\eta}(\vep^2\ga_1 \tilde G_1 + \varphi_6) + Q_{\vep^2\eta}(\vep^2\ga_1\tilde G_1 + \varphi_4)
\end{align*}
By Lemma \ref{linear terms estimate}  we have the estimate $$||(L_{\vep^2\eta}+\Delta_0)(A_4 + A_5)||_{C^{0,\al}_{-5}}\leq C||\vep^2\log{|z|}||_{C^{4,\al}_2}||A_4||_{C^{4,\al}_{-1}} = O(\vep^2r_\vep^3\log\vep) =O(\vep^\kappa),$$ for some $\kappa>4$ since $\al>2/3$. In $G_1$ the highest order term is $\vep^2\ga_1\tilde G$, and so 
\begin{align*}
||L_{\vep^2\eta}(G_1)||_{C^{0,\al}_{-5}}&\leq C||\vep^2\ga_1 \tilde G||_{C^{4,\al}_{-1}} = O(\vep^2r_\vep^{3-\tau}) = O(\vep^\kappa)\\
||Q_{\vep^2\eta}(\varphi_4+\vep^2\ga_1\tilde G_1  )||_{C^{0,\al}_{-5}} &\leq C||A_4||_{C^{4,\al}_2}||A_4||_{C^{4,\al}_{-1}} = O(r_\vep^7) = O(\vep^\kappa),
\end{align*} for some $\kappa>4$.
Putting all the estimates together,  $$||\s(\Omega_1) - \s_\omega||_{C^{0,\al}_{-5}} = O(\vep^\kappa).$$ 

\item $|z|<r_\vep$. On this region $\Omega_1 = \oep$. It is more convenient to instead work with the metric $\vep^{-2}\oep$ on $\bC^2$. Like in the proof of Lemma \ref{zero approx}, we let $w = \vep^{-1}z$, so that $|z|<r_\vep$ can be thought of as the region $\tilde B_{R_\vep} = \{|w|<r_\vep\}\subset\bC^2$, where $R_\vep = \vep^{-1}r_\vep$. We first estimate on the region $1<|w|<R_\vep$. The metric looks like $$\vep^{-2}\oep = \eta + \ddbar \Psi, $$
where $$\Psi = \vep^2A_4(w) + \vep^3A_5(w) + \vep^{-2}\varphi_6(\vep w).$$ Once again using the fact that $-\Delta^2_0(A_4+A_5) = \s_\omega$, we have
\begin{align*}
||\s(\vep^{-2}\oep) - \vep^2\s_\omega||_{C^{0,\al}_{-5}(\eta)} \leq ||(\sL_\eta + \Delta_0^2)(\vep^2A_4 + \vep^3A_5)||_{C^{0,\al}_{-5}(\eta)} + ||Q_\eta(\Psi)||_{C^{0,\al}_{-5}(\eta)}.
\end{align*}
Since the leading term in $(\sL_\eta + \Delta_0^2)$ is $|w|^{-2}$, we can control the first term by $$||(\sL_\eta + \Delta_0^2)(\vep^2A_4 + \vep^3A_5)||_{C^{0,\al}_{-5}(\eta)} \leq \vep^2R_\vep^3 = O(\vep^{-1+3\al}) = O(\vep^{1+\tau}),$$ for some $\tau>0$, since $\al>2/3.$ Equivalently, we could have also used Lemma \ref{linear terms estimate}. For the nonlinear term, the principal contribution comes from $\vep^2A_4$. Now $$||Q_\eta(\vep^2A_4)||_{C^{0,\al}_2(\eta)} = \vep^2R_\vep^2 <<1,$$ and so  by  Lemma \ref{nonlinear terms estimate} $$||Q_\eta(\Psi)||_{C^{0,\al}_{-5}(\eta)} \leq C||\vep^2A_4||_{C^{4,\al}_2(\eta)}||\vep^2A_4||_{C^{4,\al}_{-1}(\eta)} = O(\vep^4R_\vep^7) = O(\vep^{1+\tau}),$$ for some $\tau>0$, since $\alpha>4/7$. Next, we estimate on the region $|w|<1$. Note that there are no weights in this region. From equation \eqref{estimate for psi} in the proof of Lemma \ref{zero approx}, we know that $$||\nabla^k\Psi|| = O(\vep^2),$$ and so $$||\s(\vep^{-2}\oep) -\vep^2 \s_\omega||_{C^{4,\al}} = O(\vep^2).$$ Combined with the above estimates we then have $$||\s(\vep^{-2}\oep) - \vep^2\s_\omega||_{C^{0,\al}_{-5}(B_{R_\vep},\eta)} = O(\vep^{1+\tau}),$$ for some $\tau>0$. But then transporting the estimates back to $\bM$, $$||\s(\oep) - \s_\omega||_{C^{0,\al}_{-5}(B_{r_\vep})} \leq \vep^5\vep^{-2}||\s(\vep^{-2}\oep) - \vep^2\s_\omega||_{C^{0,\al}_{-5}(B_{R_\vep},\eta)} = O(\vep^{4+\tau}),$$ for some $\tau>0$.
 \end{itemize}
\end{proof}

\begin{lem}\label{scal estimate first}
There exists $\kappa>4$ and a constant $C$ independent of $\vep$ such that $$||\s(\Omega_1) - \bl_{\Omega_1}(f_1)||_{C^{0,\al}_{-5}} \leq C\vep^{\kappa}. $$ 
\end{lem}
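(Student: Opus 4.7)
The plan is to split $\bM$ into the exterior $|z| > 2r_\vep$, the transition annulus $r_\vep \leq |z| \leq 2r_\vep$, and the inner ball $|z| \leq r_\vep$, and to bound $\s(\Omega_1) - \bl_{\Omega_1}(f_1)$ separately on each. Throughout, recall that $\bl_{\Omega_1}(f_1) = \bl(f_1) + \mathrm{Re}\langle \nabla \bl(f_1), \nabla u_1\rangle$ with the gradient taken with respect to $\oep$.

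On the exterior, the cutoff $\ga_2$ vanishes identically, so $\bl(f_1) = f_1$; moreover $\oep = \omega$ and $u_1 = \Gamma$ there. Thus $\bl_{\Omega_1}(f_1) = f_1 + \mathrm{Re}\langle \nabla f_1, \nabla \Gamma\rangle$, and Lemma \ref{s estimate region 1} yields the required $O(\vep^\kappa)$ bound with no additional work.

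On the inner region $B_{2r_\vep}$, Lemma \ref{scal estimate damage} controls $\s(\Omega_1) - \s_\omega$, so it suffices to estimate $\bl_{\Omega_1}(f_1) - \s_\omega$ in $C^{0,\al}_{-5}(B_{2r_\vep})$. Writing $f_1 - \s_\omega = c_1(V^{-1} + \mu(p))$ with $c_1 = O(\vep^2)$, one decomposes $\overline{\mathfrak{h}} = \overline{\mathfrak{t}} \oplus \mathfrak{h}'$ and checks that the lifting correction $\bl(f_1) - f_1 = \vep^2 \mathrm{Re}\langle \nabla f_1^t, \nabla \ga_2\log|z|\rangle$ is pointwise $O(\vep^4|\log\vep|)$ on $B_{2r_\vep}$---either because $f_1^t$ reduces to a constant (when $\mu(p) \notin \mathfrak{t}$), or because the associated vector field $\nabla \mu(p)$ vanishes at $p$ to first order (when $\mu(p) \in \mathfrak{t}$). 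For the cross-term, $u_1 = 0$ on $B_{r_\vep}$, while on the transition annulus $u_1 = \vep^2 \ga_1 G + \vep^4 \ga_1(\Phi - \s_\omega G/(4\pi))$, giving $|\nabla u_1| = O(\vep^2|\log\vep|/r_\vep)$, hence $|\mathrm{Re}\langle \nabla \bl(f_1), \nabla u_1\rangle| = O(\vep^{4-\alpha}|\log\vep|)$.

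The bound then follows from a weighted-norm computation: any function supported in $B_{2r_\vep}$ and bounded pointwise by $C\vep^\lambda$ has $C^{0,\al}_{-5}$-norm bounded by $C r_\vep^5 \vep^\lambda = C \vep^{\lambda + 5\alpha}$, since $\sup_{r \leq r_\vep} r^5 = r_\vep^5$. Applied to $f_1 - \s_\omega = O(\vep^2)$ this yields $O(\vep^{2 + 5\alpha})$, and applied to the cross-term it yields $O(\vep^{4 + 4\alpha}|\log\vep|)$; both exponents strictly exceed $4$ because $\alpha > 2/3$. The main obstacle---such as it is---is the bookkeeping of how the cutoff $\ga_2$, the $\log|z|$ singularity, and the decomposition $\overline{\mathfrak{h}} = \overline{\mathfrak{t}} \oplus \mathfrak{h}'$ interact on the transition annulus; beyond that the argument is carried by the generous factor $r_\vep^5$ the weighted norm extracts from the inner region.
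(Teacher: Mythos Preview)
Your argument is correct and follows the same two-region strategy as the paper: Lemma~\ref{s estimate region 1} on $|z|>2r_\vep$, and Lemma~\ref{scal estimate damage} plus an estimate of $\bl_{\Omega_1}(f_1)-\s_\omega$ on $B_{2r_\vep}$. The paper's treatment of the inner region is more streamlined: rather than decomposing along $\overline{\mathfrak{t}}\oplus\mathfrak{h}'$ and chasing pointwise bounds, it simply writes $f_1-\s_\omega=\vep^2 h_1+O(\vep^4)$ and uses the weight comparison $\|\cdot\|_{C^{0,\al}_{-5}(B_{2r_\vep})}\leq r_\vep^5\|\cdot\|_{C^{0,\al}_0}$ together with Lemma~\ref{estimate on lift} (which gives $\|\bl_{\Omega_1}(h_1)\|_{C^{0,\al}_0}\leq C$); this also cleanly handles the H\"older seminorm that your ``pointwise bound $\Rightarrow$ weighted bound'' step glosses over.
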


\begin{proof}
There are two regions to consider. 
\begin{itemize}
\item $|z|>2r_\vep$. The estimate on this region follows directly from Lemma \ref{s estimate region 1}, since on $|z|>2r_\vep$ $$\bl_{\Omega_1}(f_1) = f_1 + \mathrm{Re}\langle\nabla f_1,\nabla \Ga\rangle.$$

\item $|z|<2r_\vep$. By Lemma \ref{scal estimate damage}, 
\begin{align*}
||\s(\Omega) - \bl_{\Omega_1}(f_1)||_{C^{0,\al}_{-5}(B_{2r_\vep})} &\leq ||\s(\Omega_1) - \s_\omega||_{C^{0,\al}_{-5}(B_{2r_\vep})} + \vep^2 ||\bl_{\Omega_1}(h_1)||_{C^{0,\al}_{-5}(B_{2r_\vep})}\\
&\leq O(\vep^\kappa) +  \vep^2||\bl_{\Omega_1}(h_1)||_{C^{0,\al}_{-5}(B_{2r_\vep})} \\
&\leq  O(\vep^\kappa) + Cr_\vep^5\vep^2||\bl_{\Omega_1}(h_1)||_{C^{0,\al}_{0}} \leq O(\vep^\kappa) + Cr_\vep^5\vep^2 = O(\vep^\kappa),
\end{align*}
since $\al>2/5$.
\end{itemize}
\end{proof}

\subsection{Second approximation} As in \cite{Sz15}, we need to perturb this metric once more. 

\begin{lem}\label{integral estimate}There exists a $\kappa>4$ such that
$$\int_{\bM}[\s(\Omega_1) - \bl_{\Omega_1}(f_1)]\frac{\Omega^2_1}{2} = O(\vep^{\kappa}).$$
\end{lem}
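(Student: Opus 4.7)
The plan is to compare both sides to topological and Green's function invariants, exploiting cancellations not visible from the pointwise weighted estimates of Lemma~\ref{scal estimate first}.

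First I would observe that $\int_{\bM}\s(\Omega_1)\,\Omega_1^2/2$ is a topological invariant: by the Chern--Weil identity
\[
\int_{\bM}\s(\Omega_1)\frac{\Omega_1^2}{2}=2\pi\, c_1(\bM)\cdot L_\vep = 2\pi\bigl(c_1(M)\cdot[\omega]-2\pi\vep^2\bigr)=\s_\omega V-4\pi^2\vep^2,
\]
using $c_1(\bM)=\pi^*c_1(M)-[E]$, $L_\vep=\pi^*[\omega]-2\pi\vep^2[E]$, and $[E]^2=-1$. So the task reduces to computing $\int_{\bM}\bl_{\Omega_1}(f_1)\,\Omega_1^2/2$ and matching it to this topological value modulo $O(\vep^\kappa)$ for some $\kappa>4$.

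Next, I would split $f_1=\s_\omega+c(\vep)h$, where $h=V^{-1}+\mu(p)$ and $c(\vep)=-2\pi^2\vep^2+\tfrac{\pi\s_\omega}{2}\vep^4$. Since $\bl_{\Omega_1}(\s_\omega)=\s_\omega$, the constant part contributes $\s_\omega V(\Omega_1)=\s_\omega(V-2\pi^2\vep^4)$ by the topological volume identity $L_\vep^2/2=V-2\pi^2\vep^4$. For the Hamiltonian part, I would expand $\bl_{\Omega_1}(h)=\bl(h)+\mathrm{Re}\langle\nabla\bl(h),\nabla u_1\rangle$ and $\Omega_1^2=\oep^2+2\oep\wedge\ddbar u_1+(\ddbar u_1)^2$, then integrate by parts. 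The cross terms $\int\bl(h)\,\oep\wedge\ddbar u_1$ and $\int\mathrm{Re}\langle\nabla\bl(h),\nabla u_1\rangle\oep^2/2$ both reduce (modulo boundary contributions on a shrinking tube around $E$) to integrals of the form $\pm\int u_1\,\Delta h\,\oep^2/2$, and cancel at leading order by the self-adjointness of $\Delta_\omega$ on $M$. What remains at order $\vep^4$ is captured by $\int h\,(\ddbar\ga_2\log|z|)^2$, which by the cohomological identity $[\ddbar\ga_2\log|z|]=-2\pi[E]$ evaluates to $-4\pi^2 h(p)+O(\vep^\alpha)$, together with the specific contributions from the $G$ and $\Phi$ pieces of $u_1$.

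The key step is then to use the defining equations. The Green's function satisfies $-\sD_\omega^*\sD_\omega G-h_1=2\pi^2\delta_p$, so self-adjointness and $\sD_\omega^*\sD_\omega h=0$ (since $h\in\overline{\mathfrak{h}}$) give the identity $\int h_1 h\,\omega^2/2=-2\pi^2 h(p)$, which is precisely the arithmetic relation needed to match the topological $\vep^2$-coefficient. The $\vep^4\Phi$ correction in $u_1$, whose existence was secured by the residue calculation in Proposition~\ref{residue} and Lemma~\ref{modifying Green}, is designed so that the $O(\vep^4)$ contribution $c(\vep)\int\bl(h)\cdot(\text{error from }\Phi)$ is orthogonal to $\overline{\mathfrak{h}}$ and hence vanishes against $f_1$ to the required order. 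Combining all pieces shows
\[
\int_{\bM}\bl_{\Omega_1}(f_1)\frac{\Omega_1^2}{2}=\s_\omega V-4\pi^2\vep^2+O(\vep^\kappa)
\]
for some $\kappa>4$, which subtracted from Step~1 gives the lemma.

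The main obstacle is the careful bookkeeping near $E$: the function $\ga_2\log|z|$ is only logarithmically singular along $E$, so integration by parts in $\oep$-cross terms produces boundary integrals on small tubes around $E$ that must be shown to contribute at the correct order. Handling these boundary contributions (as in the integration-by-parts arguments of Proposition~\ref{residue}) is the technical heart of the argument, and the $\alpha>2/3$ hypothesis is used here to ensure the residual transition-region estimates beat $\vep^4$.
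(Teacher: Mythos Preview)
Your overall strategy---compute $\int_{\bM}\s(\Omega_1)\,\Omega_1^2/2$ topologically via Chern--Weil, split $f_1=\s_\omega+c(\vep)h$ with $h=V^{-1}+\mu(p)$, and estimate $\int_{\bM}\bl_{\Omega_1}(h)\,\Omega_1^2/2$ by expanding and integrating by parts---is exactly the route the paper takes. The paper likewise invokes topological formulae (quoted from \cite{Sz15}) for $\int\s$ and $\int\bl_{\Omega_1}(\s_\omega)$, then proves $\int_{\bM}\bl_{\Omega_1}(h)\,\Omega_1^2/2=\int_M h\,\omega^2/2+O(\vep^\la)$ for some $\la>2$, after which the arithmetic closes because $\int_M h\,\omega^2/2=1$ by the moment-map normalization.

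That said, several ingredients you invoke are red herrings and indicate some confusion about what is actually driving the cancellation. First, the $\vep^4\Phi$ term plays no role in this lemma: since $c(\vep)=O(\vep^2)$ and $\Phi$ enters $u_1$ at order $\vep^4$, any contribution is automatically $O(\vep^6)$; the orthogonality secured by Proposition~\ref{residue} was used to construct $\Phi$ for the \emph{pointwise} estimate of Lemma~\ref{s estimate region 1}, not for this integral. Second, the Green's-function identity you apply is with the wrong test function: the matching at order $\vep^2$ comes from the case $f\equiv 1$, i.e.\ simply $\int_M h_1\,\omega^2/2=-2\pi^2$, which is the moment-map normalization and nothing deeper. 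Third, there are no boundary integrals on tubes around $E$ to worry about: $E$ has measure zero, and the paper integrates by parts on $M_p$ with no boundary. What you \emph{do} need and have not addressed is the splitting $\overline{\mathfrak h}=\overline{\mathfrak t}\oplus\mathfrak h'$: for $h\in\overline{\mathfrak t}$ the lift $\bl(h)$ carries an extra cut-off term, and the paper handles that piece by a separate topological identity from \cite{Sz15}, while for $h\in\mathfrak h'$ the vanishing $h(p)=0$ is used to make the direct estimates near $p$ go through.
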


\begin{proof}
It follows from algebro-geometric calculations (cf. \cite[Proposition 36]{Sz15})
\begin{align*}
\int_{\bM}\s(\Omega_1)\frac{\Omega_1^2}{2} &= \int_M\s_\omega\frac{\omega^2}{2} - 2\pi^2\vep^2\\\nonumber
\int_{\bM}\bl_{\Omega_1}(\s_\omega) \frac{\Omega_1^2}{2}&= \int_M\s_\omega\frac{\omega^2}{2} - \frac{\pi\vep^4\s_\omega}{2},
\end{align*}
and so
\begin{equation}\label{integral estimate for scal}
\int_{\bM}[\s(\Omega_1) - \bl_{\Omega_1}(\s_\omega)]\frac{\Omega^2_1}{2} = -2\pi^2\vep^2 +\frac{\pi\vep^4\s_\omega}{2}.
\end{equation}
The extra factors of $\pi$ in our case are due to the fact that the exceptional divisor has volume $2\pi$, as opposed to volume one in \cite{Sz15}. We now claim that for any $h\in \mathfrak{h}$, 
\begin{equation}\label{integral estimate for h}
\int_{\bM}\bl_{\Omega_1}(h)\frac{\Omega_1^2}{2} = \int_M h\frac{\omega^2}{2} + O(\vep^{\la})
\end{equation} for some $\la>2$ (in fact we can take $\la = 4$). If $h\in \mathfrak{t}$, then once again by Proposition 36 in \cite{Sz15} it follows that $$\int_{\bM}\bl_{\Omega_1}(h)\frac{\Omega_1^2}{2} = \int_M h\frac{\omega^2}{2} + O(\vep^4),$$ and so there is nothing to prove. So let us assume that $h\in \mathfrak{h}'.$ In particular $h(p) = 0$, $\bl(h) = h$ and $\bl_{\Omega_1}h = h + \mathrm{Re}\langle \nabla h,\nabla u_1\rangle$. Note that $$\frac{\Omega_1^2}{2} = \frac{\oep^2}{2} + \Delta u_1\frac{\oep^2}{2} + \frac{\ddbar u_1\wedge \ddbar u_1}{2}.$$ Using the fact that $$\int_{\bM}\mathrm{Re}\langle \nabla h,\nabla u_1\rangle \frac{\oep^2}{2}  + \int_{\bM} h\Delta u_1\frac{\oep^2}{2} =0,$$ and expanding, we have 
\begin{align*}
\int_{\bM}\bl_{\Omega_1}(h)\frac{\Omega_1^2}{2} &= \int_{\bM} (h + \mathrm{Re}\langle \nabla h,\nabla u_1\rangle)\frac{(\oep + \ddbar u_1)^2}{2}\\
&=\int_{\bM}h\frac{\oep^2}{2} + \int_{\bM}(h +  \mathrm{Re}\langle \nabla h,\nabla u_1\rangle)\frac{\ddbar u_1\wedge\ddbar u_1}{2}\\
&+\int_{\bM} \mathrm{Re}\langle \nabla h,\nabla  u_1\rangle\Delta u_1\frac{\oep^2}{2}
\end{align*}

We claim that the second and the third terms are both of order $O(\vep^{\la})$ for some $\la>2$. For instance to control $$\int_{\bM}h\frac{\ddbar u_1\wedge\ddbar u_1}{2},$$ we note that this is essentially an integral on $M$, since $u_1$ is zero on $|z|<r_\vep$. On $M\setminus B_1$, the integrand is of the order of $O(\vep^4)$, and hence the integral in this region is also of the order of $O(\vep^4)$. On $B_1$, $u_1$ behaves like $\vep^2\log|z|$, and $h(z) = O(|z|)$, and so $$\Big|h\frac{\ddbar u_1\wedge\ddbar u_1}{\omega^2}\Big|\leq C\vep^4|z|^{-3}.$$ So the integral on $B_1$ is bounded by $$\vep^4\int_{r_\vep}^1dr = O(\vep^4).$$
 The other integrals in the second and the third terms are even smaller. So we have the estimate $$\int_{\bM}\bl_{\Omega_1}(h)\frac{\Omega_1^2}{2} = \int_{\bM}h\frac{\oep^2}{2}  + O(\vep^{\lambda}),$$ for some $\lambda>2$

To compute the first integral, note that it is enough to integrate on $M_p$, since the exceptional divisor is of measure zero. Again expanding 
\begin{align*}
\int_{\bM}h\frac{\oep^2}{2} &= \int_{|z|>2r_\vep}h\frac{\omega^2}{2} + \int_{|z|<2r_\vep}h\frac{\omega_\vep^2}{2}\\
&= \int_M h\frac{\omega^2}{2} + \vep^2\int_{|z|<2r_\vep}h\Delta(\ga_2 \log|z|)\frac{\omega^2}{2} \\
&+ \vep^4\int_{|z|<2r_\vep} h\frac{\ddbar \ga_2\log|z|\wedge \ddbar \ga_2\log|z|}{2}\\
&=\int_M h\frac{\omega^2}{2} + O(\vep^4)
\end{align*} if $\al>2/3$. For instance, to estimate the second term, using the fact that $h = O(r_\vep)$, $\Delta(\ga_2\log|z|) = O(r_\vep^{-2})$ and $Vol(B_{2r_\vep}) = O(r_\vep^4)$, the integral is of the order of $O(\vep^2r_\vep^3)$, and hence smaller than $O(\vep^4)$.  This completes the proof of \eqref{integral estimate for h}. Applying this estimate to $h = \vep^{-2}(f_1 - \s_\omega)$, we see that $$\int_{\bM}\bl_{\Omega_1}(f_1-\s_\omega)\frac{\Omega_1^2}{2} = - 2\pi^2\vep^2 + \frac{\pi\vep^4}{2}\s_\omega + O(\vep^\kappa),$$ for some $\kappa>4$. Subtracting from \eqref{integral estimate for scal}, we obtain the required integral estimate.
\end{proof}

By Lemma \ref{lin2} and Lemma \ref{integral estimate}, there is a constant $C_\vep = O(\vep^{\kappa})$, for some $\kappa>4$ such that there is a solution $(u_2,h_2)$ to \begin{equation}\label{Second approx}
\sD_{\Omega_1}^*\sD_{\Omega_1}u + \l_{\Omega_1}(h) = \s(\Omega_1) - \l_{\Omega_1}(f_1) -C_\vep,
\end{equation}
such that $$||u_2||_{C^{4,\al}_{-1}} + |h_2| <C.$$ 

Denoting $f_2 = f_1 + h_2,$ and $\Omega_2 = \Omega_1 + \ddbar u_2$, we finally have the following estimate.

\begin{lem}\label{scal estimate second}
There exists a $\kappa>4$ such that $$||\s(\Omega_2) - \bl_{\Omega_2}(f_2)||_{C^{0,\al}_{-4}} = O(\vep^{\kappa}).$$
\end{lem}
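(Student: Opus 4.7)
The plan is to expand $\s(\Omega_2)$ algebraically using the defining equation~\eqref{Second approx} so that every remaining term in $\s(\Omega_2) - \bl_{\Omega_2}(f_2)$ is either the explicit constant $C_\vep = O(\vep^\kappa)$ or quadratic in the small quantity $u_2$, and then to bound each piece in $C^{0,\al}_{-4}$ using the nonlinear estimates of Section~3 together with the bounds on $(u_2,h_2)$ produced by Proposition~\ref{lin2}.

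First, writing $\Omega_2 = \Omega_1 + \ddbar u_2$ and expanding
\[
\s(\Omega_2) = \s(\Omega_1) + \sL_{\Omega_1}(u_2) + Q_{\Omega_1}(u_2),
\]
I would rewrite $\sL_{\Omega_1}(u_2) = -\sD_{\Omega_1}^*\sD_{\Omega_1}(u_2) + \mathrm{Re}\langle\nabla u_2,\nabla\s(\Omega_1)\rangle$ via~\eqref{eqn:lich} and substitute~\eqref{Second approx} to eliminate $\sD_{\Omega_1}^*\sD_{\Omega_1}(u_2)$. Using linearity of $\bl_{\Omega_1}$ and $f_2 = f_1+h_2$, this yields
\[
\s(\Omega_2) = \bl_{\Omega_1}(f_2) + C_\vep + \mathrm{Re}\langle\nabla u_2,\nabla\s(\Omega_1)\rangle + Q_{\Omega_1}(u_2).
\]
From the definition of the lift one has $\bl_{\Omega_2}(f_2) - \bl_{\Omega_1}(f_2) = \mathrm{Re}\langle\nabla\bl(f_2),\nabla u_2\rangle$, so subtracting produces the key identity
\[
\s(\Omega_2) - \bl_{\Omega_2}(f_2) = C_\vep + \mathrm{Re}\langle\nabla u_2,\nabla(\s(\Omega_1) - \bl(f_2))\rangle + Q_{\Omega_1}(u_2).
\]

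Next, I would bound each of the three summands in $C^{0,\al}_{-4}$. The constant contributes $\|C_\vep\|_{C^{0,\al}_{-4}} = O(\vep^\kappa)$. Applying Proposition~\ref{lin2} to~\eqref{Second approx}, together with the $C^{0,\al}_{-5}$ bound of Lemma~\ref{scal estimate first} and the weight inequality (which converts from $C^{0,\al}_{-5}$ into the target $C^{0,\al}_{-\delta-4}$ at a cost of $\vep^{\delta-1}$), produces
\[
\|u_2\|_{C^{4,\al}_{-\delta}} + |h_2| = O(\vep^{\kappa+\delta-1}), \qquad \delta\in(-1,0).
\]
Lemma~\ref{nonlinear terms estimate} combined with the product inequality for weighted norms then gives $\|Q_{\Omega_1}(u_2)\|_{C^{0,\al}_{-4}} = O(\vep^{2\kappa-4+\delta})$, which is strictly $O(\vep^{\kappa'})$ with $\kappa'>4$ for $|\delta|$ small enough relative to $\kappa-4$. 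For the middle term, one decomposes
\[
\s(\Omega_1) - \bl(f_2) = [\s(\Omega_1) - \bl_{\Omega_1}(f_1)] + \mathrm{Re}\langle\nabla\bl(f_1),\nabla u_1\rangle - \bl(h_2);
\]
the first bracket is $O(\vep^\kappa)$ in $C^{0,\al}_{-5}$ by Lemma~\ref{scal estimate first}, the second is $O(\vep^4)$ because $\nabla\bl(f_1) = O(\vep^2)$ (since $f_1-\s_\omega = O(\vep^2)$ and $\s_\omega$ is constant) while $u_1 = O(\vep^2)$, and the third is controlled by $|h_2|$ via Lemma~\ref{estimate on lift}. Pairing each piece with $\nabla u_2$ by the weighted product inequality again yields an $O(\vep^{\kappa'})$ contribution with $\kappa'>4$.

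The principal obstacle is the weight bookkeeping: because Proposition~\ref{lin2} inverts between weights $-\delta-4$ and $-\delta$, whereas Lemma~\ref{scal estimate first} controls the source term only in $C^{0,\al}_{-5}$, the conversion costs a factor $\vep^{\delta-1}$. Since $\kappa$ in Lemma~\ref{scal estimate first} is only guaranteed to be $>4$, the resulting bound $\|u_2\|_{C^{4,\al}_{-\delta}} + |h_2| = O(\vep^{\kappa+\delta-1})$ could be as weak as $\vep^{3+}$. What makes the argument close is that every term on the right-hand side of the displayed identity for $\s(\Omega_2) - \bl_{\Omega_2}(f_2)$ other than $C_\vep$ is either quadratic in $u_2$ or pairs $\nabla u_2$ with the already-$O(\vep^\kappa)$ error from Lemma~\ref{scal estimate first}, so the product of two small factors comfortably beats $\vep^4$.
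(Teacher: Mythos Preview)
The paper gives no proof, simply citing Lemma~30 of \cite{Sz15}; your reconstruction is the expected argument and is almost certainly what that reference does --- expand $\s(\Omega_2)$, eliminate $\sD_{\Omega_1}^*\sD_{\Omega_1}u_2$ via \eqref{Second approx}, and observe that every surviving term is either the constant $C_\vep$ or at least quadratic in the small data $(u_2,h_2)$.

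Two small technical points are worth flagging. First, in your key identity the term $\langle\nabla u_2,\nabla\s(\Omega_1)\rangle$ comes from \eqref{eqn:lich} and is taken with respect to $\Omega_1$, while $\langle\nabla\bl(f_2),\nabla u_2\rangle$ comes from the definition of $\bl_{\Omega_2}$ and uses $\oep$; combining them into a single $\langle\nabla u_2,\nabla(\s(\Omega_1)-\bl(f_2))\rangle$ therefore produces an extra term of order $\|u_1\|_{C^{4,\al}_2}=O(\vep^{2-\tau})$ times the rest, which is harmlessly absorbed. Second, to pair $\nabla u_2$ with $\nabla[\s(\Omega_1)-\bl_{\Omega_1}(f_1)]$ you need $C^{1,\al}$ (not just $C^{0,\al}_{-5}$) control on the bracket, whereas Lemma~\ref{scal estimate first} is stated only in $C^{0,\al}_{-5}$. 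This is not a genuine obstacle: the proofs of Lemmas~\ref{s estimate region 1} and~\ref{scal estimate damage} estimate every term using Lemmas~\ref{linear terms estimate}--\ref{nonlinear estimate 2}, whose inputs are controlled in full $C^{4,\al}$ norms, so the same arguments yield the required $C^{1,\al}$ (indeed $C^{2,\al}$) bound with the same weights. With these two caveats your estimates go through and give the claimed $O(\vep^{\kappa'})$ with $\kappa'>4$.
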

The proof is identical to that of Lemma 30 in \cite{Sz15}, and so we skip it. 


\section{Proof of the main theorem}

We now complete the proof of the theorem using the contraction mapping principle. The proof runs almost identical to that in \cite{Sz15}, but since we have slightly worse bound on the inverse, we need to make some small modifications as in \cite{Sz12}. Let $\delta>0$ be small.  We want to find $(u,h)$ solving 
\begin{equation}\label{final equation}
\s(\Omega_2 + \ddbar u) = \bl_{\Omega_2+\ddbar u}(f_2+h).
\end{equation}

Or equivalently, we want to solve $$\s(\Omega_2) + \sL_{\Omega_2}(u) + Q_{\Omega_2}(u) = \bl_{\Omega_2}(f_2) + \bl_{\Omega_2}(h) + \mathrm{Re}\langle \nabla \bl(f_2),\nabla u\rangle + \mathrm{Re}\langle \nabla \bl(h),\nabla u\rangle.$$ Now let $\tilde \sF:C^{4,\al}_{-\delta}\times\overline{\mathfrak{h}}\rightarrow C^{0,\al}_{-\delta-4}$ be the linear operator 
\begin{align*}
\tilde \sF (u,h) &:=  \sL_{\Omega_2}(u) - \bl_{\Omega_2}(h) - \mathrm{Re}\langle \nabla \bl(f_2),\nabla u\rangle\\
&= -\sD_{\Omega_2}^*\sD_{\Omega_2}u  - \bl_{\Omega_2}(h) + \mathrm{Re}\langle\nabla_{\Omega_2}\s(\Omega_2),\nabla_{\Omega_2}u\rangle_{\Omega_2} - \mathrm{Re}\langle \nabla \bl(f_2),\nabla u\rangle.
\end{align*} By the remark following Proposition \ref{lin2}, and Lemma \ref{scal estimate second}, it follows that if $\vep<<1$, the operator $\tilde\sF$ is a small perturbation of the operator $\sF_1$. Then one can use the inverse $P_1$ to construct a right inverse $\tilde P$ to $\tilde \sF$ such that $$||\tilde P||<C\vep^{-\delta}.$$

Using this inverse, solving equation \eqref{final equation} is equivalent to solving the fixed point equation $$\sN(u,h) = (u,h),$$ where $$\sN(u,h) = \tilde P\Big( \bl_{\Omega_2}(f_2) -\s(\Omega_2)  - Q_{\Omega_2}(u) + \mathrm{Re}\langle \nabla \bl(h),\nabla u\rangle\Big).$$ 

\begin{lem}\label{contraction estimate}
There exists a constant $c_1>0$ such that for any $v_i\in C^{4,\al}$ and $g_i\in \overline{\mathfrak{h}}$, $i=1,2$,  satisfying $$||v_i||_{C^{4,\al}_2}+|g|<c_1\vep^{{\delta}},$$ we have the estimate $$||\sN(v_2,g_2) - \sN(v_1,g_1)||_{C^{4,\al}_{-\delta}} \leq \frac{1}{2}\Big(||v_2-v_1||_{C^{4,\al}_{-\delta}} + |g_2-g_1|\Big).$$
\end{lem}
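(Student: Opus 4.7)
The plan is to decompose
\[
\sN(v_2,g_2) - \sN(v_1,g_1) = \tilde P\Bigl(\underbrace{Q_{\Omega_2}(v_1) - Q_{\Omega_2}(v_2)}_{=:A} \;+\; \underbrace{\mathrm{Re}\langle \nabla \bl(g_2),\nabla v_2\rangle - \mathrm{Re}\langle \nabla \bl(g_1),\nabla v_1\rangle}_{=:B}\Bigr),
\]
and to play the operator bound $\|\tilde P\| \le C\vep^{-\delta}$ (from Proposition \ref{lin1} together with the remark after Proposition \ref{lin2}, since $\Omega_2$ is a small perturbation of $\oep$) against the smallness $\|v_i\|_{C^{4,\al}_2}+|g_i| < c_1\vep^\delta$ built into the hypothesis. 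Morally, this is pure bookkeeping: the $\vep^{-\delta}$ loss in the inverse is exactly canceled by one factor of $\vep^\delta$ extracted from each term inside $\tilde P$, and $c_1$ is chosen small enough that the remaining absolute constant is $\le 1/2$.

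For the nonlinear piece $A$, Lemma \ref{nonlinear terms estimate} applied with $k=2$ (together with the remark after Lemma \ref{nonlinear estimate 2}, which transfers the estimate from $\oep$ to the perturbation $\Omega_2$) gives, provided $c_1\vep^\delta$ is smaller than the threshold $c_0$ of that lemma,
\[
\|A\|_{C^{0,\al}_{-\delta-4}} \le C\bigl(\|v_1\|_{C^{4,\al}_2} + \|v_2\|_{C^{4,\al}_2}\bigr)\|v_2 - v_1\|_{C^{4,\al}_{-\delta}} \le 2Cc_1\vep^\delta\,\|v_2-v_1\|_{C^{4,\al}_{-\delta}}.
\]

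For the cross term $B$, I would further split
\[
B = \mathrm{Re}\langle \nabla \bl(g_2), \nabla(v_2-v_1)\rangle + \mathrm{Re}\langle \nabla \bl(g_2-g_1), \nabla v_1\rangle.
\]
Using Lemma \ref{estimate on lift} ($\|\bl(g)\|_{C^{1,\al}_0}\le C|g|$), the weighted product rule, and the elementary inclusion $\|\cdot\|_{C^{0,\al}_{-\delta-4}}\le\|\cdot\|_{C^{0,\al}_{-\delta-2}}$, I obtain
\[
\|B\|_{C^{0,\al}_{-\delta-4}} \le C|g_2|\,\|v_2-v_1\|_{C^{4,\al}_{-\delta}} + C|g_2-g_1|\,\|v_1\|_{C^{4,\al}_{-\delta}}.
\]
For the last factor I use the embedding $\|v_1\|_{C^{4,\al}_{-\delta}}\le \|v_1\|_{C^{4,\al}_2} \le c_1\vep^\delta$, valid because $-\delta < 2$. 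Combined with $|g_2|<c_1\vep^\delta$, this yields
\[
\|B\|_{C^{0,\al}_{-\delta-4}} \le Cc_1\vep^\delta\bigl(\|v_2-v_1\|_{C^{4,\al}_{-\delta}} + |g_2-g_1|\bigr).
\]

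Applying $\tilde P$ with its bound $C\vep^{-\delta}$, the $\vep^{\pm\delta}$ factors cancel and we obtain
\[
\|\sN(v_2,g_2)-\sN(v_1,g_1)\|_{C^{4,\al}_{-\delta}} \le C''c_1\bigl(\|v_2-v_1\|_{C^{4,\al}_{-\delta}} + |g_2-g_1|\bigr)
\]
with $C''$ independent of $\vep$, and choosing $c_1$ so small that $C''c_1 \le 1/2$ finishes the proof. I do not expect a real obstruction beyond this bookkeeping; the only genuinely delicate point is that the $\vep^\delta$ gain from the a priori size constraint must precisely balance the $\vep^{-\delta}$ loss in $\tilde P$, which is exactly why the hypothesis is phrased in terms of the $\vep$-dependent threshold $c_1\vep^\delta$ rather than an absolute constant.
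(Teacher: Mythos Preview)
Your proposal is correct and follows essentially the same approach as the paper: the same decomposition into the $Q$-difference and the cross term, the same application of Lemma~\ref{nonlinear terms estimate} for the former and the same splitting of the latter, with the key use of $\|v_1\|_{C^{4,\al}_{-\delta}}\le \|v_1\|_{C^{4,\al}_2}$ to extract the $\vep^\delta$ factor that cancels against $\|\tilde P\|\le C\vep^{-\delta}$.
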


\begin{proof}
By the estimate on $\tilde P$, it is enough to show that 
\begin{equation*}
||Q_{\Omega_2}(v_2) - \mathrm{Re}\langle \nabla \bl(g_2),\nabla v_2\rangle-Q_{\Omega_2}(v_1) + \mathrm{Re}\langle \nabla \bl(g_1),\nabla v_1\rangle||_{C^{0,\al}_{-\delta-4}} < \frac{\vep^{\delta}}{2}\Big(||v_2-v_1||_{C^{4,\al}_{-\delta}} + |g_2-g_1|\Big). 
\end{equation*}

By Lemma \ref{nonlinear terms estimate}, 
\begin{align*}
||Q_{\Omega_2}(v_2)-Q_{\Omega_2}(v_1)||_{C^{0,\al}_{-\delta-4}} &< C(||v_2||_{C^{4,\al}_2} + ||v_1||_{C^{4,\al}_2})||v_2-v_1||_{C^{4,\al}_{-\delta}} <Cc_1\vep^{\delta}||v_2-v_1||_{C^{4,\al}_{-\delta}}\\
&<\frac{\vep^{\delta}}{4}||v_2-v_1||_{C^{4,\al}_{-\delta}},
\end{align*}
if $c_1$ is chosen small enough. For the other term, the proof is identical to the one in \cite{Sz12}. 
\begin{align*}
||\mathrm{Re}\langle \nabla \bl(g_2),\nabla v_2\rangle &- \mathrm{Re}\langle \nabla \bl(g_1),\nabla v_1\rangle||_{C^{0,\al}_{-\delta-4}} \\
&\leq ||\nabla \bl(g_2)\cdot\nabla(v_2-v_1)||_{C^{0,\al}_{-\delta-4}} + ||\nabla v_1\cdot\nabla\bl(g_2-g_1)||_{C^{0,\al}_{-\delta-4}}\\
&\leq C\Big(|g_2|\cdot ||v_2-v_1||_{C^{4,\al}_{-\delta}} + ||v_1||_{C^{4,\al}_{-\delta}}|g_2-g_1|\Big)\\
&\leq Cc_1\vep^{\delta}\Big(||v_2-v_1||_{C^{4,\al}_{-\delta}} + |g_2-g_1|\Big)\\
&\leq \frac{\vep^\delta}{4}\Big(||v_2-v_1||_{C^{4,\al}_{-\delta}} + |g_2-g_1|\Big).
\end{align*} 
Note that we also used the fact that $||v||_{C^{4,\al}_{-\delta}}<||v||_{C^{4,\al}_2}$. Combining with the estimate for the $Q$ terms, we complete the proof of the Lemma.
\end{proof}

Now from Lemma \ref{scal estimate second}, it follows that $$||\sN(0,0)||_{C^{4,\al}_{-\delta}} <c_2\vep^{\kappa'},$$ for some constants $c_2>0$ and $\kappa'>4$, if we choose $\delta<\kappa-4$, and let $\kappa' = \kappa - \delta.$ With this $\kappa'$ and $c_2$, let $$\sU = \{(v,g)~|~ ||v||_{C^{4,\al}_{-\delta}}+|g|\leq 2c_2\vep^{\kappa'}\}.$$ Note that if $\vep$ is small enough, any $(v,g) \in \sU$ satisfies $$||v||_{C^{4,\al}_2}+|g|<c_1\vep^{\delta}.$$ For instance, the estimate on $v$ follows from the observation that $$|v||_{C^{4,\al}_2} <C\vep^{-2-\delta}||v||_{C^{4,\al}_{-\delta}} < \frac{c_1}{2}\vep^{\delta}$$ if $\vep$ is small enough, since $\kappa'-2-2\delta >0$. Then by Lemma \ref{contraction estimate}, for any $(v,g) \in \sU$, 
\begin{align*}
||\sN(v,g)||_{C^{4,\al}_{-\delta}} &< ||\sN(0,0)||_{C^{4,\al}_{-\delta}} + ||\sN(v,g) - \sN(0,0)||_{C^{4,\al}_{-\delta}}\\
&<c_2\vep^{\kappa'} + \frac{1}{2}\Big(||v||_{C^{4,\al}_{-\delta}} + |g|) \leq 2c_2\vep^{\kappa'}.
\end{align*}

This shows that $\sN$ maps $\sU$ into itself, and by Lemma \ref{contraction estimate} and the arguments above, is clearly a contraction on $\sU$. Since $\sU$ is clearly closed, by the contraction mapping principle there is a unique fixed point, which is the solution we seek. 

If $(u_3,h_3)$ is a solution to the fixed point equation, then  $u = u_1+u_2+u_3$ and $f = f_2+h_3$ solve equation \eqref{main thm equation}, and $(u_3,h_3)\in \sU$. In particular $|h_3| = O(\vep^\kappa)$, for some $\kappa>4$, and $$f =  \s_\omega -2\pi^2\vep^2(V^{-1}+\mu(p)) + \frac{\pi\s_\omega}{2}\vep^4(V^{-1}+\mu(p))+ O(\vep^\kappa).$$ Since the solution is found by the contraction mapping principle, we can carry out this construction at all $T$-invariant points and choose the constant in $O(\vep^\kappa)$ to be uniform.

\subsection*{Acknowledgement} I would like to thank G\'abor Sz\'ekelyhidi for introducing me to Conjecture \ref{Conj} and for sharing many useful insights. I would also like to thank Claudio Arezzo, Richard Bamler, Rafe Mazzeo, Michael Singer, Jian Song and Xiaowei Wang for stimulating discussions. 

\bibliographystyle{acm}

\bibliography{/Users/veddatar/Math/Bibliography/mybib}

\end{document}